\newcolumntype{C}[1]{>{\centering\arraybackslash}p{#1}}
\newcounter{intege}
\newcommand{\polygon}[5]{ 
  \foreach \t in {1,...,#3} {
    \coordinate (#2\t) at ($#1+(90-\t*360/#3:#4)$);
  }
  \draw[thin,black,fill=white,opacity=0.3,densely dashed] #1 circle (#4);
  \setcounter{intege}{1}
  \pgfmathsetcounter{intege}{1}
  \foreach \object in {#5}{
    \ifthenelse{\not\equal{\object}{}}{
      \filldraw[black] ($#1+($(90-\theintege*360/#3:#4)$)$) circle(2pt);
    }{
      \filldraw[black] ($#1+($(90-\theintege*360/#3:#4)$)$) circle(2pt);
    }
    \node[inner sep=0pt] at ($#1+($1.12*(90-\theintege*360/#3:#4)$)$) {$\scriptstyle\object$};
    \pgfmathsetcounter{intege}{\theintege+1}
    \setcounter{intege}{\theintege}
  }
}
\definecolor{darkblue}{rgb}{0,0,0.7} 
\newcommand{\darkblue}{\color{darkblue}} 
\newcommand{\defn}[1]{\emph{\darkblue #1}} 
\theoremstyle{plain}
   \newtheorem{theorem}{Theorem}[section]
   \newtheorem{proposition}[theorem]{Proposition}
   \newtheorem{lemma}[theorem]{Lemma}
   \newtheorem{corollary}[theorem]{Corollary}
   \newtheorem{prop-def}[theorem]{Proposition-Definition}
\theoremstyle{definition}
   \newtheorem{definition}[theorem]{Definition}
   \newtheorem{example}[theorem]{Example}
   \newtheorem{remark}[theorem]{Remark}
\numberwithin{equation}{section}
\newcommand{\NC}[1]{\operatorname{NC}(W,#1)} 
\newcommand\CC{{\mathbb{C}}}
\newcommand\NN{{\mathbb{N}}}
\newcommand\RR{{\mathbb{R}}}
\newcommand\QQ{{\mathbb{Q}}}
\newcommand\Sym{{\operatorname{Sym}}}
\newcommand\Aut{{\operatorname{Aut}}}
\newcommand\Cox{{\operatorname{\mathcal{C}}}} 
\newcommand\tr{{\operatorname{tr}}}
\newcommand\GL{{\operatorname{GL}}}
\newcommand\Gal{{\operatorname{Gal}}} 
\newcommand\Sp{{\operatorname{Sp}}}
\newcommand{\one}{{1\!\!1}} 
\newcommand{\charpol}{{\chi}}
\begin{document}

\title[On non-conjugate Coxeter elements]{On non-conjugate Coxeter elements in\\ well-generated reflection groups}

\author[V.~Reiner]{Victor Reiner$^*$}
\address[V.~Reiner]{School of Mathematics\\University of Minnesota\\Minneapolis, MN 55455, USA}
\email{reiner@math.umn.edu}
\thanks{$^*$Supported by NSF grant DMS-1001933.}

\author[V.~Ripoll]{Vivien Ripoll$^\dagger$}
\address[V.~Ripoll]{Fakult\"at f\"ur Mathematik, Universit\"at Wien, Oskar-Morgenstern-Platz 1,
1090 Wien, Austria}
\email{vivien.ripoll@univie.ac.at}
\thanks{$^\dagger$Supported by the Austrian Science Foundation FWF, grants Z130-N13 and F50-N15, the latter in the framework of the Special Research Program ``Algorithmic and Enumerative Combinatorics''.}

\author[C.~Stump]{Christian Stump$^\ddagger$}
\address[C.~Stump]{Institut f\"ur Mathematik, Freie Universit\"at Berlin, Germany}
\email{christian.stump@fu-berlin.de}
\thanks{$^\ddagger$Supported by the German Research Foundation DFG, grant STU 563/2-1 ``Coxeter-Catalan combinatorics".}

\date{\today}

\begin{abstract}  
  Given an irreducible well-generated complex reflection group~$W$ with Coxeter number~$h$, we call a Coxeter element any regular element (in the sense of Springer) of order~$h$ in~$W$; this is a slight extension of the most common notion of Coxeter element.
  We show that the class of these Coxeter elements forms a single orbit in~$W$ under the action of reflection automorphisms.
  For Coxeter and Shephard groups, this implies that an element~$c$ is a Coxeter element if and only if there exists a simple system~$S$ of reflections such that~$c$ is the product of the generators in~$S$.
  We moreover deduce multiple further implications of this property.
  In particular, we obtain that all noncrossing partition lattices of~$W$ associated to different Coxeter elements are isomorphic.
  We also prove that there is a simply transitive action of the Galois group of the field of definition of~$W$ on the set of conjugacy classes of Coxeter elements.
  Finally, we extend several of these properties to Springer's regular elements of arbitrary order.
\end{abstract}

\maketitle

\setcounter{tocdepth}{1}
\tableofcontents

\section{Background and main results}
\label{sec:intro}

Let $V=\CC^n$, and consider a finite subgroup~$W$ of $\GL(V) \cong \GL_n(\CC)$.
One calls~$W$ a \defn{complex reflection group} if it is generated by its subset~$R$ of \defn{reflections}, that is, the elements~$r \in W$ for which the fixed space $\ker(r-\one) \subseteq V$ is a hyperplane.
Results of G.~C.~Shephard and J.~A.~Todd~\cite{ShephardTodd} and of C.~Chevalley~\cite{Chevalley} distinguish complex reflection groups as those finite subgroups of $\GL_n(\CC)$ for which the invariant subalgebra of the action on $\Sym(V^*)\cong\CC[x_1,\ldots,x_n] $ yields again a polynomial algebra, $\Sym(V^*)^W=\CC[f_1,\ldots,f_n]$.
While the basic invariants $f_1,\ldots,f_n$ are not unique, they can be chosen homogeneous, and then their degrees $d_1 \leq \cdots \leq d_n$ are uniquely determined and called the \defn{degrees} of~$W$.
The group~$W$ is called \defn{irreducible} if it does not preserve a proper subspace of~$V$.
An important subclass of irreducible complex reflection groups are those that are \defn{well-generated}, that is, for which there exists a subset of~$n$ reflections that generate~$W$.
In particular, this subclass contains all (complexifications of) irreducible \defn{real} reflection groups inside $\GL_n(\RR)$, and as well the subclass known as \defn{Shephard groups}, described in detail in Section~\ref{ssec:shephard}.

\bigskip

The results in this article mainly concern Coxeter elements in irreducible well-generated reflection groups, and are introduced below in Sections~\ref{ssec:def-cox},~\ref{ssec:charac-coxeter} and~\ref{ssec:galois-conj}.
The more general case of Springer's regular elements is then presented in Section~\ref{ssec:regular}.

\medskip

\subsection{Coxeter elements and noncrossing partition lattice}
\label{ssec:def-cox}
For an irreducible real reflection group~$W$, let~$\mathfrak{C}$ be a chamber of the arrangement of reflecting hyperplanes in~$\RR^n$.
To this chamber, one can associate a distinguished set~$S = \{s_1,\ldots,s_n\}\subseteq R$ of \defn{Coxeter generators} for~$W$ obtained by taking those reflections defined by the boundary hyperplanes of~$\mathfrak{C}$, see~\cite{Humphreys}.
The pair $(W,S)$ is a \defn{Coxeter system}, and every finite Coxeter system can be obtained this way from a finite real reflection group.
A \defn{Coxeter element} in~$W$ is classically defined as the product of the reflections in~$S$ in any order, see H.~S.~M.~Coxeter in \cite{Cox1951}.
It thus depends on the choice of the chamber~$\mathfrak{C}$ and on the order of the factors.
It is well known that, however, the set of such Coxeter elements forms a single conjugacy class in~$W$, and that the order of a Coxeter element is equal to the highest degree~$d_n$ (for basic results on Coxeter elements, we refer to~\cite[Ch.~3.16--3.19]{Humphreys} or \cite[Ch.~29]{Kan2001}).
Coxeter elements play an important role in the theory of (finite) Coxeter groups.
In particular, they are crucial in Coxeter-Catalan combinatorics, namely in the context of noncrossing partitions, cluster complexes, generalized associahedra, Cambrian fans and lattices, and subword complexes.
For details about these concepts, we refer to~\cite{Arm2006,Rea2007,PS2011} and the references therein.
Work of T.~Brady and C.~Watt~\cite{BradyWatt} and of D.~Bessis~\cite{Bessis1} on the braid group of~$W$ show the importance of the \defn{$W$-noncrossing partition lattice}~$\NC{c}$ associated to a Coxeter element~$c \in W$.
This poset $\NC{c}$ is defined as the principal order ideal generated by any Coxeter element~$c$, that is,
\begin{equation}
  \NC{c} = [\one,c]_W=\{w \in W \,|\, \one \leq_R w \leq_R c\}.
  \label{eq:ncp}
\end{equation}
Here, $\leq_R$ denotes the \defn{absolute order} on~$W$ given by
\begin{equation}
  x \leq_R y \iff \ell_R(x) + \ell_R(x^{-1}y) =\ell_R(y)
  \label{eq:abs}
\end{equation}
where~$\ell_R(x)$ is the \defn{absolute length} of~$x$ with respect to the set~$R$ of all reflections in~$W$, and~${\one \in W}$ denotes the identity element.

Further work of D.~Bessis~\cite{Bessis2} shows how to generalize the noncrossing partition lattice to all irreducible well-generated groups.
The definitions of absolute length and absolute order in~\eqref{eq:abs} still make sense when the group is not real, but one needs a replacement for the notion of Coxeter elements. 
This is provided by the following notion of regularity from T.~A.~Springer~\cite{Spr1974}.
An element~${w \in W}$ is called \defn{regular} if~$w$ has an eigenvector~$v$ in the complement of the reflecting hyperplanes, so that~$W$ acts freely on the orbit of~$v$.
Say that~$w$ is~$\zeta$-regular if $w(v)=\zeta v$ in this situation.
The multiplicative order~$d$ of~$w$ within~$W$ will be the same as that of~$\zeta$ within $\CC^\times$, and one calls~$d$ a \defn{regular number} for~$W$.
A simple characterization of regular numbers was obtained by G.~I.~Lehrer and T.~A.~Springer~\cite{LS1999}, and later proven uniformly by G.~I.~Lehrer and J.~Michel~\cite{LehrerMichel}.
This characterization implies that for irreducible well-generated groups, the \defn{Coxeter number} $h=d_n$ is always a regular number, and that it is the highest regular number possible.
It turns out that for a real reflection group, the class of usual Coxeter elements corresponds to the class of $e^{2i\pi/h}$-regular elements.
D.~Bessis thus replaced the Coxeter element~$c$ in a real reflection group with an $e^{2i\pi/h}$-regular element in an irreducible well-generated group, see~\cite[Definition~7.1]{Bessis2}.
In the present paper, we consider the following more general definition.

\begin{definition}
  \label{def:cox}
  Let~$W$ be an irreducible well-generated complex reflection group~$W$. 
  A \defn{Coxeter element} in~$W$ is a regular element in~$W$ of order $h=d_n$.
\end{definition}

As mentioned above, the usual definition is more restrictive than the one given here: a Coxeter element is classically taken to be regular for the specific eigenvalue $e^{2i\pi/h}$, and this notion is, for real reflection groups, equivalent to the definition using the product of the reflections through the walls of a chamber in the reflection arrangement. 
Both definitions (the classical one, and the extended one from Definition~\ref{def:cox}) have been used in the literature, but their subtle distinction has been sometimes source of confusion. 
For example, the statement of Theorem~C in~\cite[Ch.~32.2]{Kan2001} is erroneous.
Also, some results of~\cite{BessisR} used the extended definition, although they relied on results from~\cite{Bessis2} which used only the restrictive definition.
One of the purposes of this article is to clarify these confusions.
We will next see that the more general definition provides new insights even for real reflection groups.

\begin{example}
\label{ex:H2}
(cf.~\cite[Example 2.7]{Mil2014})
  An example (actually the smallest for which our generalization is nontrivial) to bear in mind is the real reflection group~$W$ of type~$H_2 = I_2(5)$ shown in Figure~\ref{fig:H2}.
  It is the dihedral group of order~$10$ consisting of all symmetries of a regular pentagon.
  The degrees of~$W$ are~$2$ and~$5$, and the Coxeter number is thus $h=5$.
  Set~$s$ and~$t$ to be the reflections through the boundary hyperplanes of a given chamber in the reflection arrangement, and let $c = st$ be a corresponding Coxeter element.
  It is given by a rotation of angle $2\pi/5$.
  Its eigenvalues are~$\zeta$ and $\zeta^4=\zeta^{-1}$, where~${\zeta:=e^{\frac{2i\pi}{5}}}$.
  Observe that $c^2$ is again regular of order~$h$. It is a rotation of angle $4\pi/5$ and has eigenvalues~$\zeta^2$ and~${\zeta^3=\zeta^{-2}}$.
  In particular,~$c$ and~$c^2$ are not conjugate in~$W$.
  In the classical notion of Coxeter elements,~$c$ and~$c^4$ are Coxeter elements (both can be written as products of reflections through the walls of a chamber of the reflection arrangement), while~$c^2$ and~$c^3$ are not.
  Observe though that $c^2 = sts \cdot t$, that the reflections $sts$ and $t$ also generate~$W$, and that $(W,\{sts,t\})$ is again a Coxeter system, isomorphic --- but not conjugate --- to the Coxeter system $(W,\{s,t\})$.
  So $c^2$ and $c^3$ are still products of the generators of a Coxeter system, but this Coxeter system $(W,\{sts,t\})$ does not come from the walls of a chamber, as in the usual Coxeter generators of a reflection group. 
  
    \begin{figure}[t]
    \begin{center}
      \begin{tikzpicture}[scale=1.2]
        \tikzstyle{hp}=[line width=1pt,opacity=0.3,black]
        \tikzstyle{hp2}=[line width=1.5pt,opacity=0.4,black]
        \tikzstyle{hp3}=[line width=1.5pt, opacity=0.4,black,dashed]
        \coordinate (X) at (0,0);
        \polygon{(X)}{obj}{5}{2.5}{,,,,};
        \fill[fill=black!8!white]
            (0,0) -- ($1.25*(obj1)$) arc (18:54:3.125);
        \draw[red,->]
            ($1.1*(obj1)$) arc (18:90:2.75);
        \node (c) at ($0.6*($(obj5)-(obj3)$)$) {\scriptsize $c$};

        \draw[red,->]
            ($1.2*(obj1)$) arc (18:162:3);
        \node (c2) at ($0.68*($(obj4)-(obj2)$)$) {\scriptsize $c^2$};

        \polygon{(X)}{obj}{5}{2.5}{,,,,};
        \draw[hp2] ($1.3*(obj1)$) to ($-1.3*(obj1)$);
        \draw[hp3] ($1.3*(obj1)$) to ($-1.3*(obj1)$);
        \draw[hp] ($1.3*(obj2)$) to ($-1.3*(obj2)$);
        \draw[hp2] ($1.3*(obj3)$) to ($-1.3*(obj3)$);
        \draw[hp] ($1.3*(obj4)$) to ($-1.3*(obj4)$);
        \draw[hp3] ($1.3*(obj5)$) to ($-1.3*(obj5)$);
        \draw[line width=1pt,black] (obj1) to (obj2) to (obj3) to (obj4) to (obj5) to (obj1);

        \filldraw[black] (0,0) circle(2pt);

        \node (e)  at ($0.35*($(obj1)-(obj3)$)$) {\scriptsize $\one$};

        \node (s) at ($0.35*($(obj5)-(obj3)$)$) {\scriptsize $s$};
        \node (t) at ($0.35*($(obj1)-(obj4)$)$) {\scriptsize $t$};

        \node (st) at ($0.35*($(obj5)-(obj2)$)$) {\scriptsize $st$};
        \node (ts) at ($0.35*($(obj2)-(obj4)$)$) {\scriptsize $ts$};

        \node (sts) at ($0.35*($(obj4)-(obj2)$)$) {\scriptsize $sts$};
        \node (tst) at ($0.35*($(obj2)-(obj5)$)$) {\scriptsize $tst$};

        \node (stst) at ($0.35*($(obj4)-(obj1)$)$) {\scriptsize $stst$};
        \node (tsts) at ($0.35*($(obj3)-(obj5)$)$) {\scriptsize $tsts$};

        \node (ststs) at ($0.35*($(obj3)-(obj1)$)$) {\scriptsize $ststs$};

        \node (Ht) at ($1.4*(obj1)$) {\scriptsize $H_t$};
        \node (Hs) at ($-1.4*(obj3)$) {\scriptsize $H_s$};
        \node (Hsts) at ($1.4*(obj5)$) {\scriptsize $H_{sts}$};
        \node (Htst) at ($-1.42*(obj4)$) {\scriptsize $H_{tst}$};
        \node (Hststs) at ($-1.4*(obj2)$) {\scriptsize $H_{ststs}$};
      \end{tikzpicture}
    \end{center}
    \caption{The symmetry group of the regular pentagon.}
    \label{fig:H2}
  \end{figure}
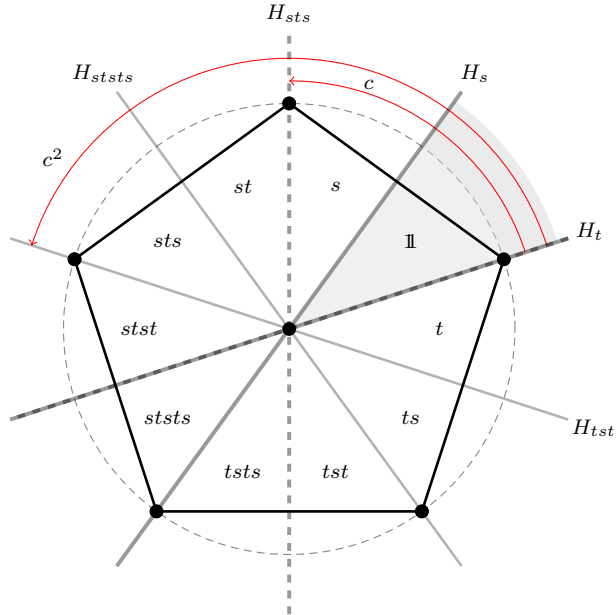
\end{example}

For an irreducible well-generated group~$W$ and a Coxeter element~$c$ in~$W$, we define the \defn{poset of~$W$-noncrossing partitions} $NC(W,c)$ as in~\eqref{eq:ncp}.
Since conjugation by elements of~$W$ preserves the set of reflections~$R$, it also preserves the absolute length~$\ell_R$ and respects the absolute order $\leq_R$.
Hence, whenever two Coxeter elements~$c$ and~$c'$ are~$W$-conjugate, the posets $\NC{c}$ and $\NC{c'}$ are isomorphic.
T.~A.~Springer showed in~\cite[Theorem 4.2]{Spr1974} that, for a fixed~$\zeta$ in $\CC$, the~$\zeta$-regular elements in~$W$ form a single~$W$-conjugacy class; this is recalled in Theorem~\ref{Springer-theory-facts}\eqref{eq:springer1} below.
It is known to experts that the poset structure on $\NC{c}$ does {\bf not} depend on the choice of the Coxeter element~$c$, even in the more general notion of Coxeter elements in which there is not necessarily a single conjugacy class.
But this seems to have only been mentioned so far in~\cite{CS2012}, where the poset isomorphisms were checked explicitly using a computer.
As a result of the considerations in this paper, we obtain a conceptual reason for the existence of a poset isomorphism between~$\NC{c}$ and~$\NC{c'}$ for two regular elements of order~$h$ that are {\bf not}~$W$-conjugate, see Corollary~\ref{cor:ncp} below.

\medskip

\subsection{Characterizations of Coxeter elements}
\label{ssec:charac-coxeter}
The first main result of this paper, Theorem~\ref{thm:main} below, provides several alternative characterizations of Coxeter elements.
We show that the set of Coxeter elements forms a single orbit in~$W$ for the action of \defn{reflection automorphisms}, that is, those automorphisms of~$W$ that preserve the set~$R$ of reflections. 
This allows one to characterize Coxeter elements using only $e^{2i\pi/h}$-regular elements and reflection automorphisms as described in characterization~\eqref{def4} below.
It also implies that Coxeter elements are exactly the products of generators of~$W$ for some well-behaved generating subsets of reflections; see characterization~\eqref{def6}, for which we need to explain beforehand the terminology.
We call such a well-behaved generating set of $W$ a \emph{regular generating set}: it is a minimal generating set of reflections having some additional properties as will be made precise in Section~\ref{ssec:pres}.
We will see in Proposition~\ref{prop:standardpresreg} that any irreducible well-generated group admits a regular generating set.
Note that most explicit presentations for~$W$ using diagrams ``\`a la Coxeter'' feature a regular generating set.
We say that two generating sets for~$W$ are \emph{isomorphic} if there exists a bijection between them which extends to an automorphism of~$W$.

Moreover we obtain the purely combinatorial characterization~\eqref{def5} for Coxeter elements in finite groups that admit a \emph{generalized Coxeter system} as defined in Section~\ref{ssec:shephard}.
We emphasize that the irreducible complex reflection groups admitting a generalized Coxeter system are exactly the real reflection groups and the Shephard groups, and that for a real reflection group, a generalized Coxeter system is a Coxeter system in the usual sense.

\begin{theorem}
  \label{thm:main}
  Let~$W$ be an irreducible well-generated complex reflection group with Coxeter number~$h$, and let~$c \in W$.
  The following statements are equivalent.
  \begin{enumerate}[(i)]
    \item~$c$ is a Coxeter element (i.e.,~$c$ is regular of order~$h$); \label{def1}
    \item $c=w^p$ for an $e^{2i\pi/h}$-regular element~$w$ and an integer~$p$ coprime to~$h$; \label{def3}
    \item~$c$ has an eigenvalue of order~$h$; \label{def2}
    \item $c=\psi(w)$ for an $e^{2i\pi/h}$-regular element~$w$ and a reflection automorphism~$\psi$. \label{def4}
  \end{enumerate}
  Fix a regular generating set~$S_0$ for~$W$.
  Then the following statement is as well equivalent.
  \begin{enumerate}[(i)]
    \setcounter{enumi}{4}
    \item There exists a generating set $S$ contained in $R$ and isomorphic to $S_0$, such that~$c$ is the product (in some order) of the elements in~$S$. \label{def6}
  \end{enumerate}
  If~$W$ admits a generalized Coxeter system, the following statement is as well equivalent.
  \begin{enumerate}[(i)]
    \setcounter{enumi}{5}
  \item There exists a subset~$S$ of the set~$R$ of reflections in~$W$ such that~$(W,S)$ is a generalized Coxeter system and~$c$ is the product (in some order) of the elements in~$S$. \label{def5}
  \end{enumerate}
\end{theorem}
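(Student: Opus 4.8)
The plan is to establish the core equivalences \eqref{def1} $\Leftrightarrow$ \eqref{def3} $\Leftrightarrow$ \eqref{def2} $\Leftrightarrow$ \eqref{def4} first, and then to read off the ``product of generators'' characterizations \eqref{def6} and \eqref{def5} from \eqref{def4}. For the first three I would rely on the facts recalled in Theorem~\ref{Springer-theory-facts}: for a fixed eigenvalue $\zeta$ the $\zeta$-regular elements form a single $W$-conjugacy class; a $\zeta$-regular element has multiplicative order equal to that of $\zeta$; and if $w$ is $\zeta$-regular with regular eigenvector $v$ then $w^{p}(v)=\zeta^{p}v$, so $w^{p}$ is $\zeta^{p}$-regular. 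The implication \eqref{def3} $\Rightarrow$ \eqref{def1} is then immediate, since for $\gcd(p,h)=1$ the element $c=w^{p}$ is $e^{2i\pi p/h}$-regular of order $h$; and \eqref{def1} $\Rightarrow$ \eqref{def2} holds because the regular eigenvalue of a regular element of order $h$ itself has order $h$. The decisive step is \eqref{def2} $\Rightarrow$ \eqref{def1}, for which I would use the eigenspace dimension $a(h)=\#\{\,i : h \mid d_{i}\,\}$: as $h=d_{n}$ is the largest degree, any degree divisible by $h$ equals $d_{n}$, which occurs with multiplicity one, so $a(h)=1$; since $a(h)$ is the maximal dimension of a $\zeta$-eigenspace over all of $W$ for $\zeta$ of order $h$, an element admitting such an eigenvalue attains this maximum, and Springer's eigenspace theory (using that $h$ is a regular number) then forces it to be $\zeta$-regular, hence regular of order $h$. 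Finally, for \eqref{def1} $\Rightarrow$ \eqref{def3} I would write the regular eigenvalue of $c$ as $e^{2i\pi k/h}$ with $\gcd(k,h)=1$; fixing an $e^{2i\pi/h}$-regular $w$, the power $w^{k}$ is $e^{2i\pi k/h}$-regular, so the single-class statement gives $c=(gwg^{-1})^{k}$, as required.

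To incorporate \eqref{def4} I would use that raising to a power coprime to $h$ is realized by a reflection automorphism. Choosing a Galois automorphism $\gamma$ of the field of definition of $W$ with $\gamma(e^{2i\pi/h})=e^{2i\pi p/h}$, the rationality of $W$ over this field makes $\gamma$ act as a reflection automorphism that sends an $e^{2i\pi/h}$-regular element to an $e^{2i\pi p/h}$-regular one; by the single-class statement this image is conjugate to $w^{p}$, and composing with the corresponding inner automorphism yields \eqref{def3} $\Rightarrow$ \eqref{def4}. For \eqref{def4} $\Rightarrow$ \eqref{def2} I would use that every reflection automorphism is, modulo inner automorphisms, a composite of such Galois twists and of automorphisms induced by the normalizer of $W$ in $\GL(V)$; each of these alters the characteristic polynomial of an element only by a Galois permutation of its roots, and since such a permutation preserves the order of a root of unity, $c=\psi(w)$ still has an eigenvalue of order $h$. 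This closes the cycle.

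It then remains to deduce \eqref{def6} and \eqref{def5}, for which I would invoke the property established in Section~\ref{ssec:pres} and Proposition~\ref{prop:standardpresreg} that the product, in any order, of a regular generating set --- and likewise of the generators of a generalized Coxeter system --- is an $e^{2i\pi/h}$-regular element. Granting \eqref{def4}, writing $c=\psi(w)$ with $w$ the product of the fixed regular generating set $S_{0}$ exhibits $c$ as the product of $S:=\psi(S_{0})\subseteq R$, a generating set isomorphic to $S_{0}$ through the reflection automorphism $\psi$, which gives \eqref{def6}; the same argument with a generalized Coxeter system in place of $S_{0}$ gives \eqref{def5}. For the converses, the isomorphism carrying $S_{0}$ (resp.\ the standard generalized Coxeter system) to $S$ extends --- because these sets meet every conjugacy class of reflections --- to a reflection automorphism carrying the Coxeter element $\prod S_{0}$ to $c=\prod S$, so that $c$ satisfies \eqref{def4} and hence \eqref{def1}.

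I expect the main obstacle to lie in \eqref{def4}, that is, in controlling the group of reflection automorphisms: one needs both that the field-of-definition Galois group acts through reflection automorphisms realizing every coprime power --- which rests on the rationality of $W$ over its field of definition --- and that every reflection automorphism preserves the order of eigenvalues --- which rests on describing this group as generated by inner, Galois, and normalizer-induced automorphisms. Securing that description, and with it the single-orbit statement, is the point on which all the remaining implications ultimately depend.
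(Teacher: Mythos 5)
Your overall architecture mirrors the paper's (core equivalences first, then \eqref{def6} and \eqref{def5} read off from \eqref{def4}), and your handling of \eqref{def1}$\Leftrightarrow$\eqref{def3}$\Leftrightarrow$\eqref{def2} is sound --- in fact your maximal-eigenspace argument for \eqref{def2}$\Rightarrow$\eqref{def1}, using $a(h)=\#\{i : h\mid d_i\}=1$ and the fact that maximal $\zeta$-eigenspaces contain regular vectors when $h$ is regular, is a legitimate substitute for the counting argument the paper merely cites. The genuine gap is at the heart of the matter, statement \eqref{def4}. You assert that ``the rationality of $W$ over its field of definition makes $\gamma$ act as a reflection automorphism.'' This is false as stated: entrywise Galois conjugation $\bar{\gamma}$ maps $W\subseteq\GL_n(K_W)$ to a subgroup $\bar{\gamma}(W)$ which in general is \emph{not} equal to $W$ (the paper stresses this, e.g.\ for $I_2(5)$, where no involutive non-inner automorphism exists). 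To manufacture an automorphism of $W$ from $\gamma$ one needs the nontrivial fact that $\bar{\gamma}(W)$ is conjugate to $W$ inside $\GL_n(\CC)$ --- this is Proposition~\ref{prop:observation}/Corollary~\ref{cor:galoisconj} (equivalently \cite[Theorem~8.32]{LehrerTaylor}), a classification-based statement, and it is precisely the input around which Section~\ref{sec:refl-autom} of the paper is built. A smaller but real defect in the same step: since $e^{2i\pi/h}$ need not lie in $K_W$ (e.g.\ $K_W=\QQ$ in type $A$), one cannot ``choose $\gamma\in\Gal(K_W/\QQ)$ with $\gamma(e^{2i\pi/h})=e^{2i\pi p/h}$''; one must instead extend $\zeta\mapsto\zeta^p$ from $\QQ(\zeta)$ to $K_W(\zeta)$ as in Corollary~\ref{Galois-corollary}. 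Finally, your converse \eqref{def4}$\Rightarrow$\eqref{def2} rests on describing $\Aut_R(W)$ as generated by inner, Galois and normalizer-induced automorphisms; that description is exactly Marin--Michel's theorem (Proposition~\ref{prop:marinmichel}, \cite[Theorem~1.2]{MM2010}). You rightly flag it as the crux, but it cannot be ``secured'' by the elementary considerations you sketch --- the paper imports it as a deep external result.

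There is a second, unflagged gap in your treatment of \eqref{def5}. For \eqref{def5}$\Rightarrow$\eqref{def1} you speak of ``the isomorphism carrying the standard generalized Coxeter system to $S$,'' but statement \eqref{def5} supplies no such isomorphism: it only says that $(W,S)$ is a generalized Coxeter system with $S\subseteq R$. The existence of an automorphism of $W$ carrying $S_0$ to $S$ is the rigidity result Proposition~\ref{prop:cox}, which the paper proves case-by-case (via Koster's classification, Lemma~\ref{lem:abstractisomorphism}), and which is known to \emph{fail} both for infinite Coxeter groups and without the hypothesis $S\subseteq R$ (e.g.\ $I_2(2m)\cong A_1\times I_2(m)$); so this step cannot be waved through. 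By contrast, your deduction of \eqref{def6} is essentially correct, since there the isomorphism is part of the hypothesis, and Property~\eqref{it:powerconj} of a regular generating set forces it to be a reflection automorphism, as in Lemma~\ref{lem:isom-reg}; note only that the product of $S_0$ in an arbitrary order is a Coxeter element rather than an $e^{2i\pi/h}$-regular element, so the conclusion should be routed through Proposition~\ref{prop:refl-autom} (closing the cycle via \eqref{def1}) rather than literally through \eqref{def4}.
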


The main consequence of Theorem~\ref{thm:main} is that characterization~\eqref{def4} allows one to transfer properties that are known for ``classical'' Coxeter elements to the more general notion of Coxeter elements.
It shows moreover that the two natural generalizations of the usual definition of Coxeter elements (one in the real setting, the other for the complex setting) coincide in the real case.
Property~\eqref{def5} can indeed be taken as a natural generalized definition of Coxeter elements in finite real reflection groups. 
We will see that this generalization brings new elements whenever the Coxeter group is noncrystallographic (see Remark~\ref{rk:weyl}).
Note that a slightly more restrictive version of this definition already appears in D.~Bessis' work on the dual braid monoid of Coxeter groups, see~\cite[Definition~1.3.2]{Bessis1}.
This generalized definition for Coxeter elements was also used for general finite or infinite Coxeter groups in~\cite{BDSW2014}. 
Table~\ref{table:cox} records the ``classical'' and the more general definition of Coxeter elements for an irreducible well-generated group, either real or complex.
\begin{table}[ht]
  \begin{tabular}{C{2.5cm}|C{5.5cm}|C{5.5cm}}
    & ``classical'' definition & general definition \\[5pt]
    \hline
    &&\\
    \vspace{2pt}$W$ real &
      product of the reflections through the walls of some chamber &
      product of elements of~$S$ for some Coxeter system $(W,S)$ with $S\subseteq R$ \\[5pt]
    \hline
    &&\\
   ~$W$ complex &
      $\displaystyle{e^{\frac{2i\pi}{h}}}$-regular &
      regular of order~$h$\\
    &&
  \end{tabular}
  \caption{Different notions of Coxeter elements in a reflection group~$W$}
  \label{table:cox}
\end{table}

The proof of Theorem~\ref{thm:main} will be derived as follows.
The assertions~\eqref{def1}$\Leftrightarrow$\eqref{def3} and~\eqref{def1}$\Rightarrow$\eqref{def2} are direct consequences of the definition of Springer's regularity.
The implication~\eqref{def2}$\Rightarrow$\eqref{def1} comes from a counting argument using Pianzola-Weiss' formula, and can be found in~\cite[Theorem~32-2C]{Kan2001} for the real case and in~\cite[Proposition~2.1]{CS2012} for the general case.
In this paper, we prove the three remaining characterizations~\eqref{def4},~\eqref{def6} and~\eqref{def5}.
The characterization~\eqref{def4} is a consequence of Proposition~\ref{prop:refl-autom} below, which we prove in Section~\ref{sec:refl-autom}.
This characterization is the heart of the present paper.
After showing that any irreducible well-generated group admits a regular generating set in Section~\ref{ssec:pres}, characterization~\eqref{def6} will follow easily from~\eqref{def4}. 
Characterization~\eqref{def5} for real reflection groups and Shephard groups is then proven in Section~\ref{ssec:charac-cox}.
The proof relies on characterization~\eqref{def6} and on a rigidity property of the generalized Coxeter presentations of these groups.

\medskip

The statement below is a reformulation of characterization~\eqref{def4} in Theorem~\ref{thm:main}. 
We denote by $\Aut_R(W)$ the group of reflection automorphisms of~$W$.

\begin{proposition}
  \label{prop:refl-autom}
  The action of $\Aut_R(W)$ on~$W$ preserves the set of Coxeter elements, and is transitive on it.
\end{proposition}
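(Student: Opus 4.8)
The plan is to prove the two assertions separately. For the first, that $\Aut_R(W)$ preserves the set of Coxeter elements, I would argue that Coxeter elements are characterized intrinsically by spectral data that any reflection automorphism must respect. Indeed, by Theorem~\ref{thm:main} a Coxeter element is precisely an element possessing an eigenvalue of order~$h$ (characterization~\eqref{def2}). A reflection automorphism~$\psi$ of~$W$ need not be realized by conjugation inside~$\GL(V)$, but the key point is that the eigenvalues of~$\psi(c)$ are obtained from those of~$c$ by a Galois action: since $\psi$ permutes the reflections, it induces a permutation of the reflecting hyperplanes and hence acts on the character data, and the \emph{multiset of eigenvalue orders} of an element is a reflection-automorphism invariant. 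Thus if $c$ has an eigenvalue of order~$h$, so does $\psi(c)$, and $\psi(c)$ is again a Coxeter element by~\eqref{def2}. This step needs to be made precise, but conceptually it reduces to the fact that the order of an element and its centralizer structure are preserved by any automorphism.

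For transitivity, I would use characterization~\eqref{def3}: every Coxeter element has the form $c = w^p$ for a fixed $e^{2i\pi/h}$-regular element~$w$ and some~$p$ coprime to~$h$. By Springer's theorem (Theorem~\ref{Springer-theory-facts}\eqref{eq:springer1}, recalled in the excerpt), all $e^{2i\pi/h}$-regular elements form a single $W$-conjugacy class, so it suffices to move between the powers $w^p$ for varying~$p$ coprime to~$h$ by reflection automorphisms. The strategy is therefore to produce, for each~$p$ coprime to~$h$, a reflection automorphism~$\psi_p$ with $\psi_p(w) = w^p$ up to conjugacy. The natural candidate comes from the Galois action: the powers $w \mapsto w^p$ correspond to the Galois conjugation sending $e^{2i\pi/h} \mapsto e^{2i\pi p/h}$ on the cyclotomic field, and the field of definition of~$W$ carries such a Galois action by field automorphisms that the authors connect to reflection automorphisms later in the paper (the simply transitive Galois action advertised in the abstract).

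The main obstacle will be establishing that these Galois-theoretic power maps are actually induced by \emph{reflection} automorphisms of~$W$, i.e.\ by automorphisms preserving~$R$, rather than merely by abstract group automorphisms. This is exactly the content that I expect to be deferred to the detailed construction in the body of the paper (the sections on reflection automorphisms and the field of definition). Concretely, one must show that the assignment $w \mapsto w^p$ extends from the cyclic regular element to a globally defined automorphism of~$W$ that sends reflections to reflections; the delicate part is the compatibility of the Galois/power operation with the reflection structure on all of~$W$, not just on the chosen regular element. Once such~$\psi_p$ are produced, transitivity follows immediately: any Coxeter element $c = w^p$ is obtained from the base element~$w$ by first conjugating (to reach the fixed representative of the regular class) and then applying~$\psi_p$, so the single orbit of~$w$ under $\Aut_R(W)$ exhausts all Coxeter elements.
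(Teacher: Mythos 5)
Your overall strategy --- realizing the power maps $w\mapsto w^p$ through Galois conjugation --- is the same route the paper takes, but both halves of your argument leave a gap at exactly the point where the real work lies. For preservation, your claim that the multiset of eigenvalue orders is a reflection-automorphism invariant is precisely what must be proved, and your proposed justification, that this ``reduces to the fact that the order of an element and its centralizer structure are preserved by any automorphism,'' is false: eigenvalues are data of the linear representation, not of the abstract group. Concretely, the outer automorphism of $S_6=W(A_5)$ (where $h=6$) preserves orders and centralizers, yet it sends the $6$-cycles (the Coxeter elements) to elements of cycle type $[3,2,1]$, whose eigenvalues on the reflection representation are $1,1,-1,\omega,\omega^2$ with $\omega$ of order $3$ --- so no eigenvalue of order $6$, and the image is not a Coxeter element. (That automorphism is of course not a reflection automorphism, since it swaps transpositions with triple transpositions; this is exactly why preserving $R$ matters.) The ingredient you are missing is Proposition~\ref{prop:marinmichel} (Marin--Michel): every reflection automorphism $\psi$ of $W$ is a Galois automorphism, i.e.\ $\psi(c)$ is $\GL_n(\CC)$-conjugate to $\bar{\gamma}(c)$ for some $\gamma\in\Gal(K_W/\QQ)$. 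Only with this in hand can one argue, after extending $\gamma$ to $K_W(\zeta)$ via Corollary~\ref{Galois-corollary}\eqref{it:extension1}, that the eigenvalues of $\psi(c)$ are Galois conjugates of those of $c$, so an eigenvalue $\zeta$ of order $h$ is carried to some $\zeta^p$ with $\gcd(p,h)=1$, and then Theorem~\ref{thm:main}\eqref{def2} applies.

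For transitivity, your reduction via Theorem~\ref{thm:main}\eqref{def3} and Springer's conjugacy theorem is fine, but you then explicitly defer the construction of the automorphisms $\psi_p$ (``the content I expect to be deferred to the detailed construction''), and that deferred step is the entire content of the proof. What fills it in is not formal: one extends the cyclotomic automorphism $\zeta\mapsto\zeta^p$ to $\delta\in\Gal(K_W(\zeta)/\QQ)$ (Corollary~\ref{Galois-corollary}\eqref{it:extension2}), but the twisted group $\bar{\delta}(W)$ is a priori a \emph{different} subgroup of $\GL_n(\CC)$, so $\bar{\delta}$ is not even a map from $W$ to itself. The paper resolves this with Corollary~\ref{cor:galoisconj}: $\bar{\delta}(W)$ has the same degrees, codegrees and (im)primitivity as $W$, hence by the classification-based Proposition~\ref{prop:observation} it equals $gWg^{-1}$ for some $g\in\GL_n(\CC)$; then $u\mapsto g^{-1}\bar{\delta}(u)g$ is a reflection automorphism of $W$ carrying $c$ to a $\zeta^p$-regular element, which by Theorem~\ref{Springer-theory-facts}\eqref{eq:springer1} is $W$-conjugate to the target $c'$, and composing with that inner (hence reflection) automorphism finishes. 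Without this conjugacy statement for the Galois-twisted group --- or some substitute for it --- your $\psi_p$ does not exist as stated, so the transitivity argument is incomplete.
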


In particular, for any two Coxeter elements~$c$ and~$c'$, there is a reflection automorphism~$\psi$ mapping~$c$ to $c'$. 
Since~$\psi$ sends reflections to reflections, the absolute length~$\ell_R$ and the absolute order~$\leq_R$ are respected by~$\psi$. 
Hence,~$\psi$ restricts to a poset isomorphism from the interval $[\one,c]_W$ to the interval $[\one,c']_W$ in the absolute order, and we obtain the following corollary.

\begin{corollary}
  \label{cor:ncp}
  Let~$W$ be an irreducible well-generated complex reflection group, and let~$c$ and~$c'$   be two Coxeter elements.
  Then the two posets $\NC{c}$ and $\NC{c'}$   are isomorphic.
\end{corollary}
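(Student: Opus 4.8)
The plan is to deduce the corollary directly from Proposition~\ref{prop:refl-autom}, combined with the elementary observation that a reflection automorphism respects the absolute order. First I would invoke Proposition~\ref{prop:refl-autom}: since~$c$ and~$c'$ are both Coxeter elements and the group $\Aut_R(W)$ acts transitively on the set of Coxeter elements, there exists a reflection automorphism $\psi \in \Aut_R(W)$ with $\psi(c) = c'$.

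Next I would verify the one small point that makes the argument work, namely that~$\psi$ preserves absolute length and hence the absolute order. By definition~$\psi$ is an automorphism of~$W$ permuting the reflection set~$R$, so it carries any factorization $w = r_1 \cdots r_k$ into reflections to a factorization $\psi(w) = \psi(r_1) \cdots \psi(r_k)$ of the same length into reflections; applying the same reasoning to~$\psi^{-1}$ yields $\ell_R(\psi(w)) = \ell_R(w)$ for every $w \in W$. Feeding this equality into the definition~\eqref{eq:abs} of~$\leq_R$ shows that $x \leq_R y$ if and only if $\psi(x) \leq_R \psi(y)$, so~$\psi$ is an automorphism of the poset $(W,\leq_R)$.

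Finally I would restrict this poset automorphism to the relevant order ideals. Since~$\psi$ preserves~$\leq_R$ and sends~$\one$ to~$\one$ and~$c$ to~$c'$, it maps the interval $\NC{c} = [\one,c]_W$ bijectively onto $[\one,\psi(c)]_W = [\one,c']_W = \NC{c'}$, with inverse given by the restriction of~$\psi^{-1}$. Both maps are order-preserving, so the restriction is a poset isomorphism, which proves $\NC{c} \cong \NC{c'}$.

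As for the main obstacle: there essentially is none, precisely because all the substance has already been front-loaded into Proposition~\ref{prop:refl-autom}, which the excerpt identifies as the heart of the paper. The only thing requiring any care is the length-preservation step, and even that is routine once one notes that an automorphism permuting~$R$ induces a length-preserving bijection on reflection factorizations.
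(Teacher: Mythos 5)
Your proposal is correct and follows exactly the paper's own argument: apply Proposition~\ref{prop:refl-autom} to obtain a reflection automorphism $\psi$ with $\psi(c)=c'$, note that $\psi$ preserves $\ell_R$ and hence $\leq_R$ because it permutes $R$, and restrict $\psi$ to a poset isomorphism $[\one,c]_W \to [\one,c']_W$. The extra detail you supply for the length-preservation step (arguing via $\psi$ and $\psi^{-1}$ on reflection factorizations) is precisely the routine verification the paper leaves implicit.
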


This corollary implies that all properties of the poset of noncrossing partitions that have only been proven for the classical definition of Coxeter elements also hold for the more general definition.
In particular, for a Coxeter element~$c \in W$, the poset $\NC{c}$ is an EL-shellable, self-dual lattice whose elements are counted by the \defn{$W$-Catalan number} $\prod_{i=1}^{n}\frac{d_i+h}{d_i}$.
We refer to~\cite{Arm2006,BessisR,KM2008,Muh2011} for these and several other properties of the noncrossing partitions that were so far only proven for the restricted definition of Coxeter elements.

More generally, Proposition~\ref{prop:refl-autom} implies that all the properties of Coxeter elements relative only to the combinatorics of the group~$W$ equipped with its set of generators $R$, do not depend on the choice of a Coxeter element.
For example, the transitivity of the Hurwitz action of the $n$-strands braid group~$B_n$ on the reduced $R$-decompositions of $e^{2i\pi/h}$-regular elements (see~\cite[Definition~6.19, Proposition~7.6]{Bessis2}) implies the same property for all regular elements of order~$h$.
This property was proven for Coxeter elements of infinite Coxeter groups as well in~\cite[Theorem~1.3]{BDSW2014}.

\begin{remark}
  Theorem~\ref{thm:main}\eqref{def5} provides a purely combinatorial description of Coxeter elements in reflection groups admitting generalized Coxeter systems.
  Nevertheless, this property is rather difficult to check for a given element.
  We do not know of any better purely combinatorial description, nor do we know of any combinatorial description for Coxeter elements in arbitrary well-generated groups.
  Coxeter elements have a number of simple properties: they have absolute length~$n$ (the rank of the group), order~$h$, and the Hurwitz action is transitive on the set of reduced~$R$-decompositions of a Coxeter element.
  However, these properties are not sufficient to characterize Coxeter elements.
  For example, there exist elements in the group of type~$D_4$ that are of absolute length~$4$ and for which the Hurwitz action is transitive on reduced $R$-decompositions, but which do not have a primitive~$6$-th root of unity as an eigenvalue\footnote{We thank Patrick Wegener for pointing out the existence of such elements.}.
  Similarly, there exist elements in type~$B_6$ that are of absolute length~$6$ and order~$12$, but which also do not have a primitive~$6$-th root of unity as an eigenvalue.
\end{remark}

\subsection{Conjugacy classes of Coxeter elements}
\label{ssec:galois-conj}

Our second main theorem concerns the connection between Coxeter elements and the field of definition of~$W$.
Recall that the \defn{field of definition}~$K_W$ is the subfield of~$\CC$ generated by the traces of the elements in $W\subseteq \GL(V)$.
It is known, see e.g.~\cite[Proposition~7.1.1]{benson}, that the representation~$V$ of~$W$ can be realized over~$K_W$.
We thus assume from now on that $W\subseteq\GL_n(K_W)$.
We also denote by~$\Gamma_W:=\Gal(K_W/\QQ)$ the Galois group of the field extension~$K_W$ over~$\QQ$.
We next describe an action of~$\Gamma_W$ on the conjugacy classes of Coxeter elements, and show that this action is simply transitive, see Theorem~\ref{thm:galois}.

\medskip

The group~$\Gamma_W$ naturally acts on $\GL_n(K_W)$ by Galois conjugation of the matrix entries. For~$\gamma\in \Gamma_W$, we denote by $\bar{\gamma}$ the associated automorphism of $\GL_n(K_W)$.
However, this action does not necessarily preserve~$W$, so we cannot always associate an automorphism of~$W$ to~$\gamma$.
For  example, if~$W$ is the dihedral group of type~$H_2=I_2(5)$, then~$K_W=\QQ(\sqrt{5})$ and the Galois group~$\Gamma_W$ has order~$2$.
But one can check that the only involutive automorphisms of~$I_2(5)$ are inner automorphisms, i.e., are given by conjugation by an element of~$W$.

Nevertheless, $\bar{\gamma}(W)$ is obviously again a complex reflection group, and it turns out that it is always conjugate to~$W$ in $\GL_n(\CC)$ (cf. \cite[Theorem~8.32]{LehrerTaylor}; see also Corollary~\ref{cor:galoisconj} below).
Let $g\in\GL_n(\CC)$ be such that $\bar{\gamma}(W)=gWg^{-1}$.
We then obtain from $\gamma$ an automorphism of~$W$ given by $w \mapsto g^{-1}\bar{\gamma}(w)g$. 
We call an automorphism obtained this way a \defn{Galois automorphism} of~$W$ attached to $\gamma$. 
These automorphisms have been studied in detail by I.~Marin and J.~Michel in \cite{MM2010}, and we will rely on one of their results, see Proposition~\ref{prop:marinmichel}.

\medskip

We first record the following straightforward properties.
\begin{enumerate}[(i)]
  \item The character of a Galois automorphism attached to $\gamma$ (seen as a representation of~$W$) is given by $w\mapsto \gamma(\tr_V(w))$.

  \item There are several choices for such an automorphism but one can pass from one to another via conjugation by some element in the normalizer $N_W = N_{\GL_n(K_W)}(W)$.\label{it:normalizer}

  \item Any Galois automorphism of~$W$ is a reflection automorphism.

  \item Let~$\psi$ be a reflection automorphism of~$W$. 
  Then~$\psi$ is a Galois automorphism attached to $\gamma$ if and only if the character of~$\psi$ (seen as a representation of~$W$) is the image by~$\gamma$ of the character~$V$, i.e.,
  \[
    \forall w \in W, \tr_V(\psi(w))=\gamma(\tr_V(w)).
  \]
\end{enumerate}

We do not necessarily have a natural action of~$\Gamma_W$ on~$W$, but using~\eqref{it:normalizer}, we get an action of~$\Gamma_W$ on the set of~$N_W$-conjugacy classes of elements in~$W$.
Note that two regular elements which are~$N_W$-conjugate are also~$W$-conjugate. 
Indeed, if~$w'=awa^{-1}$, with~$a\in N_W$, and if~$v$ is a~$\zeta$-regular eigenvector, then~$av$ is a~$\zeta$-eigenvector for~$w'$. 
Moreover it is easy to see that the action of~$N_W$ sends a reflecting hyperplane of~$W$ to another one, so preserves the regular vectors. 
Thus~$av$ is a~$\zeta$-regular eigenvector for~$w'$, so~$w'$ is~$\zeta$-regular, and is~$W$-conjugate to~$w$ by Springer's theorem (see Theorem~\ref{Springer-theory-facts}\eqref{eq:springer1}).

Therefore,~$N_W$-conjugacy classes of regular elements (and in particular of Coxeter elements) are the same as~$W$-conjugacy classes\footnote{In fact,~\cite[Theorem~1.3]{MM2010} actually implies the stronger property that~$\Gamma_W$ always acts on the set of~$W$-conjugacy classes.}.
Denote by~$\Cox(W)$ the set of conjugacy classes of Coxeter elements. By Proposition~\ref{prop:refl-autom}, reflection automorphisms stabilize the set of Coxeter elements, so the action of~$\Gamma_W$ stabilizes~$\Cox(W)$.
Theorem~\ref{thm:galois} below states that the action of~$\Gamma_W$ on~$\Cox(W)$ is simply transitive. 
The transitivity follows from the following result from~\cite{MM2010}.

\begin{proposition}[{cf.~\cite[Theorem 1.2]{MM2010}}]
  \label{prop:marinmichel}
  Any reflection automorphism of an irredu\-cible complex reflection group~$W$ is a Galois automorphism (attached to some~$\gamma\in \Gamma_W$).
\end{proposition}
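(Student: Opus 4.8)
The plan is to reduce the statement, via characterization~(iv) recorded just above, to a claim about the Galois orbit of the reflection representation, and then to place the representation twisted by~$\psi$ into that orbit. Concretely, characterization~(iv) says that a reflection automorphism~$\psi$ is a Galois automorphism attached to some~$\gamma\in\Gamma_W$ exactly when $\tr_V(\psi(w))=\gamma(\tr_V(w))$ for all $w\in W$. Writing $\chi:=\tr_V$ for the character of the reflection representation~$V$ and $V_\psi:=V\circ\psi$ for the representation $w\mapsto V(\psi(w))$, the left-hand side is precisely the character $\chi_{V_\psi}$ of~$V_\psi$. So the proposition becomes the assertion that, for every reflection automorphism~$\psi$, there is $\gamma\in\Gamma_W$ with $\chi_{V_\psi}={}^{\gamma}\chi$, where ${}^{\gamma}\chi:=\gamma\circ\chi$.

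Next I would record the two features of~$V_\psi$ that drive the argument. Since~$\psi$ is an automorphism, $V_\psi$ is again an $n$-dimensional faithful irreducible representation; and since~$\psi$ preserves~$R$, each operator $V_\psi(r)=V(\psi(r))$ with $r\in R$ is a complex reflection. Thus~$V_\psi$ realizes~$W$ as a complex reflection group with the very same reflection set~$R$; I call such a representation a \emph{reflection representation} of $(W,R)$. Moreover every value $\chi_{V_\psi}(w)=\chi(\psi(w))$ is itself a value of~$\chi$, so $\chi_{V_\psi}$ is $K_W$-valued, just like~$\chi$.

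The heart of the proof is then the claim that the reflection representations of $(W,R)$ form a single orbit $\{{}^{\gamma}V:\gamma\in\Gamma_W\}$ under Galois conjugation. One inclusion is routine: for $\sigma\in\Gal(\QQ(\zeta_m)/\QQ)$, where~$m$ is the exponent of~$W$, the twist ${}^{\sigma}V$ sends each $r\in R$ to a complex reflection (its nontrivial eigenvalue being replaced by its $\sigma$-conjugate), the stabilizer of~$\chi$ is $\Gal(\QQ(\zeta_m)/K_W)$, and these twists are therefore indexed by $\Gamma_W=\Gal(K_W/\QQ)$. Granting the claim, $V_\psi$ --- a reflection representation with $K_W$-valued character --- must equal ${}^{\gamma}V$ for some~$\gamma$, giving $\chi_{V_\psi}={}^{\gamma}\chi$ and finishing the proof by the reduction above.

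I expect the main obstacle to be the reverse inclusion: ruling out an \emph{exotic} irreducible reflection representation of $(W,R)$ that is not a Galois twist of~$V$. Here I would exploit a rigidity property of the reflection representation. For each $r\in R$ the nontrivial eigenvalue of $V_\psi(r)$ has the same multiplicative order as that of $V(r)$, since both representations are faithful; hence it equals a primitive power $(\zeta_r)^{k_r}$ of the nontrivial eigenvalue~$\zeta_r$ of $V(r)$. The delicate point is \emph{coherence}: one must show that a single $\sigma\in\Gal(\QQ(\zeta_m)/\QQ)$ simultaneously realizes all these power maps and matches the full character, not merely the eigenvalues on reflections. Establishing this coherence --- using that~$W$ is generated by~$R$ subject to its defining relations, and, if a uniform argument is not available, the Shephard--Todd classification to settle the finitely many irreducible groups --- is precisely where the real work lies, and is the content of the cited theorem of Marin and Michel.
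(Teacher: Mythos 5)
This proposition is not proved in the paper at all: it is imported (note the ``cf.'') from Marin and Michel \cite[Theorem 1.2]{MM2010}, with property (iv) listed just beforehand serving as the paper's dictionary between Galois automorphisms and Galois-twisted characters. Your proposal unfolds exactly that dictionary, correctly, and then defers the one substantive claim --- that every faithful irreducible representation of~$W$ sending~$R$ to reflections is a Galois twist of~$V$ --- to the very theorem being cited, so your treatment coincides with the paper's reliance on \cite{MM2010}; just be aware that what you have written is a (sound) reformulation of the citation, not an independent proof of it.
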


Combined with Proposition~\ref{prop:refl-autom}, this implies that the action of~$\Gamma_W$ is transitive on~$\Cox(W)$. 
The proof of the \emph{simple} transitivity undertaken in Section~\ref{sec:galois} exhibits several other properties related to the field of definition, that we collect in Theorem~\ref{thm:galois}.
We use the following notation:
\begin{itemize}
  \item $m_1=d_1-1,\dots, m_n=d_n-1$ are the \defn{exponents} of~$W$,
  \item $\varphi(j)$ (for $j\in \NN$) is the number of integers in $\{1,\dots,j\}$ that are coprime to~$j$, and
  \item $\varphi_W(j)$ is the number of integers coprime to~$j$ among the set of exponents $\{m_1,\dots,m_n\}$.
\end{itemize}

For an irreducible, well-generated group~$W$ with Coxeter number~$h$, we also denote by~$G_W$ the setwise stabilizer of $\big\{\zeta^{m_1},\ldots,\zeta^{m_n}\big\}$ in the Galois group $\Gal(\QQ(\zeta)/\QQ)$, where~$\zeta$ is a primitive~$h$-th root of unity.
\begin{theorem}
  \label{thm:galois}
  Let~$W$ be an irreducible well-generated complex reflection group,~$K_W$ be its field of definition, and~$\Cox(W)$ be the set of conjugacy classes of Coxeter elements of~$W$.
  The following properties hold:
\begin{enumerate}[(i)]
  \item the transitive action of~$\Gamma_W = \Gal(K_W/\QQ)$ on~$\Cox(W)$ is free; \label{eq:freeaction0}
  \item $\displaystyle{[K_W : \QQ]= \left | \Cox(W) \right| = \varphi(h) / \varphi_W(h)}$; \label{eq:degreeoffieldextension0}
  \item  $K_W$ is equal to the fixed field $\QQ(\zeta)^{G_W}$; \label{eq:invariantsubfield0}
  \item $K_W$ is generated by the coefficients of the characteristic polynomial of any Coxeter element of~$W$. \label{eq:generatedbycoefficients0}
\end{enumerate}
\end{theorem}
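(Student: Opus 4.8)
The plan is to parametrize Coxeter elements by the eigenvalues of a fixed $e^{2i\pi/h}$-regular element and to reduce all four assertions to the single identity $K_W=\QQ(\zeta)^{G_W}$ of part~\eqref{eq:invariantsubfield0}. To this end, fix a primitive $h$-th root of unity $\zeta=e^{2i\pi/h}$ and an $e^{2i\pi/h}$-regular element $w$. By Springer's theory (Theorem~\ref{Springer-theory-facts}) the eigenvalues of $w$ on $V$ are $\zeta^{-m_1},\dots,\zeta^{-m_n}$, so $\charpol_w(X)=\prod_{i=1}^n(X-\zeta^{-m_i})$. Identifying $\Gal(\QQ(\zeta)/\QQ)$ with $(\mathbb{Z}/h)^\times$ via $\sigma_a(\zeta)=\zeta^a$, the subgroup fixing every coefficient of $\charpol_w$ is the setwise stabilizer of $\{\zeta^{-m_i}\}$, which equals $G_W$ (since $\sigma_a$ stabilizes $\{\zeta^{m_i}\}$ if and only if it stabilizes $\{\zeta^{-m_i}\}$, the sign being immaterial). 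Hence the field $K_c$ generated by the coefficients of $\charpol_w$ equals $\QQ(\zeta)^{G_W}$. As each such coefficient is $\pm\tr_{\Lambda^jV}(w)$, a $\mathbb{Z}$-polynomial in the $\tr_V(w^m)$, it lies in $K_W$; this gives the easy inclusion $\QQ(\zeta)^{G_W}=K_c\subseteq K_W$, and it already yields~\eqref{eq:generatedbycoefficients0} once~\eqref{eq:invariantsubfield0} is known. Moreover, by Theorem~\ref{thm:main}\eqref{def3} every Coxeter element is conjugate to some $w^p$ with $\gcd(p,h)=1$, so $\Phi\colon(\mathbb{Z}/h)^\times\to\Cox(W),\ p\mapsto[w^p]$, is onto.

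Next I record that $K_W/\QQ$ is abelian (traces are sums of roots of unity, so $K_W$ lies in a cyclotomic field), hence Galois, and that $K_W=\QQ(\chi_V)$, so $|\Gamma_W|=[K_W:\QQ]$. For $\gamma\in\Gamma_W$ let $\psi_\gamma$ be an attached Galois automorphism; from the character formula $\tr_V(\psi_\gamma(w))=\gamma(\tr_V(w))$ one gets $\charpol_{\psi_\gamma(c)}=\gamma(\charpol_c)$ for any Coxeter element $c$. The key elementary lemma will be~(C): \emph{two Coxeter elements are $W$-conjugate if and only if they have the same characteristic polynomial}, which I will deduce from Springer's theorem together with the fact that a regular element of order $h$ is $\xi$-regular for each of its eigenvalues $\xi$ of order $h$ (Theorem~\ref{Springer-theory-facts}). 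Granting~(C), $\gamma$ fixes $[c]$ if and only if $\gamma$ fixes every coefficient of $\charpol_c$, i.e.\ $\gamma|_{K_c}=\id$; thus $\mathrm{Stab}_{\Gamma_W}([c])=\Gal(K_W/K_c)$. Orbit--stabilizer applied to the transitive $\Gamma_W$-action then gives $|\Cox(W)|=[K_c:\QQ]$, and shows that the action is free if and only if $K_c=K_W$, if and only if $[K_W:\QQ]=|\Cox(W)|$. So part~\eqref{eq:freeaction0} and the first equality of~\eqref{eq:degreeoffieldextension0} are both equivalent to~\eqref{eq:invariantsubfield0}.

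Using~(C) I will then compute $|G_W|$ combinatorially: from $\{a:w^a\sim w\}=G_W$ and the fact that $\zeta^a$ is an eigenvalue of $w^a$, every $a\in G_W$ equals some $-m_j$ with $\gcd(m_j,h)=1$; conversely, for each coprime exponent $m_j$ the elements $w$ and $w^{-m_j}$ are both $\zeta^{-m_j}$-regular, hence conjugate. This identifies $G_W=\{-m_j\bmod h:\gcd(m_j,h)=1\}$, so $|G_W|=\varphi_W(h)$ (the coprime exponents being pairwise distinct modulo $h$), giving the second equality of~\eqref{eq:degreeoffieldextension0} and $[\QQ(\zeta)^{G_W}:\QQ]=\varphi(h)/\varphi_W(h)$. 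It then remains to prove~\eqref{eq:invariantsubfield0}, $K_W=\QQ(\zeta)^{G_W}$, which by the easy inclusion amounts to the upper bound $K_W\subseteq\QQ(\zeta)^{G_W}$. This is the main obstacle: one must control the traces of \emph{all} elements of $W$, whereas the powers of a single Coxeter element only generate $K_c$. Equivalently, one must rule out a nontrivial reflection automorphism preserving every Coxeter class but moving some other trace; by Proposition~\ref{prop:marinmichel} it would be a Galois automorphism $\psi_\gamma$ with $\gamma|_{K_c}=\id$ and $\gamma\neq\id$. I plan to exclude this by appealing to Marin--Michel's analysis of Galois automorphisms to establish both $K_W\subseteq\QQ(\zeta)$ and $\Gal(\QQ(\zeta)/K_W)=G_W$, i.e.\ that every $\sigma_a$ with $a\in G_W$ fixes the whole character $\chi_V$ and not merely its restriction to $\langle c\rangle$. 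Once~\eqref{eq:invariantsubfield0} is in hand, \eqref{eq:generatedbycoefficients0} is immediate and the reductions above deliver~\eqref{eq:freeaction0} and~\eqref{eq:degreeoffieldextension0}, establishing the simple transitivity of the $\Gamma_W$-action on $\Cox(W)$.
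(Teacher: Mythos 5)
Your reduction work is essentially correct, and it reconstructs the paper's Proposition~\ref{prop:equiv} in a slightly different packaging: your identification of $\QQ(\zeta)^{G_W}$ with the field $K_c$ generated by the coefficients of $\charpol_c$ is the paper's Lemma~\ref{lem:zeta}, your computation $|G_W|=\varphi_W(h)$ is Lemma~\ref{lem:exponents}, and your orbit--stabilizer argument (via Lemma~(C), which is valid because an element with an eigenvalue of order $h$ is in fact regular for that eigenvalue --- the same fact underlying Theorem~\ref{thm:main}\eqref{def2}$\Rightarrow$\eqref{def1} and the paper's Lemma~\ref{lem:nccox}) replaces the paper's direct cardinality comparison. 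So far, so good: you have correctly shown that \eqref{eq:freeaction0}, \eqref{eq:degreeoffieldextension0}, \eqref{eq:invariantsubfield0} and \eqref{eq:generatedbycoefficients0} are all equivalent to one another.

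The gap is that you never prove any one of them, and that missing step is the crux of the theorem. Everything you establish unconditionally amounts to the easy inclusion $\QQ(\zeta)^{G_W}\subseteq K_W$, equivalently (via transitivity of the $\Gamma_W$-action) the lower bound $[K_W:\QQ]\geq|\Cox(W)|$; the theorem is precisely the reverse inclusion, and your stated plan to obtain it ``by appealing to Marin--Michel's analysis \dots\ to establish $\Gal(\QQ(\zeta)/K_W)=G_W$'' is circular, since that equality \emph{is} statement~\eqref{eq:invariantsubfield0} restated. The result of Marin--Michel that the paper actually uses (Proposition~\ref{prop:marinmichel}: every reflection automorphism is a Galois automorphism) yields only the transitivity of the action, i.e.\ the bound you already have; nothing in it controls $[K_W:\QQ]$ from above. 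Indeed the paper states explicitly that no uniform argument for this step is known: it is settled by verifying $[K_W:\QQ]=\varphi(h)/\varphi_W(h)$ case by case through the Shephard--Todd classification (the tables of \cite{BMR1995} for the exceptional groups, a short computation for the infinite series), exactly as in Malle's earlier classification-based proof of $K_W=\QQ(\zeta)^{G_W}$ in \cite[Theorem~7.1]{malle}. Without supplying that verification (or a genuinely new uniform argument), your proof is incomplete.
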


\begin{remark}
  The characterization of~$K_W$ in~\eqref{eq:invariantsubfield0} (or in~\eqref{eq:generatedbycoefficients0}, which is easily seen to be equivalent) has already been obtained by G.~Malle in~\cite[Theorem~7.1]{malle}, his proof using a case-by-case check via the classification. 
  We found it independently by other means but we also need a case-by-case analysis. 
\end{remark}

The equality $\left | \Cox(W) \right| = \varphi(h) / \varphi_W(h)$ is a direct consequence of Springer's theory (see Lemma~\ref{lem:nccox}) and is included in the theorem for the sake of clarity. 
In Section~\ref{sec:galois}, we first prove that the four statements are equivalent; the proof for this is case-free, except for the use of Proposition~\ref{prop:refl-autom}.
The theorem is then derived by checking via the classification that the equality $[K_W : \QQ]= \varphi(h) / \varphi_W(h)$ is satisfied for any irreducible well-generated group.

\begin{remark}
  \label{rk:weyl}
  Recall that a complex reflection group is a finite Weyl group if and only if its field of definition is~$\QQ$.
  Theorem~\ref{thm:galois}\eqref{eq:freeaction0} thus implies that all regular elements of order~$h$ are $e^{2i\pi/h}$-regular if and only if~$W$ is a Weyl group.
  We also recover the well-known fact that for Weyl groups, the equality $\varphi_W(h)=\varphi(h)$ holds, see~\cite[Proposition~3.20]{Humphreys}.
\end{remark}

\begin{remark} 
  \label{rk:bad}
  The intriguing relation in Theorem~\ref{thm:galois}\eqref{eq:degreeoffieldextension0} between the field of definition and the residues of the exponents modulo~$h$ yields the question of what happens for badly-generated groups.
  There are~$8$ of them in the exceptional types.
  For all but~$G_{15}$, the highest invariant degree~$d_n$ is still a regular number, so we could define a Coxeter element as a regular element of order $d_n$ as for well-generated groups. 
  For badly-generated groups in the infinite series,~$d_n$ is regular only for the groups of type $G(2d,2,2)$.
  In these cases, the equality $|\Cox(W) |=\varphi(d_n) / \varphi_W(d_n)$ still holds for the same reasons as for well-generated groups.
  Moreover, we have
  \begin{enumerate}[(i)]
    \item for $G(2d,2,2)$, $G_7$, $G_{11}$, $G_{12}$, $G_{19}$, $G_{22}$ and $G_{31}$, one still has $[K_W:\QQ]=\varphi(d_n) / \varphi_W(d_n)$, whereas \label{eq:bad1}
    \item for $G_{13}$ (and also for~$G_{15}$),  one has $[K_W:\QQ]=2 \varphi(d_n) / \varphi_W(d_n)$.
  \end{enumerate}
  We do not know of an explanation for these observations. 
  This actually implies that for all the groups listed in~\eqref{eq:bad1}, Theorem~\ref{thm:galois} still holds.
  This will be explained in Remark~\ref{rk:bad2}.
\end{remark}

\subsection{Regular elements and reflection automorphisms}
\label{ssec:regular}
In the final Section~\ref{sec:reg} we extend some of our results to regular elements of arbitrary order. 
In particular, statement~\eqref{it:trans-reg} below is a generalized version of Proposition~\ref{prop:refl-autom}.

\begin{theorem}
  \label{thm:refl-autom-reg}
  Let~$W$ be an irreducible complex reflection group, $K_W$ be its field of definition, and~$d$ be a regular number for~$W$. 
  Denote by~$\Cox_d(W)$ the set of conjugacy classes of regular elements of order~$d$. 
  Then
  \begin{enumerate}[(i)]
  \item the action of reflection automorphisms on~$W$ preserves the set of regular elements of order~$d$, and is transitive on it;\label{it:trans-reg}
  \item the natural action of $\Gamma_W = \Gal(K_W/\QQ)$ on $\Cox_d(W)$ is transitive;\label{it:galois-trans}
  \item the cardinality of $\Cox_d(W)$ is $\varphi(d)/\varphi_W(d)$;\label{it:card-conj}
  \item the integer $\varphi(d)/\varphi_W(d)$ divides $[K_W:\QQ]$.\label{it:divides}
  \end{enumerate}
\end{theorem}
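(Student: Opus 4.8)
The plan is to prove the four assertions in the order \eqref{it:card-conj}, \eqref{it:trans-reg}, \eqref{it:galois-trans}, \eqref{it:divides}, since the last three build on one another whereas \eqref{it:card-conj} is self-contained. For \eqref{it:card-conj} I would simply rerun the Springer-theoretic count of Lemma~\ref{lem:nccox}, none of which uses that the regular number equals~$h$. By Theorem~\ref{Springer-theory-facts}\eqref{eq:springer1} the $\zeta$-regular elements form a single conjugacy class~$C_\zeta$ for each primitive $d$-th root of unity~$\zeta$, and these exhaust~$\Cox_d(W)$; since a $\zeta$-regular element has characteristic polynomial $\prod_{i=1}^n(X-\zeta^{m_i})$, two such classes $C_\zeta$ and $C_{\zeta^a}$ coincide exactly when multiplication by~$a$ stabilizes the multiset $\{m_1,\dots,m_n\}$ modulo~$d$, and the resulting count of classes is $\varphi(d)/\varphi_W(d)$.

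For \eqref{it:trans-reg} the crux is that a reflection automorphism, though a priori only an abstract automorphism of~$W$, is by Proposition~\ref{prop:marinmichel} a Galois automorphism $\psi_\gamma\colon w\mapsto g^{-1}\bar\gamma(w)g$ for some $\gamma\in\Gamma_W$ and $g\in\GL_n(\CC)$ with $\bar\gamma(W)=gWg^{-1}$. To see that~$\psi_\gamma$ preserves regular elements of order~$d$, let~$w$ be $\zeta$-regular with regular eigenvector~$v$, and fix an extension~$\tilde\gamma$ of~$\gamma$ to a number field containing~$\zeta$ and the coordinates of~$v$. Then $\bar\gamma(w)\,\tilde\gamma(v)=\tilde\gamma(\zeta)\,\tilde\gamma(v)$, and because Galois conjugation carries the reflecting hyperplanes of~$W$ onto those of~$\bar\gamma(W)$, the vector $g^{-1}\tilde\gamma(v)$ is a regular eigenvector of $\psi_\gamma(w)$ with eigenvalue~$\tilde\gamma(\zeta)$, again a primitive $d$-th root. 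For transitivity I would first note that the traces $\tr_V(w^j)$, which are the power sums of the eigenvalues, lie in~$K_W$, so the characteristic polynomial of a $\zeta$-regular element has coefficients in $K_W\cap\QQ(\zeta)$ and hence~$C_\zeta$ depends only on~$\zeta$ modulo $\Gal(\QQ(\zeta)/(K_W\cap\QQ(\zeta)))$. Given primitive $d$-th roots $\zeta,\zeta'$, write $\zeta'=\sigma(\zeta)$ with $\sigma\in\Gal(\QQ(\zeta)/\QQ)$ and use surjectivity of the restriction $\Gamma_W\twoheadrightarrow\Gal((K_W\cap\QQ(\zeta))/\QQ)$ to pick $\gamma\in\Gamma_W$ agreeing with~$\sigma$ on $K_W\cap\QQ(\zeta)$. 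Then $\tilde\gamma(\zeta)$ and~$\zeta'$ lie in a common $\Gal(\QQ(\zeta)/(K_W\cap\QQ(\zeta)))$-orbit, whence $\psi_\gamma(w)\in C_{\tilde\gamma(\zeta)}=C_{\zeta'}$; composing with a suitable conjugation (again a reflection automorphism, furnished by Theorem~\ref{Springer-theory-facts}\eqref{eq:springer1}) then carries~$w$ to any prescribed $\zeta'$-regular element.

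Assertion~\eqref{it:galois-trans} is now immediate: by Proposition~\ref{prop:marinmichel} every reflection automorphism acts on~$\Cox_d(W)$ through its attached~$\gamma\in\Gamma_W$, while inner automorphisms fix each class, so the transitivity of the reflection-automorphism action established in~\eqref{it:trans-reg} descends to transitivity of~$\Gamma_W$ on~$\Cox_d(W)$. For~\eqref{it:divides}, the orbit--stabilizer theorem applied to this transitive action gives that $|\Cox_d(W)|$ divides $|\Gamma_W|=[K_W:\QQ]$, and substituting the value from~\eqref{it:card-conj} yields $\varphi(d)/\varphi_W(d)\mid[K_W:\QQ]$. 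Unlike the Coxeter case of Theorem~\ref{thm:galois}, this argument gives only divisibility and not equality, consistent with the statement not claiming freeness of the action for general~$d$.

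The main obstacle is the preservation step of~\eqref{it:trans-reg}: a reflection automorphism is presented combinatorially, as a permutation of~$R$ extending to an automorphism, and one must show that it respects the genuinely geometric notion of Springer regularity. This is precisely the point where Proposition~\ref{prop:marinmichel} is indispensable, as it supplies the concrete linear model $g^{-1}\bar\gamma(\,\cdot\,)g$; the remaining delicate part is the bookkeeping of the field extensions~$\tilde\gamma$, both to track the image of the regular eigenvector and to guarantee that the ambiguity in~$\tilde\gamma$, namely the group $\Gal(\QQ(\zeta)/(K_W\cap\QQ(\zeta)))$, lands inside the stabilizer governing the identification of the classes~$C_\zeta$, so that the constructed automorphism indeed reaches the intended conjugacy class. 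Once this arithmetic-to-geometric translation is secured, assertions~\eqref{it:galois-trans} and~\eqref{it:divides} follow formally and~\eqref{it:card-conj} is a routine instance of Springer's theory.
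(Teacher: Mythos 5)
Your proposal is correct, and its overall architecture coincides with the paper's: Proposition~\ref{prop:marinmichel} and Corollary~\ref{cor:galoisconj} to realize reflection automorphisms as Galois automorphisms $x\mapsto g^{-1}\bar\gamma(x)g$, Corollary~\ref{Galois-corollary} to extend field automorphisms to $K_W(\zeta)$, explicit tracking of the regular eigenvector to prove preservation, Springer's Theorem~\ref{Springer-theory-facts}\eqref{eq:springer1} plus an inner automorphism to finish transitivity, and the same formal derivations of \eqref{it:galois-trans}, \eqref{it:card-conj} and \eqref{it:divides}. The one genuine divergence is the mechanism of the transitivity half of \eqref{it:trans-reg}. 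The paper (Section~\ref{ssec:trans-reg}) starts on the cyclotomic side: it extends the \emph{desired} automorphism $\zeta\mapsto\zeta^p$ of $\QQ(\zeta)$ to some $\delta\in\Gal(K_W(\zeta)/\QQ)$ via Corollary~\ref{Galois-corollary}\eqref{it:extension2}, so that the resulting reflection automorphism sends a $\zeta$-regular element to a $\zeta^p$-regular one on the nose, and afterwards only Theorem~\ref{Springer-theory-facts}\eqref{eq:springer1} is needed. You instead start inside $\Gamma_W$, choose $\gamma$ agreeing with the target automorphism only on $K_W\cap\QQ(\zeta)$, and must then repair the mismatch between $\tilde\gamma(\zeta)$ and $\zeta'$ through your claim that the class $C_\zeta$ of $\zeta$-regular elements depends only on $\zeta$ modulo $\Gal\bigl(\QQ(\zeta)/(K_W\cap\QQ(\zeta))\bigr)$. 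That claim is not purely formal: it requires the implication ``equal characteristic polynomials $\Rightarrow$ equal conjugacy classes'' for regular elements of order~$d$, which is exactly the Springer-theoretic ingredient behind the counting statement (the analogue, with $h$ replaced by $d$, of Lemmas~\ref{lem:nccox} and~\ref{lem:exponents}), and which the paper invokes only for \eqref{it:card-conj}, never for \eqref{it:trans-reg}. So your ordering --- proving \eqref{it:card-conj} first --- is not merely cosmetic: your proof of \eqref{it:trans-reg} genuinely leans on that class-coincidence criterion, whereas the paper keeps \eqref{it:trans-reg} independent of any eigenvalue bookkeeping. Both routes are valid, since the criterion is available by the same argument the paper uses for the count; the paper's route is the leaner one, needing only Springer's conjugacy theorem at the final step.
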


Unlike for Coxeter elements, in the general case the action in statement~\eqref{it:galois-trans} may be not free.
In that case, $\varphi(d)/\varphi_W(d)$ is a proper divisor of $[K_W:\QQ]$.

\section{Coxeter elements and reflection automorphisms}
\label{sec:refl-autom}

In this section we prove Proposition~\ref{prop:refl-autom}, thus obtaining the characterization~\eqref{def4} in Theorem~\ref{thm:main}.
We first recall in Section~\ref{ssec:background} some more background on complex reflection groups. 
We also record some facts on extensions of Galois automorphisms in Section~\ref{ssec:extension}.
We then prove in Section~\ref{ssec:image} that the action in Proposition~\ref{prop:refl-autom} is well defined, thus establishing the implication $\eqref{def4}\Rightarrow\eqref{def1}$ in Theorem~\ref{thm:main}.
We finally prove in Section~\ref{ssec:trans} the transitivity part of Proposition~\ref{prop:refl-autom} corresponding to the implication $\eqref{def1}\Rightarrow\eqref{def4}$.

\subsection{Background on complex reflection groups}
\label{ssec:background}

We first need to recall some facts from the classification of irreducible complex reflection groups.
Shephard-Todd's classification~\cite{ShephardTodd} provides that the Shephard-Todd types $G(de,e,n)$ and $G_4,G_5,\ldots,G_{37}$, except for obvious coincidences, contain \defn{exactly one} representative of each $\GL_n(\CC)$-conjugacy class of irreducible reflection groups.
This classification proof was reworked more recently by G.~I.~Lehrer and D.~E.~Taylor in~\cite[Theorem~8.29]{LehrerTaylor}.
Moreover, part of this classification asserts that the groups in the infinite family $G(de,e,n)$ for $n \geq 2$ are the only irreducible reflection groups that act \emph{imprimitively}\footnote{An \defn{imprimitive} action is one where there is a nontrivial direct sum decomposition $V=\oplus_{i=1}^t V_i$ respected by~$W$, in the sense that for each~$w \in W$, there is a permutation $\sigma$ of   $\{1,2,\ldots,t\}$ for which $w(V_i) = V_{\sigma(i)}$.} on $V=\CC^n$, while the exceptional irreducible groups $G_4, G_5, \ldots,G_{37}$ all act \emph{primitively}.

\medskip

We record the following observation, that was communicated to us by D.~Bessis\footnote{Private communication with C.~Stump at the conference \emph{Hyperplane Arrangements: combinatorial and geometric aspects} of the DFG Priority Programms \lq\lq Representation Theory\rq\rq\ and \lq\lq Algorithmic and Experimental Methods in Algebra, Geometry and Number Theory\rq\rq, February 2013 in Bochum, Germany.}.
\begin{proposition}
  \label{prop:observation}
  Let $W,W'\subseteq \GL_n(\CC)$ be two irreducible complex reflection groups.
  Suppose that
  \begin{itemize}
  \item~$W$ and $W'$ are either both primitive, or both imprimitive;
  \item~$W$ and $W'$ have the same multiset of degrees, and the same multiset of codegrees.
  \end{itemize}
  Then~$W$ and $W'$ have the same Shephard-Todd type. In particular, they are conjugate in~$\GL_n(\CC)$.
\end{proposition}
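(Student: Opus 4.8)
The plan is to deduce the proposition directly from the Shephard--Todd classification recalled above. Since that classification provides, up to the obvious coincidences, exactly one $\GL_n(\CC)$-conjugacy class for each type, it suffices to show that $W$ and $W'$ have the same Shephard--Todd type; the final sentence then follows at once. The imprimitive irreducible groups are exactly the $G(de,e,n)$ with $n\ge 2$, while the primitive ones are exactly $G_4,\ldots,G_{37}$ (the primitive real reflection groups, such as $H_3,H_4,F_4,E_6,E_7,E_8$, occurring among them). The sole purpose of the first hypothesis is thus to ensure that we never compare a group from the infinite family with an exceptional primitive group: it excludes any accidental coincidence of degrees and codegrees across the two families, and reduces the proof to two independent cases. (For $n=1$ there is nothing to prove, the single degree determining the cyclic type.)

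In the imprimitive case, write $W=G(de,e,n)$ and $W'=G(d'e',e',n')$ with $n,n'\ge 2$. The rank $n$ is read off as the number of degrees, so $n=n'$. The degrees of $G(de,e,n)$ are
\[
  de,\ 2de,\ \ldots,\ (n-1)de,\ nd,
\]
and the codegrees are given by an equally explicit list in $d,e,n$ (see~\cite{LehrerTaylor}). First I would recover the common difference $de$ and the isolated degree $nd$ from the degree multiset, and use the codegrees to settle the cases where the degrees alone leave an ambiguity in $(d,e)$; this determines $(d,e,n)$, and hence the type. The verification is elementary but must account for the standard coincidences inside the family (for instance $G(e,e,2)=I_2(e)$, and the degenerate parameter values excluded from the list of irreducible groups), which are handled directly.

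In the primitive case, $W$ and $W'$ lie in the finite list $G_4,\ldots,G_{37}$, and the claim becomes the assertion that the assignment
\[
  (\text{type})\ \longmapsto\ \big(\text{multiset of degrees},\ \text{multiset of codegrees}\big)
\]
is injective on this list. I would establish this by inspecting the tabulated degrees and codegrees of the primitive irreducible complex reflection groups (as in~\cite{LehrerTaylor}), checking that no two distinct types agree simultaneously on both invariants. I expect this finite table check to be the main obstacle: it is the step that resists any uniform structural argument and genuinely relies on the classification data. It is also the step that justifies including the codegrees in the hypotheses, since there do occur distinct primitive types with the same multiset of degrees, separated only by their codegrees.

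In both cases $W$ and $W'$ share the same Shephard--Todd type, and therefore lie in the same $\GL_n(\CC)$-conjugacy class by the uniqueness in the classification.
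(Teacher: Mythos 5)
Your overall strategy is exactly the paper's: invoke the Shephard--Todd classification and check that within each primitivity class there are no simultaneous coincidences of the multiset of degrees and the multiset of codegrees; the paper's proof is precisely this verification, split into the imprimitive family $G(de,e,n)$ and the primitive groups, and you correctly identify why the codegrees must be included (e.g.\ $G_8$ vs.\ $G_{13}$ and $G_{10}$ vs.\ $G_{15}$ share degrees). However, there is a genuine gap in your taxonomy of the primitive case: the primitive irreducible complex reflection groups are \emph{not} exactly $G_4,\ldots,G_{37}$. The symmetric groups in their irreducible reflection representation (type $A_n$, i.e.\ $G(1,1,n+1)$ acting on $\CC^n$) are primitive for $n\geq 3$, and they belong to neither of your two lists: they are not of the form $G(de,e,n)$ irreducible on $\CC^n$ with $de\geq 2$, and they carry no exceptional label. (In rank $2$ one has the coincidence $A_2=G(3,3,2)$, which is imprimitive, and rank $1$ you treat separately; but type $A$ in rank $\geq 3$ is genuinely missing.) This is exactly why the paper's proof states that the check is performed ``within the primitive groups (type A and exceptional groups)''.

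Concretely, your argument as written never rules out the possibility that some exceptional group has the same degrees and codegrees as a symmetric group, so the claimed injectivity of the map sending a type to its pair (degrees, codegrees) is unproven on part of the primitive list. The fix is short but must be done: within type $A$ the rank, hence the type, is determined by the number of degrees, and one checks from the tables that no exceptional group of rank $n$ with $3\leq n\leq 8$ has degree multiset $\{2,3,\ldots,n+1\}$ (for instance, the rank-$3$ exceptionals have degree sets $\{2,6,10\}$, $\{4,6,14\}$, $\{6,9,12\}$, $\{6,12,18\}$, $\{6,12,30\}$, none equal to $\{2,3,4\}$, and similarly in ranks $4$ through $8$). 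With this additional family included, your proof coincides with the paper's.
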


As far as we know, this observation has not been given a conceptual explanation. 
The proof of Proposition~\ref{prop:observation} simply goes by checking that, in the list of irreducible groups, there are no coincidences both of the multisets of degrees and of the multiset of codegrees:
\begin{itemize}
  \item within the imprimitive family $G(de,e,n)$ (for $de\neq 1$), nor

  \item within the primitive  groups (type A and exceptional groups, see for example the tables in \cite{BMR98}). 
\end{itemize}

\begin{remark}
  If one assumes in Proposition~\ref{prop:observation} that~$W$ and~$W'$ are well-generated, then the multisets of codegrees do not need to be checked (since in this case the degrees determine the codegrees). 
  In the general case, checking the codegrees is necessary because of the coincidences of degrees between $G_{10}$ and $G_{15}$, and between $G_8$ and $G_{13}$.
\end{remark}

\begin{corollary}
  \label{cor:galoisconj}
  Let $W \subseteq \GL_n(\CC)$ be an irreducible complex reflection group, and~$K$ be an algebraic extension of $\QQ$ such that $W\subseteq\GL_n(K)$. 
  For $\gamma\in \Gal(K/\QQ)$, denote by $\bar{\gamma}$ the associated automorphism of $\GL_n(K)$.
  Then for any $\gamma\in \Gal(K/\QQ)$, the reflection group $\bar{\gamma}(W)$ is conjugate to~$W$ in $\GL_n(\CC)$.
\end{corollary}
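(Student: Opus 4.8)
The plan is to exhibit $\bar{\gamma}(W)$ as an irreducible complex reflection group sharing with $W$ exactly the data fed into Proposition~\ref{prop:observation} — the primitivity class, the multiset of degrees, and the multiset of codegrees — and then to invoke that proposition to conclude that $\bar{\gamma}(W)$ and $W$ have the same Shephard--Todd type, hence are conjugate in $\GL_n(\CC)$. First I would record that $\bar{\gamma}$, being $\gamma$ applied entrywise, is a group automorphism of $\GL_n(K)$ (it preserves products because the matrix-multiplication formulas are integer polynomials in the entries), so $\bar{\gamma}(W)$ is a subgroup of $\GL_n(K)\subseteq\GL_n(\CC)$ abstractly isomorphic to $W$; concretely, its inclusion into $\GL_n(\CC)$ realizes the Galois conjugate $\rho^{\gamma}=\bar{\gamma}\circ\rho$ of the reflection representation $\rho$ of $W$. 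Everything then reduces to checking that the reflection property, irreducibility, primitivity, and the two multisets of (co)degrees are invariant under this Galois twist.

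That $\bar{\gamma}(W)$ is again a reflection group is the easy point. For $w\in W$ the characteristic polynomial of $\bar{\gamma}(w)$ is obtained from that of $w$ by applying $\gamma$ to its coefficients, which lie in $K$; so if $r$ is a reflection, with $\charpol_r(t)=(t-1)^{n-1}(t-\zeta)$ and $\zeta\in K$ a root of unity $\neq 1$, then $\charpol_{\bar{\gamma}(r)}(t)=(t-1)^{n-1}(t-\gamma(\zeta))$ with $\gamma(\zeta)$ again a root of unity $\neq 1$. Hence $\bar{\gamma}(r)$ is a reflection, and as reflections generate $W$ their images generate $\bar{\gamma}(W)$. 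For irreducibility I would use Burnside's theorem: $W$ is irreducible iff the $\{\rho(w)\}$ span $M_n(K)$ over $K$ (this can be tested over $K$ since extending scalars to $\CC$ gives the $\CC$-span), and the $K$-semilinear bijection $\bar{\gamma}$ carries the $K$-span of $\{\rho(w)\}$ onto that of $\{\bar{\gamma}(\rho(w))\}$, so the latter is again all of $M_n(K)$.

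The equality of degrees and of codegrees I would obtain uniformly from Molien's formula. Writing $c_d(w)=\tr_{\Sym^d V}(w)\in K$, the degree-$d$ coefficient of
\[
M_W(t)=\frac{1}{|W|}\sum_{w\in W}\frac{1}{\det(\one-tw)}=\prod_{i=1}^n\frac{1}{1-t^{d_i}}
\]
is $\tfrac{1}{|W|}\sum_{w}c_d(w)=\dim\big(\Sym^d V\big)^W$, a nonnegative rational integer. The same coefficient for $\bar{\gamma}(W)$ equals $\tfrac{1}{|W|}\sum_w\gamma\big(c_d(w)\big)=\gamma\big(\dim(\Sym^d V)^W\big)$, which is fixed by $\gamma$ since the dimension lies in $\QQ$; thus $M_{\bar{\gamma}(W)}=M_W$ and the degrees agree. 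Applying the identical argument to the graded dimensions $\dim\big(\Sym^d(V^*)\otimes V\big)^W$, whose generating function has the form $\big(\sum_i t^{d_i^*}\big)\big/\prod_i(1-t^{d_i})$, shows that its numerator, and hence the multiset of codegrees $d_i^*$, is also unchanged.

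The remaining and, in my view, most delicate step is the invariance of the primitivity class, since this is not visibly read off from the character. Here I would route through the induced-representation characterization: for an irreducible group, imprimitivity is equivalent to the reflection representation being induced from a proper subgroup, because any system of imprimitivity must have transitively permuted blocks (otherwise the span of a block orbit would be a proper invariant subspace), which exhibits $\rho\cong\mathrm{Ind}_H^W\tau$ with $H\lneq W$. As Galois conjugation commutes with induction, $\rho^{\gamma}\cong\mathrm{Ind}_H^W\tau^{\gamma}$, so $\bar{\gamma}(W)$ is imprimitive precisely when $W$ is (the converse using $\gamma^{-1}$). The main obstacle is exactly this point: one cannot simply transport a system of imprimitivity by $\bar{\gamma}$, as its blocks need not be defined over $K$, so the argument must pass through induction rather than through a direct manipulation of the decomposition. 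With primitivity class, degrees, and codegrees all matched, Proposition~\ref{prop:observation} completes the proof.
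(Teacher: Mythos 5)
Your proposal is correct and follows essentially the same route as the paper: both reduce the statement to Proposition~\ref{prop:observation} by checking that the Galois twist $\bar{\gamma}(W)$ is again an irreducible reflection group with the same primitivity class and the same multisets of degrees and codegrees. The differences are only in the verification details — the paper reads off the (co)degrees from the twisted fundamental invariants and a formula in Lehrer--Taylor while you use Galois-invariance of Molien-type series, and you supply a genuine argument (via the induced-representation characterization of imprimitivity) for the primitivity invariance that the paper asserts without proof — so these are refinements within the same approach rather than a different one.
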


\begin{proof}
  The reflection group $\bar{\gamma}(W)$ has the same degrees as~$W$ (since they are the degrees of the fundamental invariant polynomials), and the same codegrees as well (use for example the last formula in~\cite[Appendix~D.2]{LehrerTaylor}). 
  Moreover, $\bar{\gamma}(W)$ is primitive if and only if~$W$ is primitive. 
  So the proof follows from Proposition~\ref{prop:observation}.
  Note that this fact can also be deduced from a precise treatment of the classification, as mentioned in~\cite[Theorem~8.32]{LehrerTaylor}.
\end{proof}

Next, we record two facts from Springer's theory of regular elements 
for later convenience.

\begin{theorem}[{cf.~\cite[Theorem 4.2 (iv,v)]{Spr1974}}]
\label{Springer-theory-facts}
  Let~$\zeta$ be a root of unity and let~$w$ and~$w'$ be~$\zeta$-regular elements in a complex reflection group~$W$.
  Then
\begin{enumerate}[(i)]
\item $w$ and~$w'$ are~$W$-conjugate; \label{eq:springer1}
\item the eigenvalues of~$w$ (and of~$w'$) are given by $\zeta^{-m_1},\ldots,\zeta^{-m_n}$, where~$m_1,\dots, m_n$ are the exponents of~$W$.\label{eq:springer2}
\end{enumerate}
\end{theorem}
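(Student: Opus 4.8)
The plan is to prove the two assertions independently, since each admits a clean, self-contained geometric argument (this differs from Springer's original route, which deduces the eigenvalues from a Molien-series analysis). For part~\eqref{eq:springer2}, I would differentiate the basic invariants. Write $\Sym(V^*)^W = \CC[f_1,\dots,f_n]$ with $f_i$ homogeneous of degree $d_i$, and regard the differential $x \mapsto (df_i)_x$ as a $W$-equivariant polynomial map $V \to V^*$, homogeneous of degree $m_i = d_i-1$, where $V^*$ carries the contragredient action. Invariance $f_i(wx)=f_i(x)$ yields the equivariance $(df_i)_{wx} = w\cdot(df_i)_x$; evaluating at a $\zeta$-regular eigenvector $v$ (so $wv=\zeta v$) and using homogeneity gives $w\cdot(df_i)_v = \zeta^{m_i}(df_i)_v$, so each $(df_i)_v$ is an eigenvector of $w$ acting on $V^*$ with eigenvalue $\zeta^{m_i}$. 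Since $v$ is regular, the Jacobian $\det[(df_i)_v]$ — equal to a nonzero scalar times $\prod_H \alpha_H^{e_H-1}$, hence vanishing exactly on the reflecting hyperplanes — is nonzero, so the $(df_i)_v$ form a basis of $V^*$. Thus $w$ has eigenvalues $\zeta^{m_1},\dots,\zeta^{m_n}$ on $V^*$, and therefore $\zeta^{-m_1},\dots,\zeta^{-m_n}$ on $V$, which is~\eqref{eq:springer2}.

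For part~\eqref{eq:springer1} I would work with the quotient map $\pi = (f_1,\dots,f_n)\colon V \to V/W \cong \CC^n$. Let $d$ be the order of $\zeta$ and set $a = \#\{i : d \mid d_i\}$. The scalar $\zeta$ acts on $V/W$ by $y_i \mapsto \zeta^{d_i} y_i$, and any $\zeta$-regular eigenvector $v$ satisfies $\pi(v) = \pi(wv) = \pi(\zeta v)$, so $\pi(v)$ lies in the coordinate subspace $Y_\zeta = \{y : y_i = 0 \text{ whenever } d\nmid d_i\} \cong \CC^{a}$. Writing $Y_\zeta^{\mathrm{reg}}$ for the complement in $Y_\zeta$ of the zero locus of the discriminant (equivalently, the image of the regular vectors), I would show that the set $\mathcal{V}_\zeta$ of regular $\zeta$-eigenvectors of elements of $W$ is exactly $\pi^{-1}(Y_\zeta^{\mathrm{reg}})$: given a regular $v_0$ with $\pi(v_0)\in Y_\zeta^{\mathrm{reg}}$, the identity $\pi(\zeta v_0)=\pi(v_0)$ forces $\zeta v_0 = g v_0$ for some $g\in W$, so $v_0$ is a $\zeta$-eigenvector of $g$. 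As $W$ acts freely on regular vectors, this $g$ is unique, yielding a well-defined, locally constant, $W$-equivariant map $v\mapsto g(v)$ satisfying $g(hv)=hg(v)h^{-1}$.

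The conjugacy then follows by a connectedness argument: $\pi\colon \mathcal{V}_\zeta \to Y_\zeta^{\mathrm{reg}}$ is a principal $W$-bundle (a covering, since the free action of the finite group $W$ on regular vectors is properly discontinuous), and $Y_\zeta^{\mathrm{reg}}$ — the complement of a proper hypersurface in the irreducible affine space $Y_\zeta$ — is connected. Over a connected base the structure group $W$ permutes the connected components of the total space transitively; since $v\mapsto g(v)$ is constant on each component and $W$-equivariant, all of its values are $W$-conjugate. As both $w$ and $w'$ arise as $g(v)$ for suitable regular eigenvectors $v\in\mathcal{V}_\zeta$, they are $W$-conjugate, proving~\eqref{eq:springer1}.

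I expect the main obstacle to be guaranteeing that $Y_\zeta^{\mathrm{reg}}$ is nonempty, i.e.\ that $Y_\zeta$ actually meets the regular locus — equivalently, that a regular $\zeta$-eigenvector exists at all. This is the delicate existence half of Springer's theory, governed by $\zeta$ (equivalently $d$) being a regular number; in the present statement it is supplied for free by the hypothesis that $w,w'$ are $\zeta$-regular. A secondary technical point is the precise identification of $Y_\zeta$ as a coordinate affine subspace of $V/W\cong\CC^n$ and of $Y_\zeta^{\mathrm{reg}}$ as the complement of the restricted discriminant; both rest on the Chevalley–Shephard–Todd theorem and the factorization of the Jacobian, and may be quoted as standard.
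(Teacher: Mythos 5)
Your proof is correct, but note that the paper itself contains no argument for this statement: it is imported as background, with a pointer to Springer \cite[Theorem 4.2 (iv),(v)]{Spr1974}, so the only meaningful comparison is with Springer's classical proof --- which your argument largely reconstructs. Your part~(ii) (differentiating the basic invariants, noting that invariance plus homogeneity forces $w\cdot (df_i)_v=\zeta^{m_i}(df_i)_v$ on $V^*$, and invoking the Jacobian factorization $\det[\partial f_i/\partial x_j]\doteq \prod_H \alpha_H^{e_H-1}$ to see that the $(df_i)_v$ form a basis at a regular point) is exactly the standard proof, as found in Springer and in Lehrer--Taylor \cite{LehrerTaylor}; whether Springer's own deduction went through Molien series is immaterial to correctness. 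Your part~(i) is a clean repackaging of Springer's transitivity argument (his result that maximal $\zeta$-eigenspaces are $W$-conjugate): instead of comparing eigenspace dimensions, you restrict the principal $W$-bundle $V^{\mathrm{reg}}\to V^{\mathrm{reg}}/W$ over the coordinate subspace $Y_\zeta=\{y: y_i=0 \text{ whenever } d\nmid d_i\}$ and use connectedness of $Y_\zeta^{\mathrm{reg}}$ together with the equivariance $g(hv)=hg(v)h^{-1}$ of the labeling $v\mapsto g(v)$. The two delicate points are precisely the ones you flag, and both are handled correctly: nonemptiness of $Y_\zeta^{\mathrm{reg}}$ comes for free from the hypothesis that a $\zeta$-regular element exists (so you never need the hard existence half of Springer's theory, governed by regular numbers), and local constancy of $v\mapsto g(v)$ holds because $\mathcal{V}_\zeta$ is the finite disjoint union of the sets $\ker(g-\zeta\,\one)\cap V^{\mathrm{reg}}$, each relatively clopen in $\mathcal{V}_\zeta$ by freeness of the $W$-action on regular vectors (Steinberg's theorem, which your argument, like Springer's, must quote). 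What your route buys is a self-contained treatment resting only on Chevalley--Shephard--Todd, Steinberg's freeness theorem, and the Jacobian factorization, and it makes visible that the two assertions of the theorem are logically independent; what the citation buys the paper is brevity, since nothing in the paper's later arguments depends on how this theorem is proved.
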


\subsection{Galois theory}
\label{ssec:extension}

We recall the following property.

\begin{proposition}
  \label{Galois-theory-prop}
  Given a field~$K$ which is a normal extension of its prime field $k_0$, and any subfield $k \subseteq K$, every field automorphism of~$k$ extends to a field automorphism of~$K$.
\end{proposition}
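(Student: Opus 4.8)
The plan is to prove Proposition~\ref{Galois-theory-prop} using the standard extension theorem for isomorphisms between subfields, combined with the normality hypothesis on $K$. First I would set up the situation: let $k_0$ denote the prime field, $k \subseteq K$ an intermediate subfield, and let $\sigma \colon k \to k$ be a field automorphism. Since $k_0$ is the prime field, $\sigma$ necessarily fixes $k_0$ pointwise (every field automorphism fixes the prime field, as it is generated by $1$). Thus $\sigma$ is a $k_0$-automorphism of $k$, and in particular an embedding of $k$ into the algebraically closed field containing $K$ (or into $K$ itself once we argue that the image lands back in $K$).

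The key step is to invoke the classical isomorphism extension theorem from Galois theory: any isomorphism $\sigma \colon k \to k'$ between subfields of an algebraic closure extends to an embedding of any algebraic extension of $k$ into the algebraic closure of $k'$. Applying this with $k' = k$ (so $\sigma$ is an automorphism of $k$), and taking $K$ as an algebraic extension of $k$, I obtain an embedding $\tilde{\sigma} \colon K \to \overline{K}$ extending $\sigma$, where $\overline{K}$ is an algebraic closure of $K$. The remaining task is to show that $\tilde{\sigma}$ actually maps $K$ into itself (hence is an automorphism of $K$), and this is precisely where the \emph{normality} hypothesis enters.

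The hard part will be exploiting normality correctly. Since $K$ is a normal extension of $k_0$, and $\tilde{\sigma}$ fixes $k_0$ pointwise (it extends $\sigma$, which fixes $k_0$), every element $\alpha \in K$ has a minimal polynomial over $k_0$ that splits completely in $K$. The image $\tilde{\sigma}(\alpha)$ is a root of that same minimal polynomial, hence also lies in $K$. Therefore $\tilde{\sigma}(K) \subseteq K$. Because $\tilde{\sigma}$ is an injective field homomorphism restricting to the automorphism $\sigma$ on $k$, and $K/k$ is algebraic, $\tilde{\sigma}$ is surjective onto $K$ as well (an injective endomorphism of an algebraic extension that fixes the base is automatically surjective, since it permutes the finite set of roots of each minimal polynomial). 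Thus $\tilde{\sigma}$ is the desired automorphism of $K$ extending $\sigma$.

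I would present the argument cleanly by first reducing to the observation that $\sigma$ fixes $k_0$, then citing the isomorphism extension theorem to get an embedding $\tilde{\sigma} \colon K \to \overline{K}$, and finally using normality of $K/k_0$ to confine the image to $K$ and conclude surjectivity. The main subtlety to flag is that normality is measured relative to the prime field $k_0$ (not relative to $k$), which is exactly what guarantees that $\tilde{\sigma}$, fixing only $k_0$, cannot push any element of $K$ outside $K$.
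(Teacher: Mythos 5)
Your proposal is correct and follows essentially the same route as the paper's proof: both observe that the automorphism fixes the prime field $k_0$ pointwise, invoke the classical isomorphism extension theorem to extend it to (an embedding into, resp.\ an automorphism of) the algebraic closure $\bar{K}$, and then use normality of $K/k_0$ to conclude that the image of every element of $K$ is again a root in $K$ of its minimal polynomial over $k_0$, so the extension restricts to an automorphism of $K$. The only difference is cosmetic: the paper extends to a full element of $\Gal(\bar{K}/k_0)$ and leaves surjectivity implicit, while you extend to an embedding $K \to \bar{K}$ and spell out surjectivity via the permutation of the finite root sets, which is a fine (and slightly more careful) way to finish.
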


\begin{proof}
  Any automorphism $\gamma: k \to k$ fixes the prime field $k_0$ pointwise.
  Therefore one can use the usual isomorphism extension theorem to extend $\gamma$ to an element $\delta\in \Gal(\bar{K} / k_0 )$, where $\bar{K}$ denotes the algebraic closure of~$K$, which is also the algebraic closure of~$k$ (since~$K$ is normal and hence algebraic over $k_0$, one has that~$K$ is algebraic over~$k$).
  But then $\delta$ sends any element $z\in K$ to another root $\delta(z)$ of its minimal polynomial over~$k_0$, which still lies in~$K$ because $K / k_0$ is normal.
  Hence $\delta$ restricts to an automorphism $K \to K$, extending~$\gamma$.
\end{proof}

This implies two facts on the field of definition~$K_W$ to be used later.

\begin{corollary}
\label{Galois-corollary}
  Let~$W$ be a complex reflection group and~$\zeta$ be a root of unity.
  \begin{enumerate}[(i)]

    \item Every element of $\Gal(K_W/\QQ)$ extends to an element of $\Gal(K_W(\zeta)/\QQ)$. \label{it:extension1}

    \item Every element of $\Gal(\QQ(\zeta)/\QQ)$ extends to an element of $\Gal(K_W(\zeta)/\QQ)$.
      \label{it:extension2}

  \end{enumerate}
\end{corollary}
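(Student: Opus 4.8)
The plan is to deduce both statements as direct instances of Proposition~\ref{Galois-theory-prop}, taking $K = K_W(\zeta)$ and the prime field $k_0 = \QQ$, and choosing the subfield $k$ to be $K_W$ for part~\eqref{it:extension1} and $\QQ(\zeta)$ for part~\eqref{it:extension2}. Before I can invoke that proposition, the one hypothesis I must verify is that $K_W(\zeta)$ is a normal extension of~$\QQ$. Granting this, part~\eqref{it:extension1} says that every field automorphism of $K_W$ extends to a field automorphism of $K_W(\zeta)$, and part~\eqref{it:extension2} says the same for every field automorphism of $\QQ(\zeta)$; so the two statements fall out immediately from the two choices of~$k$.

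First I would record that a field automorphism of an algebraic extension of $\QQ$ automatically fixes $\QQ$ pointwise, so that the automorphism groups appearing in the statement are genuinely the Galois groups: $\Aut(K_W) = \Gal(K_W/\QQ)$, $\Aut(\QQ(\zeta)) = \Gal(\QQ(\zeta)/\QQ)$, and likewise the extended automorphism produced by Proposition~\ref{Galois-theory-prop} lies in $\Gal(K_W(\zeta)/\QQ)$. This is what translates the conclusion ``extends to a field automorphism of~$K$'' into ``extends to an element of $\Gal(K_W(\zeta)/\QQ)$''.

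The substantive point, and the only step requiring an argument, is the normality of $K_W(\zeta)$ over~$\QQ$. The key observation is that every trace $\tr_V(w)$ for $w \in W$ is a sum of eigenvalues of~$w$, each of which is a root of unity since $W$ is finite; hence $K_W$ is contained in the cyclotomic field $\QQ(\omega)$, where $\omega$ is a primitive $N$-th root of unity and $N$ is the exponent of~$W$. Adjoining~$\zeta$ (itself a root of unity) keeps us inside a cyclotomic field, so $K_W(\zeta) \subseteq \QQ(\omega')$ for a suitable primitive root of unity~$\omega'$. Since $\QQ(\omega')/\QQ$ is abelian, every intermediate field is normal (indeed abelian) over~$\QQ$; in particular $K_W(\zeta)/\QQ$ is normal. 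I expect this cyclotomic embedding to be the only place calling for care; once it is in hand, both extension statements are formal consequences of Proposition~\ref{Galois-theory-prop}.
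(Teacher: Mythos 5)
Your proof is correct and follows essentially the same route as the paper: embed $K_W(\zeta)$ in a cyclotomic field to conclude it is an abelian, hence normal (Galois), extension of $\QQ$, and then apply Proposition~\ref{Galois-theory-prop} with $k = K_W$ and $k = \QQ(\zeta)$ respectively. If anything, your justification of normality is slightly more careful than the paper's (which loosely asserts that $K_W$ is \emph{generated} by roots of unity, whereas it is only \emph{contained} in a cyclotomic field, as you correctly argue via intermediate fields of abelian extensions being normal).
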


\begin{proof}
  Since~$W$ is a finite group, the field of definition $K_W$, and thus $K_W(\zeta)$ as well, are generated over $\QQ$ by a finite number of roots of unity. 
  So $K_W(\zeta)$ is an abelian, hence Galois, extension of $\QQ$. 
  Thus we can apply Proposition~\ref{Galois-theory-prop} to $K_W(\zeta)$, and assertions~\eqref{it:extension1} and~\eqref{it:extension2} follow.
\end{proof}

\subsection{Images of Coxeter elements by reflection automorphisms}
\label{ssec:image}

To prove that the action in Proposition~\ref{prop:refl-autom} is well-defined, we have to show that for any Coxeter element~$c \in W$ and any reflection automorphism~$\psi \in \Aut_R(W)$, the element~$\psi(c) \in W$ is again a Coxeter element.
By the equivalence \eqref{def1}$\Leftrightarrow$\eqref{def2} of Theorem~\ref{thm:main}, it is enough to check that~$\psi(c)$ has an eigenvalue of order~$h$.
From Proposition~\ref{prop:marinmichel}, we know that~$\psi$ is a Galois automorphism attached to some $\gamma\in \Gamma_W$.
Moreover, from the construction of Galois automorphisms in Section~\ref{sec:intro},~$\psi(c)$ and~$\bar{\gamma}(c)$ are $\GL_n(\CC)$-conjugate, so we just need to check that~$\bar{\gamma}(c)$ has an eigenvalue of order~$h$.

Let~$\zeta$ be an eigenvalue of order~$h$ for~$c$; then all eigenvalues of~$c$ live in $\QQ(\zeta)$, by Theorem~\ref{Springer-theory-facts}\eqref{eq:springer2}.
It now follows from Corollary~\ref{Galois-corollary}\eqref{it:extension1} that we can extend the field automorphism~$\gamma \in \Gal(K_W / \QQ)$ to an automorphism~$\delta \in \Gal(K_W(\zeta) /\QQ)$.
Note that $\delta(\zeta)$ must be another root of the cyclotomic polynomial $\Phi_h$, that is,
$\delta(\zeta)=\zeta^p$ for some~$p$ coprime to~$h$.
But the eigenvalues of~$\bar{\gamma}(c)$ are simply the images by~$\delta$ of the ones of~$c$. 
So $\delta(\zeta)=\zeta^p$ is an eigenvalue of $\bar{\gamma}(c)$, which concludes the proof.

\subsection{Transitivity of reflection automorphisms on Coxeter elements}
\label{ssec:trans}

To complete the proof of Proposition~\ref{prop:refl-autom}, it remains to prove that for any two Coxeter elements~$c$ and~$c'$, there is a reflection automorphism~$\psi$ such that $\psi(c) = c'$.
Assume that the Coxeter elements~$c$ and~$c'$ have regular eigenvalues~$\zeta$ and $\zeta'= \zeta^p$, respectively, for some~$p$ coprime to~$h$.
Denote by~$K$ the field $K_W(\zeta)$.
Corollary~\ref{Galois-corollary}\eqref{it:extension2} implies that there is an automorphism~$\delta \in \Gal(K/\QQ)$  extending\footnote{One can observe on the classification (see e.g.~\cite[Theorem~7.1]{malle}) that for any well-generated group, the field~$\QQ(\zeta)$ already contains~$K_W$; but we do not need this here.} the automorphism of $\Gal(\QQ(\zeta)/\QQ)$ that sends~$\zeta$ to $\zeta'= \zeta^p$.

Let us denote by $\bar{\delta}$ the automorphism of $\GL_n(K)$ associated to~$\delta$ (defined by Galois conjugation of the matrix entries).
Since $W\subseteq \GL_n(K)$, we obtain a (potentially) new reflection group $\bar{\delta}(W)$. But by Corollary~\ref{cor:galoisconj}, $\bar{\delta}(W)$ is conjugate to~$W$ in $\GL_n(\CC)$.
Let $g \in \GL_n(\CC)$ be such that $\bar{\delta}(W)=gWg^{-1}$.
As $c'$ and $g^{-1}\bar{\delta}(c)g$ are both $\zeta'$-regular elements of~$W$, Springer's Theorem~\ref{Springer-theory-facts}\eqref{eq:springer1} says that they are conjugate in~$W$, say $c' = w^{-1}g^{-1}\bar{\delta}(c)gw$ for some~$w$ in~$W$.
Then the map
\[
  \begin{array}{lrcl}
    \psi: & \GL_n(\CC) & \to & \hspace*{15pt} \GL_n(\CC) \\[7pt]
          & u\hspace*{15pt} & \mapsto & w^{-1}g^{-1}\bar{\delta}(u)gw
  \end{array}
\]
preserves~$W$, and therefore yields an automorphism of~$W$.
Moreover, we have $\psi(c)=c'$ and~$\psi$ is a reflection automorphism since the maps
$\bar{\delta}, g^{-1}(-)g, w^{-1}(-)w$ all send reflections to reflections.

\section{Coxeter elements and Galois automorphisms}
\label{sec:galois}

In this section, we prove Theorem~\ref{thm:galois} concerning the Galois action on conjugacy classes. 
It follows from Proposition~\ref{prop:marinmichel} that one has an action of $\Gamma_W=\Gal(K_W/\QQ)$ on the set of conjugacy classes of regular elements in~$W$.
By Proposition~\ref{prop:refl-autom}, this action preserves the set~$\Cox(W)$ of conjugacy classes of Coxeter elements.
Moreover, the action by reflection automorphisms is transitive on Coxeter elements.
Since any reflection automorphism is a Galois automorphism (Proposition~\ref{prop:marinmichel}), this also implies that the action of~$\Gamma_W$ on~$\Cox(W)$ is transitive. 
We state in the following proposition several properties that are equivalent to the freeness of this action. 
Recall that~$G_W$ denotes the setwise stabilizer of $\big\{\zeta^{m_1},\ldots,\zeta^{m_n}\big\}$ in the Galois group $\Gal(\QQ(\zeta)/\QQ)$, where~$\zeta$ is a primitive~$h$-th root of unity and $m_1,\dots, m_n$ are the exponents of~$W$.

\begin{proposition}
\label{prop:equiv}
  Let~$W$ be an irreducible well-generated group with Coxeter number~$h$, and let~$\zeta$ be a primitive~$h$-th root of unity.
  The following statements are equivalent.
  \begin{enumerate}[(i)]
  \item the action of\/~$\Gamma_W$ on\/ $\Cox(W)$ is free (thus simply transitive); \label{eq:freeaction}
  \item $[K_W : \QQ]= \varphi(h) / \varphi_W(h)$; \label{eq:degreeoffieldextension}
  \item $K_W=\QQ(\zeta)^{G_W}$; \label{eq:invariantsubfield}
  \item $K_W$ is generated by the coefficients of the characteristic polynomial of any Coxeter element of\/~$W$. \label{eq:generatedbycoefficients}
  \end{enumerate}
\end{proposition}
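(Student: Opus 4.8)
The plan is to establish the cycle by first proving the two ``soft'' equivalences $\eqref{eq:freeaction}\Leftrightarrow\eqref{eq:degreeoffieldextension}$ and $\eqref{eq:invariantsubfield}\Leftrightarrow\eqref{eq:generatedbycoefficients}$ essentially for free, and then bridging the two pairs through the single combinatorial identity $|G_W| = \varphi_W(h)$, which I expect to be the crux.

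For $\eqref{eq:freeaction}\Leftrightarrow\eqref{eq:degreeoffieldextension}$ I would argue as follows. We already know that $\Gamma_W$ acts transitively on $\Cox(W)$, combining Proposition~\ref{prop:refl-autom} with Proposition~\ref{prop:marinmichel}. For a transitive action of the finite group $\Gamma_W$, the orbit--stabilizer relation shows that the action is free (hence simply transitive) if and only if $|\Cox(W)| = |\Gamma_W| = [K_W:\QQ]$. Since $|\Cox(W)| = \varphi(h)/\varphi_W(h)$ by Lemma~\ref{lem:nccox}, this is precisely statement~\eqref{eq:degreeoffieldextension}.

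Next I would read off both $\eqref{eq:invariantsubfield}\Leftrightarrow\eqref{eq:generatedbycoefficients}$ and the containment $\QQ(\zeta)^{G_W}\subseteq K_W$ from the characteristic polynomial. If $c$ is a Coxeter element, say $\zeta^p$-regular with $p$ coprime to $h$, then by Theorem~\ref{Springer-theory-facts}\eqref{eq:springer2} its eigenvalues are $\zeta^{-pm_1},\dots,\zeta^{-pm_n}$; the coefficients of $\charpol_c$ are their elementary symmetric functions and hence lie in $\QQ(\zeta)$. As $\QQ(\zeta)/\QQ$ is abelian, the subfield they generate is the fixed field of the setwise stabilizer of the multiset $\{\zeta^{-pm_i}\}$ in $\Gal(\QQ(\zeta)/\QQ)$. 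A one-line check shows that multiplication by $a$ fixes $\{-pm_i\}$ modulo $h$ if and only if it fixes $\{m_i\}$ modulo $h$ (negation and scaling by $p$ commute with multiplication by $a$), so this stabilizer equals $G_W$ for every $p$. Thus the coefficients of $\charpol_c$ generate $\QQ(\zeta)^{G_W}$ for \emph{any} Coxeter element $c$; since they manifestly lie in $K_W$ (being polynomials in the matrix entries of $c$), we obtain $\QQ(\zeta)^{G_W}\subseteq K_W$, and statement~\eqref{eq:generatedbycoefficients} becomes literally $K_W = \QQ(\zeta)^{G_W}$, i.e.\ \eqref{eq:invariantsubfield}.

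The remaining task, and the main obstacle, is to bridge the two pairs through $|G_W| = \varphi_W(h)$, equivalently $[\QQ(\zeta)^{G_W}:\QQ] = \varphi(h)/\varphi_W(h)$. I would derive this case-freely from Springer's theory. A $\zeta$-regular Coxeter element $w$ is in fact regular for each of its eigenvalues of order $h$ (this is the mechanism underlying Lemma~\ref{lem:nccox}); applying Theorem~\ref{Springer-theory-facts}\eqref{eq:springer2} to $w$ viewed as $\zeta^{-m_i}$-regular, for each exponent $m_i$ coprime to $h$, and comparing with its eigenvalues $\zeta^{-m_k}$, yields the multiset identity $m_i\{m_1,\dots,m_n\} = -\{m_1,\dots,m_n\}$ in $\mathbb{Z}/h$. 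Writing $M = \{m_1,\dots,m_n\}$ and $M^\times$ for the exponents coprime to $h$, this gives $-M^\times\subseteq G_W$ and, using $m_n \equiv -1$, also $G_W\subseteq -M^\times$; hence $G_W = -M^\times$ and $|G_W| = \varphi_W(h)$. Granting this, the containment $\QQ(\zeta)^{G_W}\subseteq K_W$ together with $[\QQ(\zeta)^{G_W}:\QQ] = \varphi(h)/|G_W| = \varphi(h)/\varphi_W(h)$ shows that \eqref{eq:invariantsubfield} holds if and only if $[K_W:\QQ] = \varphi(h)/\varphi_W(h)$, namely \eqref{eq:degreeoffieldextension}. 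This closes the cycle.
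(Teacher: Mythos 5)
Your proof is correct, and its overall architecture coincides with the paper's: \eqref{eq:freeaction}$\Leftrightarrow$\eqref{eq:degreeoffieldextension} via transitivity plus orbit--stabilizer and Lemma~\ref{lem:nccox}; \eqref{eq:invariantsubfield}$\Leftrightarrow$\eqref{eq:generatedbycoefficients} together with the containment $\QQ(\zeta)^{G_W}\subseteq K_W$ read off from the characteristic polynomial (this is the paper's Lemma~\ref{lem:zeta} --- the paper reduces to a $\zeta$-regular Coxeter element using that $G_W$ is independent of the choice of primitive root, while you compute directly with a $\zeta^p$-regular one; both work); and finally the bridge $|G_W|=\varphi_W(h)$, assembled into the cycle exactly as in the paper. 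The one place you genuinely diverge is the proof of that bridge, which is the paper's Lemma~\ref{lem:exponents}. The paper decides when $\zeta\mapsto\zeta^k$ lies in $G_W$ through the chain ``$c$ and $c^k$ have the same eigenvalues $\Leftrightarrow$ they are conjugate $\Leftrightarrow$ $c$ is $\zeta^k$-regular $\Leftrightarrow$ $k\equiv -m_i$'', resting on Springer's conjugacy statement, Theorem~\ref{Springer-theory-facts}\eqref{eq:springer1}; you instead apply Theorem~\ref{Springer-theory-facts}\eqref{eq:springer2} twice to a single element (once as $\zeta$-regular, once as $\zeta^{-m_i}$-regular) to obtain the multiset identity $m_i\{m_1,\dots,m_n\}=-\{m_1,\dots,m_n\}$ in $\mathbb{Z}/h\mathbb{Z}$, and then exploit $m_n\equiv -1$ to get the reverse inclusion $G_W\subseteq -M^\times$. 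The two routes are of comparable length and rest on the same nontrivial input, namely the strong form of \eqref{def2}$\Rightarrow$\eqref{def1} in Theorem~\ref{thm:main} (an eigenvalue of order $h$ forces regularity \emph{for that very eigenvalue}), which you correctly identify as the mechanism behind Lemma~\ref{lem:nccox}; the paper needs it in the step ``$\zeta^k=\zeta^{-m_i}$ for some $i$ implies $c$ is $\zeta^k$-regular''. One small point that you and the paper both leave implicit: passing from $G_W=-M^\times$ to $|G_W|=\varphi_W(h)$ requires the exponents coprime to $h$ to be pairwise distinct modulo $h$; this does hold --- an order-$h$ eigenvalue of a Coxeter element has a one-dimensional eigenspace, again by the strong regularity statement --- but it is an extra fact hidden in both your argument and the paper's.
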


The proof of Proposition~\ref{prop:equiv} will follow from the three lemmas below, which are easy consequences of Springer's Theorem. 

\begin{lemma}
  \label{lem:nccox}
  With the same notation as above, the conjugacy classes of Coxeter elements in~$W$ are counted by
\[\left | \Cox(W) \right| = \frac{\varphi(h)}{\varphi_W(h)}.\]
\end{lemma}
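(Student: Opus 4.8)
The plan is to realize $\Cox(W)$ as the set of fibers of a natural surjection from the primitive $h$-th roots of unity, and to show that every fiber has the same size $\varphi_W(h)$.

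First I would set up the surjection. Write $\mu$ for the set of primitive $h$-th roots of unity, a set of cardinality $\varphi(h)$. For each $\eta \in \mu$, Theorem~\ref{Springer-theory-facts}\eqref{eq:springer1} guarantees that the $\eta$-regular elements form a single $W$-conjugacy class, which I denote $f(\eta)\in\Cox(W)$; this defines a map $f\colon \mu \to \Cox(W)$. Conversely, any Coxeter element $w$ is $\eta$-regular for $\eta$ its regular eigenvalue, and since the order of $w$ equals the order of $\eta$ (so that $\eta$ is a primitive $h$-th root), one has $[w]=f(\eta)$; hence $f$ is surjective. Next I would identify the fibers: for a Coxeter element $w$, an element $\eta\in\mu$ satisfies $f(\eta)=[w]$ exactly when $w$ itself is $\eta$-regular, that is, when $\eta$ is a regular eigenvalue of $w$. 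Thus $f^{-1}([w])$ is precisely the set of regular eigenvalues of $w$ (all of which automatically lie in $\mu$), and since conjugate elements share the same set of regular eigenvalues, everything reduces to the assertion that a Coxeter element has exactly $\varphi_W(h)$ regular eigenvalues.

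To prove this last point I would combine the eigenvalue description with Springer's bound on eigenspace dimensions. By Theorem~\ref{Springer-theory-facts}\eqref{eq:springer2} the eigenvalues of $w$ are $\zeta^{-m_1},\dots,\zeta^{-m_n}$, so those that are primitive $h$-th roots are precisely the $\zeta^{-m_i}$ with $m_i$ coprime to $h$, contributing total multiplicity $\varphi_W(h)$. Here I would invoke Springer's theorem that for any root of unity $\eta$ of order $e$ one has $\dim\ker(w-\eta)\le a(e)$, with equality if and only if the $\eta$-eigenspace contains a regular vector, where $a(e)$ is the number of degrees divisible by $e$. Since $h=d_n$ is the unique degree divisible by $h$ in an irreducible group, $a(h)=1$. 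Consequently every primitive $h$-th root eigenvalue of $w$ has a one-dimensional eigenspace; this forces the exponents $m_i$ coprime to $h$ to be pairwise distinct modulo $h$ (a coincidence would produce a two-dimensional eigenspace, contradicting $a(h)=1$), so there are exactly $\varphi_W(h)$ distinct such eigenvalues, and the equality $\dim=1=a(h)$ shows that each of their eigenspaces meets the regular locus, hence is a regular eigenvalue. Therefore $w$ has exactly $\varphi_W(h)$ regular eigenvalues, every fiber of $f$ has cardinality $\varphi_W(h)$, and since the fibers partition $\mu$ I conclude $|\Cox(W)|=|\mu|/\varphi_W(h)=\varphi(h)/\varphi_W(h)$.

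I expect the main obstacle to be the counting in the previous paragraph, specifically the justification that every primitive $h$-th root eigenvalue of a Coxeter element is simultaneously simple and regular. This is where the argument genuinely needs Springer's sharp inequality $\dim\ker(w-\eta)\le a(e)$ together with its equality criterion, rather than only the two facts recorded in Theorem~\ref{Springer-theory-facts}; the input $a(h)=1$, i.e. the uniqueness of the top degree for an irreducible group, is equally essential and should be stated explicitly. I emphasize that one must \emph{not} attempt to identify $f^{-1}([w])$ with the full stabilizer $G_W$: that identification amounts to the freeness statement of Theorem~\ref{thm:galois}\eqref{eq:freeaction0} and requires the case-by-case analysis, whereas the count of regular eigenvalues above is case-free and gives only the potentially smaller value $\varphi_W(h)$.
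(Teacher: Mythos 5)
Your proof is correct and takes essentially the same route as the paper: both arguments partition the set of primitive $h$-th roots of unity according to the unique conjugacy class of regular elements attached to each root (Theorem~\ref{Springer-theory-facts}\eqref{eq:springer1}), and then use the eigenvalue description of Theorem~\ref{Springer-theory-facts}\eqref{eq:springer2} to see that each class accounts for exactly $\varphi_W(h)$ such roots. The one point where you go beyond the paper is in explicitly justifying, via the eigenspace bound $\dim\ker(c-\eta)\le a(h)=1$ and its equality criterion, that every order-$h$ eigenvalue of a Coxeter element is a \emph{regular} eigenvalue --- a step the paper subsumes into its appeal to Springer's theorem --- and this supplement is sound, with the small caveat that $a(h)=1$ relies on well-generation (it can fail for irreducible badly-generated groups), not on irreducibility alone.
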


\begin{proof}
  Denote by~$\mu_h'$ the set of primitive~$h$-th roots of unity.
  It follows from Springer's Theorem~\ref{Springer-theory-facts}\eqref{eq:springer1} that any element of~$\mu_h'$ appears as an eigenvalue of exactly one conjugacy class of Coxeter elements.
  So one has
  \begin{align}
    \mu_h'=\bigsqcup_{C\in \Cox(W)} \Sp(C)\cap\mu_h' \label{eq:mu}
  \end{align}
  where~$\Sp(C)$ denotes the set of eigenvalues of any element in the conjugacy class~$C$.
  By Springer's Theorem~\ref{Springer-theory-facts}\eqref{eq:springer2}, the eigenvalues of a~$\zeta$-regular element are $\zeta^{-m_1},\dots, \zeta^{-m_n}$ where $m_1,\dots, m_n$ are the exponents of~$W$.
  Thus for any $C\in \Cox(W)$,
  \[
    \big|\Sp(C)\cap\mu_h'\big|=\Big|\big\{m_i \,|\, i\in\{1,\dots,n\}\text{ and } \gcd(m_i,h) =1 \big\}\Big| =\varphi_W(h).
  \]
  Using Equation~\eqref{eq:mu}, we get the equality $\big|\Cox(W) \big|=\varphi(h)/\varphi_W(h)$.
\end{proof}

\begin{lemma}
  \label{lem:zeta}
  Let~$c$ be a Coxeter element of~$W$ and let~$\zeta$ be a primitive~$h$-th root of unity.
  Then~$\QQ(\zeta)^{G_W}$ is generated by the coefficients of the characteristic polynomial of~$c$.
  In particular,~$\QQ(\zeta)^{G_W}$ is contained in the field of definition~$K_W$.
\end{lemma}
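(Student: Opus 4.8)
The plan is to write down the characteristic polynomial of $c$ explicitly from Springer's description of its eigenvalues, and then to identify the field generated by its coefficients via the Galois correspondence for the cyclotomic (hence abelian) extension $\QQ(\zeta)/\QQ$. First I would use that a Coxeter element $c$ is $\xi$-regular for some primitive $h$-th root of unity $\xi$. Writing $\xi=\zeta^{a}$ with $\gcd(a,h)=1$, Theorem~\ref{Springer-theory-facts}\eqref{eq:springer2} gives that the multiset of eigenvalues of $c$ is $\{\zeta^{-am_1},\ldots,\zeta^{-am_n}\}$, so that
\[
  \charpol_c(T)=\prod_{i=1}^{n}\left(T-\zeta^{-am_i}\right).
\]
Its coefficients are, up to sign, the elementary symmetric functions of these eigenvalues, and hence lie in $\QQ(\zeta)$.

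Let $L$ denote the subfield of $\QQ(\zeta)$ they generate. Since $\QQ(\zeta)/\QQ$ is abelian, the fundamental theorem of Galois theory gives $L=\QQ(\zeta)^{H}$, where $H$ is the subgroup of $\Gamma:=\Gal(\QQ(\zeta)/\QQ)$ fixing every coefficient of $\charpol_c$. An automorphism $\sigma\in\Gamma$ fixes all these coefficients if and only if it fixes $\charpol_c$ coefficientwise, which happens exactly when $\sigma$ permutes the multiset of roots, i.e.\ when $\sigma$ stabilizes $\{\zeta^{-am_1},\ldots,\zeta^{-am_n}\}$. Thus $H$ is the (multiset) stabilizer of the eigenvalue multiset.

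The key step is then to identify $H$ with $G_W$. Here I would invoke the commutativity of $\Gamma$: the automorphism $\tau:\zeta\mapsto\zeta^{-a}$ lies in $\Gamma$ and carries the multiset $\{\zeta^{m_1},\ldots,\zeta^{m_n}\}$ to $\{\zeta^{-am_1},\ldots,\zeta^{-am_n}\}$, so the stabilizer $H$ of the latter equals $\tau G_W\tau^{-1}=G_W$, the last equality because $\Gamma$ is abelian. This yields $L=\QQ(\zeta)^{G_W}$, the first assertion; as a sanity check, the factor $\zeta^{-a}$ washes out, so $\QQ(\zeta)^{G_W}$ does not depend on the chosen primitive root $\zeta$ nor on the Coxeter element $c$. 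For the inclusion $\QQ(\zeta)^{G_W}\subseteq K_W$, I would note that by Newton's identities the coefficients of $\charpol_c$ are polynomials with rational coefficients in the power sums $\tr(c^k)$; since $c^k\in W$ and $K_W$ is by definition generated by the traces of elements of $W$, each $\tr(c^k)$ lies in $K_W$, whence $L=\QQ(\zeta)^{G_W}\subseteq K_W$.

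The main obstacle is precisely the identification $H=G_W$: one must keep track of the fact that $c$ is $\xi$-regular for \emph{some} primitive $h$-th root $\xi$, not necessarily for the fixed $\zeta$, so that the eigenvalue multiset is a $\Gamma$-translate of $\{\zeta^{m_1},\ldots,\zeta^{m_n}\}$ rather than that multiset itself; the translation by $\tau$ is harmless only because $\Gamma$ is abelian. One should also read $\{\zeta^{m_1},\ldots,\zeta^{m_n}\}$ as a multiset throughout, so that ``stabilize'' means preserving multiplicities, matching the count $\varphi_W(h)$ of exponents coprime to $h$.
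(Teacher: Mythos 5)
Your proof is correct and takes essentially the same route as the paper's: Springer's description of the eigenvalues, the Galois correspondence for the abelian extension $\QQ(\zeta)/\QQ$, and the abelianness of $\Gal(\QQ(\zeta)/\QQ)$ to eliminate the dependence on which primitive root $c$ is regular for --- the paper does this by choosing $\zeta$ at the outset so that $c$ is $\zeta$-regular (noting $G_W$ and $\QQ(\zeta)$ are independent of that choice), while you keep $\zeta$ fixed and conjugate the stabilizer by $\tau:\zeta\mapsto\zeta^{-a}$ at the end, which is the same argument. Your Newton-identities justification of the inclusion $\QQ(\zeta)^{G_W}\subseteq K_W$ and your explicit multiset reading of $G_W$ merely make explicit what the paper leaves implicit.
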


\begin{proof}
  The group~$G_W$ and the field~$\QQ(\zeta)$ do not depend on the choice of the primitive~$h$-th root of unity~$\zeta$, so we can assume that~$c$ is a~$\zeta$-regular element. 
  Then, from Springer's Theorem~\ref{Springer-theory-facts}\eqref{eq:springer2}, the eigenvalues of~$c$ are $\zeta^{-m_1}, \dots, \zeta^{-m_n}$, so the characteristic polynomial of~$c$ verifies
  \[
    \charpol_c(X)= (X-\zeta^{-m_1})(X-\zeta^{-m_2}) \dots (X-\zeta^{-m_n}) = X^n - e_1 X^{n-1} + \dots + (-1)^ne_n,
  \]
  where $e_1,\dots, e_n$ are the elementary symmetric functions on the~$\zeta^{-m_i}$'s.
  Thus an element $\gamma\in \Gal(\QQ(\zeta)/\QQ)$ stabilizes the set $\big\{\zeta^{m_1},\ldots,\zeta^{m_n}\big\}$ if and only if it fixes $e_1,\dots, e_n$. 
  So the group~$G_W$ is $\Gal(\QQ(\zeta)/\QQ(e_1,\dots, e_n))$ and thus $\QQ(\zeta)^{G_W}=\QQ(e_1,\dots, e_n)$.
\end{proof}

\begin{lemma}
  \label{lem:exponents}
  The group~$G_W$ consists of all the automorphisms defined by $\zeta\mapsto \zeta^{-m_i}$, where~$m_i$ is an exponent of~$W$ which is coprime to~$h$.
  In particular, $|G_W|=\varphi_W(h)$.
\end{lemma}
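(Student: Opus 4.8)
The plan is to pin down the setwise stabiliser $G_W$ of $\{\zeta^{m_1},\dots,\zeta^{m_n}\}$ inside $\Gal(\QQ(\zeta)/\QQ)\cong(\mathbb{Z}/h\mathbb{Z})^\times$ by proving two matching cardinality bounds. Throughout I write $\sigma_a$ for the automorphism $\zeta\mapsto\zeta^a$ (with $a$ coprime to $h$) and $E=\{m_1,\dots,m_n\}$ for the multiset of exponents read modulo $h$, so that by definition $\sigma_a\in G_W$ if and only if $aE=E$ as subsets of $\mathbb{Z}/h\mathbb{Z}$. I fix a $\zeta$-regular Coxeter element $c$; by Theorem~\ref{Springer-theory-facts}\eqref{eq:springer2} its eigenvalue multiset is exactly $\{\zeta^{-m_1},\dots,\zeta^{-m_n}\}$, so the primitive $h$-th roots occurring among the eigenvalues of $c$ are the $\zeta^{-m_i}$ with $\gcd(m_i,h)=1$, and there are $\varphi_W(h)$ of them by Lemma~\ref{lem:nccox}.

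The heart of the argument is the inclusion $\{\sigma_{-m_i}:\gcd(m_i,h)=1\}\subseteq G_W$. I would fix an exponent $m_i$ coprime to $h$ and set $b=-m_i\bmod h$, again coprime to $h$. Then $\zeta^b=\zeta^{-m_i}$ is a primitive $h$-th root of unity that occurs as an eigenvalue of $c$, and it is simultaneously the regular eigenvalue of the Coxeter element $c^b$. Now I invoke the key fact already packaged in the proof of Lemma~\ref{lem:nccox}, namely the disjoint union~\eqref{eq:mu}, which says that a primitive $h$-th root of unity lies in the spectrum of exactly one conjugacy class of Coxeter elements: since $\zeta^b$ is an eigenvalue of both $c$ and $c^b$, these two Coxeter elements are $W$-conjugate. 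Conjugate elements share the same characteristic polynomial, hence the same eigenvalue multiset; as $c$ is diagonalisable the eigenvalues of $c^b$ are the $b$-th powers $\{\zeta^{-bm_j}\}_j$ of those of $c$, so I obtain $\{\zeta^{-bm_j}\}_j=\{\zeta^{-m_j}\}_j$, that is $\{bm_j\}=\{m_j\}$ in $\mathbb{Z}/h\mathbb{Z}$, which is precisely $\sigma_b=\sigma_{-m_i}\in G_W$. Distinct coprime exponents give distinct residues $-m_i\bmod h$ and hence distinct automorphisms, so this already yields $|G_W|\geq\varphi_W(h)$.

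For the reverse bound I would argue purely group-theoretically. Any $\sigma_a\in G_W$ preserves $E$, and since multiplication by the unit $a$ preserves coprimality with $h$, it also preserves the coprime part $\bar E^\times=\{m_i\bmod h:\gcd(m_i,h)=1\}\subseteq(\mathbb{Z}/h\mathbb{Z})^\times$, a set of cardinality $\varphi_W(h)$ and nonempty because $c$ has order $h$. As $(\mathbb{Z}/h\mathbb{Z})^\times$ acts on itself by multiplication freely, the restricted action of $G_W$ on $\bar E^\times$ is free, so fixing any $m_0\in\bar E^\times$ the orbit map $\sigma_a\mapsto a\,m_0$ embeds $G_W$ into $\bar E^\times$ and gives $|G_W|\leq\varphi_W(h)$. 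Combining the two inequalities forces $|G_W|=\varphi_W(h)$, and since the $\varphi_W(h)$ automorphisms $\sigma_{-m_i}$ already lie in $G_W$ and are distinct, they exhaust it; this proves both assertions of the lemma.

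I expect the only genuine obstacle to be the conjugacy $c\sim c^b$. Everything surrounding it is elementary finite-group theory, whereas this step encodes the Springer-theoretic content that a primitive $h$-th root of unity determines a unique conjugacy class of regular elements of order $h$. I would therefore be careful to draw it from the ``exactly one class'' assertion underlying the disjoint union~\eqref{eq:mu} in Lemma~\ref{lem:nccox}, rather than from the bare conjugacy statement Theorem~\ref{Springer-theory-facts}\eqref{eq:springer1}: the subtlety is that $\zeta^b$ is a priori known only to be \emph{some} eigenvalue of $c$, not a regular one, and it is exactly the uniqueness in~\eqref{eq:mu} that upgrades this to membership in the same class.
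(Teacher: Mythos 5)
Your proof is correct and rests on the same key ingredient as the paper's proof: the conjugacy of $c$ and $c^{-m_i}$ for an exponent $m_i$ coprime to $h$, deduced from the fact that a primitive $h$-th root of unity lies in the spectrum of exactly one conjugacy class of Coxeter elements --- the paper folds this into its chain of equivalences (stabilizes the set $\Leftrightarrow$ $c$ and $c^k$ have the same eigenvalues $\Leftrightarrow$ they are conjugate $\Leftrightarrow$ $c$ is $\zeta^k$-regular $\Leftrightarrow$ $\zeta^k$ is an eigenvalue of $c$), citing Springer's theorem, while you source it, legitimately and with an accurate diagnosis of where the nontrivial content lies, from the disjointness in~\eqref{eq:mu} of Lemma~\ref{lem:nccox}. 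The only organizational difference is the reverse inclusion: the paper reads its equivalences directly in the direction $\sigma_k\in G_W \Rightarrow \zeta^k\in\Sp(c) \Rightarrow k\equiv -m_i \pmod h$, whereas you obtain it indirectly through the cardinality bound $|G_W|\leq \varphi_W(h)$ coming from the free multiplication action on the coprime residues of exponents --- an equally valid and equally elementary substitute.
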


\begin{proof}
  Let $\gamma$ be an element of $\Gal(\QQ(\zeta)/\QQ)$ given by $\zeta\mapsto \zeta^k$ for some $k\in\{0,\dots,h-1\}$ coprime to~$h$. 
  Let~$c$ be a~$\zeta$-regular element. 
  Using Springer's Theorem~\ref{Springer-theory-facts}, we obtain

  \begin{tabular}{lcl}
    $\gamma$ stabilizes the set $\big\{\zeta^{m_1},\ldots,\zeta^{m_n}\big\}$ & $\Leftrightarrow$ &~$c$ and  $c^k$ have the same eigenvalues\\
    & $\Leftrightarrow$ &~$c$ and  $c^k$ are conjugate\\
    & $\Leftrightarrow$ &~$c$ is $\zeta^k$-regular\\
    & $\Leftrightarrow$ & $\exists i\in \{1,\dots, n\}$, such that $\zeta^{-m_i}=\zeta^k$\\
    & $\Leftrightarrow$ & $k=-m_i$ for some exponent $m_i$ coprime to~$h$.
  \end{tabular}

\end{proof}

\begin{proof}[Proof of Proposition~\ref{prop:equiv}]
  \eqref{eq:freeaction}$\Leftrightarrow$\eqref{eq:degreeoffieldextension}:
    from Proposition~\ref{prop:refl-autom}, the action of~$\Gamma_W$ on $\Cox(W)$ is transitive. 
    So $|\Gamma_W|\geq |\Cox(W)|$, with equality if and only if the action is free. 
    We can conclude since $|\Gamma_W|=[K_W:\QQ]$, and from Lemma~\ref{lem:nccox}, $|\Cox(W)|=\varphi(h)/\varphi_W(h)$.
  
  \eqref{eq:invariantsubfield}$\Leftrightarrow$\eqref{eq:generatedbycoefficients}:
    follows trivially from Lemma~\ref{lem:zeta}.

  \eqref{eq:invariantsubfield}$\Leftrightarrow$\eqref{eq:degreeoffieldextension}:
    it follows from Lemma~\ref{lem:zeta} that $\QQ(\zeta)^{G_W}\subseteq K_W$.
    Therefore
    \[ 
      \QQ(\zeta)^{G_W} = K_W \quad \Leftrightarrow \quad [\QQ(\zeta)^{G_W}:\QQ] = [K_W:\QQ].
    \]
    Now we can compute
    \[
      [\QQ(\zeta)^{G_W}:\QQ] = \frac{[\QQ(\zeta):\QQ]}{[\QQ(\zeta):\QQ(\zeta)^{G_W}]}=\frac{\varphi(h)}{|G_W|}.
    \]
    So $K_W=\QQ(\zeta)^{G_W}$ if and only if  $[K_W:\QQ]=\varphi(h)/|G_W|$.
    The equivalence then follows since one has $|G_W|=\varphi_W(h)$ by Lemma~\ref{lem:exponents}.
\end{proof}

We can now deduce Theorem~\ref{thm:galois} by proving that any of the statements in Proposition~\ref{prop:equiv} holds.
Checking case-by-case the equality $[K_W:\QQ]=\frac{\varphi(h)}{\varphi_W(h)}$ is easy, using the tables in~\cite{BMR1995} for the exceptional groups, and a short computation for the infinite series.
However, we do not have a uniform explanation.
After having done this work, we learnt that the equality $K_W=\QQ(\zeta)^G$ had already been given by G.~Malle in~\cite[Theorem~7.1]{malle}; his proof is also by inspecting the classification.

\begin{remark}
  \label{rk:bad2}
  As mentioned in Remark~\ref{rk:bad}, there are several badly-generated irreducible groups for which the highest degree~$d_n$ is still regular: these are the groups in the series $G(2d,2,2)$ (for $d\geq 2$), and the exceptional groups $G_7$, $G_{11}$, $G_{12}$, $G_{13}$, $G_{19}$, $G_{22}$ and $G_{31}$.
  In this case, one could define Coxeter elements as regular elements of order~$d_n$, as for well-generated groups.
  For these groups, the arguments of the proof of Theorem~\ref{thm:main} still hold. 
  So one has characterizations of Theorem~\ref{thm:main}\eqref{def1} through \eqref{def4} for these Coxeter elements, and there is also an action of the Galois group $\Gal(K_W/\QQ)$ on the set of conjugacy classes of Coxeter elements.
  All the arguments of Proposition~\ref{prop:equiv} also hold, so this action is simply transitive if and only if the equality $[K_W:\QQ]=\frac{\varphi(d_n)}{\varphi_W(d_n)}$ holds. 
  By Remark~\ref{rk:bad}, this is the case for all the groups mentioned above except $G_{13}$, so these groups satisfy all the statements of Theorem~\ref{thm:galois} as well.
\end{remark}

\section{Coxeter elements and generating sets}
\label{sec:coxeter}

In this section, we prove characterizations~\eqref{def6} and~\eqref{def5} in Theorem~\ref{thm:main}, which relate Coxeter elements to some well-behaved generating sets of~$W$.

\subsection{Background on Shephard groups and generalized Coxeter systems}
\label{ssec:shephard}

Among the well-generated groups, (complexified) real reflection groups are special in that they have a presentation using a \defn{Coxeter system}.
In Section~\ref{ssec:charac-cox}, we will relate the usual notion of Coxeter elements arising from a Coxeter presentation to our notion of Coxeter elements.
We will actually treat this in a more general setting, which has the advantage of applying to another class of groups admitting also nice presentations, namely Shephard groups.

A \defn{Shephard group} is a subgroup of $\GL(V)$ that is the symmetry group of a regular complex polytope. 
These groups have been introduced and classified by G.~C.~Shephard in~\cite{shephard}, we also refer to~\cite{Cox1991} for more detail.
H.~S.~M.~Coxeter showed in~\cite{Cox1967} that every Shephard group has a generalized Coxeter structure, as defined below.

\begin{definition}
  \label{def:gencox}
  A generalized Coxeter system~$(W,S)$ is a group~$W$ together with a subset~${S \subseteq W}$, such that~$W$ has a presentation with~$S$ as generating set and relations
  \[ \underbrace{sts\dots}_{m_{s,t}\text{ factors}}=\underbrace{tst\dots}_{m_{s,t}\text{ factors}} \text{ for }s,t \in S\text{ with } s \neq t, \quad \text{and } s^{p_s} =1 \text{ for }s \in S,\]
  for some integers $m_{s,t},p_s \geq 2$, where moreover $p_s = p_{t}$ whenever $m_{s,t}$ is odd.
\end{definition}

Note that in this definition, we do not allow the labels~$m_{s,t}$ and~$p_s$ to be infinite, unlike in the standard notion of Coxeter systems where $m_{s,t}=\infty$ is usually included.
However, this poses no restriction to our setting since we will only be interested in the case where such a generalized Coxeter system gives rise to a finite group.

\medskip

Similarly to the situation for Coxeter systems, one can construct a Coxeter graph for a generalized Coxeter system $(W,S)$.
The vertices are given by the generators and are labelled by their order, i.e., the vertex~$s$ is labelled by~$p_s$.
Moreover, whenever~$m_{s,t} \geq 3$, the two vertices~$s$ and~$t$ are joined by an edge labelled by~$m_{s,t}$.
This yields the definition of irreducible generalized Coxeter systems as those for which the Coxeter graph is connected.

\medskip

Work of H.~S.~M.~Coxeter~\cite{Cox1967} and of D.~W.~Koster~\cite{Kos1975} provide the relations between the combinatorics of finite generalized Coxeter system and the geometry of real reflection groups and Shephard groups.

\begin{itemize}
  \item Any Shephard group has a structure of generalized Coxeter system (this is analogous to the well-known property for real reflection groups). 
  Moreover, as with the chamber geometry in the real case, there is a reasonably natural way to construct a set of generalized Coxeter generators consisting of reflections, using the geometry of the polytope defining the Shephard group~\cite{Cox1967}. 
  We call such a presentation constructed from the geometry a \defn{standard generalized Coxeter presentation}.

  \item Given a generalized Coxeter system $(W,S)$ and an $|S|$-dimensional complex vector space~$V$, there exist a representation $\rho : W \rightarrow \GL(V)$ and an Hermitian form on~$V$ which is invariant under~$\rho(W)$, such that for $s \in S$, the element $\rho(s)$ is a reflection of~$V$ of order~$p_s$.

  \item If the generalized Coxeter system is irreducible and finite, there is such a representation~$\rho$ which is faithful and such that~$\rho(W)$ is a Shephard group or a real reflection group.
\end{itemize}

The last point yields that finite, irreducible generalized Coxeter systems correspond to the class of complex reflection groups which is the union of (complexified) real irreducible reflection groups and Shephard groups.
The latter are known to be all real reflection groups with unbranched Coxeter graph, together with the infinite family $G(r,1,n)$ with $r\geq 3$, and~$15$ of the non-real irreducible, exceptional groups.

\subsection{Regular generating sets for irreducible well-generated groups}
\label{ssec:pres}

After we have recalled classical ways of constructing presentations for real reflection groups and for Shephard groups, we now discuss a less classical approach to construct well-behaved presentations of general irreducible well-generated groups.
We will then show that there exist such presentations whose generating set is regular, as needed in the assertion of Theorem~\ref{thm:main}.

\medskip

Any irreducible well-generated group of rank~$n$ can be minimally generated by~$n$ reflections.
Much work has been devoted to finding well-behaved presentations by generators and relations such that the generating set consists of~$n$ reflections.
For the general case of an arbitrary irreducible well-generated group, there is no canonical presentation analogous to the Coxeter presentation discussed above.
However, a uniform approach has been given by D.~Bessis~\cite{Bessis2}, using the geometry of the braid group of~$W$ and a construction known as the \emph{dual braid monoid}.
Fix an $e^{2i\pi/h}$-regular element~$c$, and denote by~$R_c$ the set of those reflections $r\in R$, such that $r\leq_R c$, where $\leq_R$ is the absolute order defined in~\eqref{eq:abs}. 
A \defn{dual braid relation} is a formal relation of the form $r_1r_2=r_3r_1$, with $r_1,r_2,r_3\in R_c$ such that $r_1r_2\leq_R c$, $\ell_R(r_1r_2)=2$, and the relation $r_1r_2=r_3r_1$ holds in~$W$.
Then~$W$ admits the presentation
\begin{equation}
  W \simeq  \left\langle\ R_c \ \middle| \ \text{dual braid relations} + \text{ relations } r^{p_r}=1 \ (\forall r\in R_c)\ \right\rangle , \label{eq:pres}
\end{equation}
where $p_r$ is the order of the reflection $r$ in~$W$.
(Removing the relations $r^{p_r}=1$ gives actually a presentation for the \emph{braid group}~$B(W)$, see \cite[Remark 8.9]{Bessis2}).
This presentation is highly redundant and can be simplified as follows.
For an expression $c = r_1 \cdots r_n$ with $r_1,\ldots,r_n \in R$, one can obtain a simplified presentation involving only the generators $r_1,\dots, r_n$,
\begin{equation}
  W \simeq  \left\langle\ r_1,\ldots,r_n \ \middle| \ \text{modified dual braid relations} + \text{ relations } r_i^{p_i}=1 \ (1 \leq i \leq n )\ \right\rangle. \label{eq:presred}
\end{equation}
This follows from the transitivity of the Hurwitz action on reduced decompositions of the Coxeter element~$c$, see~\cite[Definition~6.19, Proposition~7.6]{Bessis2}.
Indeed, using dual braid relations, any $r \in R_c$ can be written in terms of $r_1,\ldots,r_n$.
Thus, any occurrence of a generator $r \notin \{r_1,\ldots,r_n\}$ in a dual braid relation in~\eqref{eq:pres} is written in terms of $r_1,\ldots,r_n$ inside the modified dual braid relations in~\eqref{eq:presred}.
We call a presentation of~$W$ as in~\eqref{eq:presred} a \defn{dual braid presentation} for~$W$.

It is enough for our purposes to consider such dual braid presentations.
Nevertheless, such presentations usually still contain redundant relations, but removing these redundancies involves non-canonical choices.
Observe that, by construction, the product $r_1 \cdots r_n$ of the generators in this presentation is $e^{2i\pi/h}$-regular.

It turns out that the presentations of~$W$ obtained in earlier works can be obtained from a dual braid presentation by removing further redundancies.
In particular, the standard presentations of Coxeter groups and of Shephard groups can be obtained this way.
In the general case, one can also recover the presentations described by Coxeter in~\cite{Cox1967}, and the presentations given in~\cite{BMR98} (see also~\cite{Bessis-Michel} and~\cite[\S6]{Malle-Michel}).
Such well-behaved presentations are gathered in the table~\cite{Michel-table}, and are implemented explicitly in CHEVIE~\cite{CHEVIE}.

\medskip

We will next show that for any irreducible well-generated group~$W$, there exists a dual braid presentation that satisfies stronger properties.

\begin{proposition}
  \label{prop:standardpresreg}
  Any irreducible well-generated group~$W$ admits a dual braid presentation whose set of generators~$S_0$ satisfies the following properties:
  \begin{enumerate}[(1)]

    \item any reflection in~$W$ is conjugate to a power of a reflection in $S_0$; \label{it:powerconj}

    \item the product, in \emph{any} order, of all the elements in~$S_0$ is a Coxeter element, i.e., a regular element of order~$h$. \label{it:reordering}
  \end{enumerate}
  More precisely, the following presentations satisfy Properties~\eqref{it:powerconj} and~\eqref{it:reordering}:
  \begin{enumerate}[(i)]

    \item for~$W$ real: the standard Coxeter presentation (arising from the chamber geometry);

    \item for~$W$ Shephard group: the standard ``generalized Coxeter presentation'' for~$W$;

    \item for any~$W$: the explicit presentations in~\cite{Michel-table} implemented in CHEVIE.

  \end{enumerate}
\end{proposition}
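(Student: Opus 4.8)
The plan is to produce, for each irreducible well-generated~$W$, an explicit dual braid presentation as in~\eqref{eq:presred} and to verify Properties~\eqref{it:powerconj} and~\eqref{it:reordering} on its generating set. Since the standard Coxeter presentation, the standard generalized Coxeter presentation, and the CHEVIE presentations of~\cite{Michel-table} are all obtained from dual braid presentations by removing redundant relations, it is enough to treat the three presentations listed in~(i)--(iii). First I would isolate what is automatic. For \emph{any} dual braid presentation with generators $r_1,\dots,r_n$, the product $r_1\cdots r_n$ in its natural order is $e^{2i\pi/h}$-regular, hence already a Coxeter element; so the real content of Property~\eqref{it:reordering} is that every \emph{reordering} of $r_1\cdots r_n$ is again regular of order~$h$. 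For Property~\eqref{it:powerconj}, the dual braid relations exhibit each new reflection of $R_c$ as a conjugate of an old one, so every reflection lying in $R_c$ is already $W$-conjugate to one of the $r_i$; the remaining task is to reach all reflections of~$W$, of every order, which is exactly where passing to \emph{powers} of the $r_i$ is needed.

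For the real and Shephard cases~(i) and~(ii) I would give a uniform combinatorial argument based on the fact that the Coxeter graph, respectively the generalized Coxeter graph, of an irreducible finite Coxeter or Shephard group is a \emph{tree}. A product of the generators in a given order determines an acyclic orientation of this graph (orient $r_i\to r_j$ when $r_i$ precedes $r_j$); two orders giving the same orientation differ only by transpositions of commuting generators and hence give the same product. The \emph{source-to-sink flip}, which takes a generator~$s$ all of whose neighbours come later, brings~$s$ to the front across its commuting prefix and then conjugates by~$s$ to push it to the end, replaces the product~$c$ by $s^{-1}cs$; it therefore preserves the $W$-conjugacy class while reversing all edges at~$s$. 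As on a tree all acyclic orientations lie in a single source-to-sink flip class, every reordering of the generators is $W$-conjugate to $r_1\cdots r_n$, hence regular of order~$h$, giving Property~\eqref{it:reordering}. Property~\eqref{it:powerconj} is then classical for real groups, where reflections have order~$2$ and each is conjugate to a simple one; for Shephard groups it follows from the polytope geometry once one checks that the hyperplanes of the generators meet every $W$-orbit of reflecting hyperplanes, the powers of each generator exhausting the cyclic pointwise stabiliser of its hyperplane.

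For the general case~(iii) no such tree is available, so I would argue through the classification. For the well-generated infinite series a direct computation with monomial matrices shows that every reordering of the chosen generators retains a primitive $h$-th root of unity as an eigenvalue, and for the finitely many exceptional groups one verifies Properties~\eqref{it:powerconj} and~\eqref{it:reordering} on the explicit generators of~\cite{Michel-table}. By the equivalence~\eqref{def1}$\Leftrightarrow$\eqref{def2} of Theorem~\ref{thm:main}, checking Property~\eqref{it:reordering} amounts, for each ordering, to exhibiting an eigenvalue of order~$h$, so this is a finite verification in each case.

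The hard part is precisely the ``in any order'' clause of Property~\eqref{it:reordering} in the general case: the diagrams underlying the CHEVIE presentations may contain cycles, so the source-to-sink argument breaks down and distinct reorderings need not even be conjugate---consistently with their being allowed to be non-conjugate Coxeter elements in the sense of this paper. I do not expect a uniform conceptual proof here and would fall back on the case-by-case check; by contrast Property~\eqref{it:powerconj} is essentially designed into these presentations, the BMR-type generators being chosen to represent every conjugacy class of reflections up to taking powers.
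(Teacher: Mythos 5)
Your handling of Property~\eqref{it:reordering} matches the paper's proof almost exactly: the paper likewise reduces to reorderings (the natural-order product of a dual braid presentation is $e^{2i\pi/h}$-regular by construction), handles the real and Shephard cases by the tree/path structure of the (generalized) Coxeter diagram --- your source-to-sink flip is precisely the ``standard argument'' the paper invokes to show all reorderings are conjugate, and it indeed needs no involutivity of the generators --- and then resorts to a case-by-case check for the remaining groups: an explicit monomial-matrix computation for $G(e,e,n)$ (the paper shows every reordering is conjugate to $c$ or to $c^{-1}$, hence $\zeta^{\pm 1}$-regular) and a computer verification on the CHEVIE generators for the exceptional well-generated groups. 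So on the harder half of the proposition you and the paper take the same route.

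The genuine gap is in Property~\eqref{it:powerconj}. For case (iii) you assert it is ``essentially designed into these presentations,'' and for Shephard groups you defer to ``polytope geometry''; neither claim is proved. The paper closes exactly this hole with a uniform argument valid for \emph{every} dual braid presentation: by Bessis's construction each generator in $R_c$ is a \emph{distinguished} reflection (it comes from a braid reflection, \cite[Remark~6.10]{Bessis2}); since the pointwise stabilizer $W_H$ of a hyperplane $H$ is cyclic of order $e_H$ and generated by its distinguished reflection, every reflection is a power of the distinguished reflection of its hyperplane, and conjugation carries distinguished reflections to distinguished reflections. Hence Property~\eqref{it:powerconj} is equivalent to the claim that every reflecting hyperplane lies in the $W$-orbit of the hyperplane of some generator, and this follows from the fact that $S_0$ generates $W$ together with the description of the linear characters of $W$ in terms of hyperplane orbits \cite[Theorem~9.19]{LehrerTaylor}: if some orbit of hyperplanes were missed, there would be a nontrivial linear character of $W$ trivial on all generators. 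Note that your correct observation that every reflection of $R_c$ is $W$-conjugate to some $r_i$ (via Hurwitz moves) does not suffice, since it says nothing about reflections outside $R_c$ --- in particular non-distinguished ones --- which is exactly what the powers-and-orbits argument supplies; without it, your cases (ii) and (iii) rest on unverified assertions.
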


Given a generating set consisting of $n$ reflections, Properties~\eqref{it:powerconj} and~\eqref{it:reordering} are sufficient to make the proof of Characterization~\eqref{def6} of Theorem~\ref{thm:main} work.
That is why we choose to give a name to such a generating set: we call below (and in Theorem~\ref{thm:main}) a \defn{regular generating set} for~$W$ a generating set which
\begin{itemize}
\item consists of $n$ reflections;
\item satisfies Properties~\eqref{it:powerconj} and~\eqref{it:reordering} in Proposition~\ref{prop:standardpresreg}.
\end{itemize}

\begin{remark}
  Given our definition of a dual braid presentation in~\eqref{eq:presred}, Proposition~\ref{prop:standardpresreg} implies that for an $e^{2i\pi/h}$-regular element~$c$, there exists a reduced decomposition $(r_1,\dots, r_n)$ of~$c$ such that the product, in \emph{any} order, of $r_1,\dots, r_n$ is a Coxeter element (i.e., regular of order~$h$). 
  This property does not depend on the choice of~$c$ as an $e^{2i\pi/h}$-regular element, or even as a Coxeter element (this follows easily from Proposition~\ref{prop:refl-autom}). 
  Note however that it \emph{does} depend in general on the chosen reduced decomposition.
  There may exist reduced decompositions $(r_1',\dots, r_n')$ of~$c$ such that for some $\sigma\in S_n$, the product $r'_{\sigma(1)}\dots r'_{\sigma(n)}$ is not a Coxeter element. 
  For example, in type $D_4$, consider the reduced decomposition $c=s\cdot t\cdot uvu \cdot u$ (where $s,t,u,v$ are the standard Coxeter generators, $u$ being the central one in the diagram). 
  Then the product $s\cdot uvu\cdot t \cdot u$ has order $4\neq 6=h$ and is thus not a Coxeter element.
\end{remark}

\begin{proof}[Proof of Proposition~\ref{prop:standardpresreg}.]
  We first show uniformly that Property~\eqref{it:powerconj} actually holds for any dual braid presentation.
  This property is classical for the standard presentations of real groups (see e.g.~\cite[Proposition~1.14]{Humphreys}) and of Shephard groups (a case-by-case argument already appears in~\cite[Theorem~5]{Kos1975}). 

  To check this property in general, let~$s$ be a reflection in~$W$, denote by~$H$ its fixed hyperplane, by~$W_H$ the pointwise stabilizer of~$H$ in~$W$, and by~$e_H$ the order of~$W_H$. 
  Recall that~$s$ is called a distinguished reflection if~$s$ has determinant $e^{2i\pi/e_H}$, see~\cite[Definition~2.15]{BMR98}.
  Bessis' constructions in~\cite{Bessis2} imply that any reflection in a generating set of any dual braid presentation, i.e., any reflection in~$R_c$ in~\eqref{eq:pres}, is a distinguished reflection (these reflections are indeed constructed from elements of the braid group called ``braid reflections'', see~\cite[Remark~6.10]{Bessis2}).
  As a consequence, given a dual braid presentation for~$W$, with generating set~$S$, Property~\eqref{it:powerconj} is equivalent to the fact that any reflecting hyperplane is in the~$W$-orbit of the hyperplane of some reflection in~$S$. 
  Since $S$ generates~$W$, this property follows from the description of linear characters of~$W$ using the orbits of hyperplane, cf. e.g. \cite[Theorem~9.19]{LehrerTaylor}.

  It remains to show the existence of a dual braid presentation satisfying Property~\eqref{it:reordering}. 
  We need to prove that for $\zeta = e^{2i\pi/h}$ and a~$\zeta$-regular element~$c$, there exists a reduced decomposition $(r_1,\dots, r_n)$ of~$c$ as in \eqref{eq:presred}, such that for any $\sigma\in S_n$, the product $r_{\sigma(1)}\dots r_{\sigma(n)}$ is a Coxeter element. 
  For the standard Coxeter presentation of a real reflection group, it is again a classical fact: since the associated Coxeter diagram is a tree, a standard argument yields that all these different products are conjugate, and so they are actually all~$\zeta$-regular.
  The same argument still holds for any Shephard group (because its generalized Coxeter diagram is linear), in particular for the series $G(d,1,n)$. 
  For the remaining groups, we need to resort to a case-by-case check.
  We actually show the stronger result that the different products are either~$\zeta$-regular or $\zeta^{-1}$-regular.
  \begin{itemize}
  \item For any of the exceptional well-generated irreducible groups, we check that the product of the minimal generating set implemented in CHEVIE, in any order, is either~$\zeta$-regular or $\zeta^{-1}$-regular, and thus satisfies Property~\eqref{it:reordering}.
  \item For the remaining family $W=G(e,e,n)$, we need to do the computation explicitly.
    Write $b_1,\dots, b_n$ for the canonical basis of $\CC^n$, and set $\omega:=\exp(2i\pi/e)$ and $\zeta:=\exp(2i\pi/h)$, where $h=(n-1)e$.
    Let $t,s_1,..., s_{n-1}$ be moreover the standard generators for~$W$. 
    The generators $s_1,..., s_{n-1}$ are the standard generators of type $A_{n-1}$: $s_i$ interchanges $b_i$ and $b_{i+1}$ and we write $s_i=(i\ i+1)$. 
    The generator $t$ is defined by $t(b_1)=\omega b_2$, $t(b_2)=\omega^{-1}b_1$, and $t(b_i)= b_i$ for $i\geq 3$.
    Let $c_0:=s_1...s_{n-1}=(123 \dots n)$ and $c:=tc_0$. 
    Computing the eigenvalues of~$c$,  we see that~$c$ is~$\zeta$-regular.
    Moreover, a product of the generators $t, s_1,\dots, s_{n-1}$, in any order, is conjugate to an element of the form $tw$ where $w=s_{\pi(1)} s_{\pi(2)}\dots s_{\pi(n-1)}$, for $\pi\in S_{n-1}$.
    The element $w$ is again a long-cycle of the permutation group $S_n$, so is conjugate to $c_0$. 
    Moreover, $w$ either sends $1$ to $2$ or sends $2$ to $1$ (depending on the relative position of the factors $s_1$ and $s_2$ in the product).
    Thus $tw$ is conjugate
    \begin{itemize}
    \item either to $t\cdot  (123 \dots n)=c$, and thus is~$\zeta$-regular;
    \item or to $t\cdot (21n\ n-1 \dots 3)=tc_0^{-1}$, which is conjugate to $c^{-1}$, and thus is $\zeta^{-1}$-regular.
    \end{itemize}
  \end{itemize}
\end{proof}

\begin{remark}
  \label{rk:altpres}
  Alternative presentations, other than the ones in~\cite{Michel-table}, have been given for the five exceptional non-Shephard well-generated groups; see in particular~\cite[\S6]{Malle-Michel}, where the new generating sets are obtained from an initial one by applying Hurwitz action.
  It is natural to ask whether these alternative presentations are also regular, i.e., whether the product of the generators, in any order, is a regular element of order $h$.
  It turns out that for $G_{24}$, $G_{27}$ and $G_{29}$, all the alternative presentations are also regular, whereas for $G_{33}$ and $G_{34}$, among the five presentations $P_1$ through $P_6$ described by G.~Malle and J.~Michel in~\cite[\S6.4]{Malle-Michel}, only the initial presentation $P_1$ is regular.
\end{remark}

We now prove the following equivalence, which corresponds to characterization~\eqref{def6} in Theorem~\ref{thm:main}.
Recall that two generating sets for~$W$ are said to be \emph{isomorphic} if there is a bijection between them which extends to an automorphism of~$W$.
\begin{proposition}
  \label{prop:pres}
  Let~$W$ be an irreducible, well-generated group, and fix any regular generating set~$S_0$ for~$W$.
  Let~$c$ be an element in~$W$.
  Then~$c$ is a Coxeter element of~$W$ if and only if there exists a subset~$S$ of reflections of~$W$ such that:
\begin{itemize}
  \item~$S$ is isomorphic to~$S_0$;
  \item~$c$ is the product (in some order) of the elements of~$S$. 
\end{itemize}
\end{proposition}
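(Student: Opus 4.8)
The plan is to prove Proposition~\ref{prop:pres} as an easy consequence of the already-established characterization~\eqref{def4} in Theorem~\ref{thm:main} (equivalently, of Proposition~\ref{prop:refl-autom}), together with Property~\eqref{it:reordering} of a regular generating set from Proposition~\ref{prop:standardpresreg}. The key observation is that, since~$S_0$ is a regular generating set, the product (in any order) of its elements is already a Coxeter element; the role of the isomorphism condition on~$S$ is precisely to transport this property along a reflection automorphism.

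For the ``if'' direction, suppose~$S = \{r_1,\ldots,r_n\}$ is a set of reflections isomorphic to~$S_0$ via a bijection extending to an automorphism~$\psi$ of~$W$, and suppose~$c = r_{i_1}\cdots r_{i_n}$ is a product of the elements of~$S$ in some order. First I would note that since~$\psi$ sends the reflections in~$S_0$ to the reflections in~$S$, it sends reflections to reflections, hence is a reflection automorphism (here I would want to verify that an automorphism carrying a generating set of reflections to another set of reflections necessarily preserves all of~$R$; this follows because every reflection is conjugate to a power of a generator by Property~\eqref{it:powerconj}, and~$\psi$ respects conjugacy and powers). Writing~$s_1,\ldots,s_n$ for the corresponding elements of~$S_0$ under the bijection, the element $s_{i_1}\cdots s_{i_n}$ is a product of the elements of~$S_0$ in some order, hence a Coxeter element by Property~\eqref{it:reordering}. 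Since~$\psi$ is an algebra homomorphism, $\psi(s_{i_1}\cdots s_{i_n}) = r_{i_1}\cdots r_{i_n} = c$, and Proposition~\ref{prop:refl-autom} guarantees that the image of a Coxeter element under a reflection automorphism is again a Coxeter element. Thus~$c$ is a Coxeter element.

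For the ``only if'' direction, let~$c$ be any Coxeter element. Fix the Coxeter element $c_0$ obtained as the product of the elements of~$S_0$ in some fixed order (a Coxeter element by Property~\eqref{it:reordering}). By Proposition~\ref{prop:refl-autom}, the action of~$\Aut_R(W)$ is transitive on Coxeter elements, so there is a reflection automorphism~$\psi$ with~$\psi(c_0) = c$. Setting~$S := \psi(S_0)$, the set~$S$ consists of reflections (as~$\psi$ preserves~$R$), is isomorphic to~$S_0$ by construction (the bijection is~$\psi$ restricted to~$S_0$, which extends to the automorphism~$\psi$), and applying~$\psi$ to the defining product $c_0 = s_1\cdots s_n$ yields $c = \psi(s_1)\cdots\psi(s_n)$, exhibiting~$c$ as a product of the elements of~$S$ in some order.

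The only genuinely delicate point — the main obstacle — is the first direction's claim that an automorphism carrying~$S_0$ to a set~$S$ of reflections is automatically a \emph{reflection} automorphism, i.e.\ preserves all of~$R$, not merely the generators. This is where Property~\eqref{it:powerconj} is essential: every reflection of~$W$ is conjugate to a power of some reflection in~$S_0$, and conjugates of powers of the~$r_i\in S$ are again reflections precisely because the~$r_i$ are reflections and conjugation preserves the set of reflections; one must check that the relevant powers land on genuine reflections (which holds since the~$r_i$ have the same orders as the corresponding~$s_i$, the isomorphism respecting element orders). I would make this explicit to close the argument cleanly, after which both directions follow formally from the transitivity and well-definedness already recorded in Proposition~\ref{prop:refl-autom}.
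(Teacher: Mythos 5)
Your proof is correct and follows essentially the same route as the paper's: the ``only if'' direction uses the transitivity in Proposition~\ref{prop:refl-autom} to transport $S_0$ by a reflection automorphism, and the ``if'' direction uses Property~\eqref{it:powerconj} to upgrade the isomorphism to a reflection automorphism and Property~\eqref{it:reordering} together with Proposition~\ref{prop:refl-autom} to conclude. The only difference is packaging: the paper isolates your ``if'' argument as a separate lemma (Lemma~\ref{lem:isom-reg}, stating that $S$ is itself a regular generating set), but the underlying argument is identical, including your identification of the delicate point that $\psi$ must preserve all of~$R$.
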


One direction of the equivalence is a consequence of the following property.

\begin{lemma}
  \label{lem:isom-reg}
  Let $S_0$ be a regular generating set for~$W$. Suppose $S$ is a generating set for~$W$, isomorphic to $S_0$ and consisting of reflections. Then~$S$ is also a regular generating set.
\end{lemma}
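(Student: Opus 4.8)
The plan is to show that a generating set $S$ which is isomorphic to a regular generating set $S_0$ and consists of reflections inherits the two defining properties of regularity, namely Property~\eqref{it:powerconj} and Property~\eqref{it:reordering}. The key observation is that an isomorphism of generating sets is, by definition, a bijection $\beta: S_0 \to S$ that extends to an automorphism $\psi$ of $W$; since both $S_0$ and $S$ consist of reflections and $\psi$ maps $S_0$ into $R$ (as $\beta(S_0) = S \subseteq R$), I first want to argue that $\psi$ is in fact a \emph{reflection} automorphism. This requires a little care: knowing that $\psi$ sends the particular reflections in $S_0$ to reflections does not immediately give that $\psi$ sends \emph{every} reflection to a reflection. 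However, since $S_0$ generates $W$ and satisfies Property~\eqref{it:powerconj}, every reflection of $W$ is $W$-conjugate to a power of some reflection $s \in S_0$. Applying $\psi$, any reflection maps to a $W$-conjugate of a power of $\psi(s) = \beta(s)$, which is itself a reflection; one then checks that the relevant power of a reflection is again a reflection (using that $\psi$ preserves orders of elements, and that a generating reflection and its image have the same order so the power landing on a reflection corresponds correctly). This step — upgrading ``sends the generators to reflections'' to ``sends all reflections to reflections'' — is where I expect the main subtlety to lie.

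Once $\psi \in \Aut_R(W)$ is established, the rest follows cleanly. For Property~\eqref{it:powerconj} of $S$: given any reflection $r \in W$, the reflection $\psi^{-1}(r)$ is conjugate to a power of some $s \in S_0$ by regularity of $S_0$; applying $\psi$ shows $r$ is conjugate to a power of $\psi(s) = \beta(s) \in S$, as $\psi$ preserves conjugacy and commutes with taking powers. Thus $S$ satisfies~\eqref{it:powerconj}.

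For Property~\eqref{it:reordering} of $S$: I want to show the product of the elements of $S$ in any order is a Coxeter element. Enumerate $S_0 = \{s_1, \dots, s_n\}$ so that $S = \{\beta(s_1), \dots, \beta(s_n)\} = \{\psi(s_1), \dots, \psi(s_n)\}$. For any permutation $\sigma \in S_n$, the product $\psi(s_{\sigma(1)}) \cdots \psi(s_{\sigma(n)})$ equals $\psi(s_{\sigma(1)} \cdots s_{\sigma(n)})$ since $\psi$ is a homomorphism. By Property~\eqref{it:reordering} for $S_0$, the element $s_{\sigma(1)} \cdots s_{\sigma(n)}$ is a Coxeter element of $W$, i.e.\ regular of order $h$. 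Since $\psi$ is a reflection automorphism, Proposition~\ref{prop:refl-autom} guarantees that $\psi$ preserves the set of Coxeter elements, so $\psi(s_{\sigma(1)} \cdots s_{\sigma(n)})$ is again a Coxeter element. As $S$ consists of $n$ reflections and satisfies both properties, it is a regular generating set, completing the proof.

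The only genuinely nontrivial point, as noted, is verifying that the extending automorphism $\psi$ preserves all reflections rather than merely the chosen generators; I would handle this by leaning on Property~\eqref{it:powerconj} for $S_0$ together with the fact that a reflection automorphism is characterized (via Proposition~\ref{prop:marinmichel} and the discussion of Galois automorphisms) by its behaviour on reflection orbits, and checking that powers of reflections appearing in the conjugacy statement are themselves reflections whenever they have reflection order. Everything else is a formal transport of structure along $\psi$, using that automorphisms respect products, conjugacy, powers, and element orders.
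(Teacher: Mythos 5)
Your proof is correct and follows essentially the same route as the paper's: use Property~\eqref{it:powerconj} of $S_0$, together with the fact that a non-identity power of a reflection is again a reflection, to upgrade $\psi$ from ``sends $S_0$ into $R$'' to a genuine reflection automorphism, and then transport Properties~\eqref{it:powerconj} and~\eqref{it:reordering} along $\psi$, invoking Proposition~\ref{prop:refl-autom} for the latter. The appeal to Proposition~\ref{prop:marinmichel} in your closing paragraph is unnecessary --- the conjugacy-of-powers argument you already gave suffices, and it is exactly the paper's argument.
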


\begin{remark}
  Together with this fact, Proposition~\ref{prop:pres} implies that~$c$ is a Coxeter element if and only if it is a product of the elements in a regular generating set.
  However, there may be several isomorphism classes of regular generating sets, so Proposition~\ref{prop:pres} is stronger than this property.
  Note that even for real groups, there may be regular generating sets which are not isomorphic to the Coxeter generating set (consider for example the set of transpositions~$\{(12),(13),(14)\}$ in the symmetric group $S_4$).
\end{remark}

\begin{proof}[Proof of Lemma~\ref{lem:isom-reg}]
    Since $S$ is isomorphic to $S_0$, there exists an automorphism $\psi$ of~$W$ such that $S=\psi(S_0)$.
    Any reflection of~$W$ is a conjugate of a power of some reflection in $S_0$ (Property~\eqref{it:powerconj} for a regular generating set), and $S\subseteq R$, so $\psi$ sends reflections to reflections, and~$S$ satisfies Property~\eqref{it:powerconj} as well.
    Any product of elements of $S$ is the image by~$\psi$ of a product of elements of~$S_0$, which is a Coxeter element by Property~\eqref{it:reordering}.
    Since~$\psi$ is a reflection automorphism, by Proposition~\ref{prop:refl-autom} such a product is also a Coxeter element and ~$S$ satisfies Property~\eqref{it:reordering} as well.
\end{proof}

\begin{proof}[Proof of Proposition~\ref{prop:pres}]
  Let~$c$ be any Coxeter element of~$W$.  Let $c_0$ be a Coxeter element obtained by taking the product of the elements in $S_0$ in some order. From Proposition~\ref{prop:refl-autom}, there exists a reflection automorphism~$\psi$ such that $c=\psi(c_0)$. 
  Set~$S=\psi(S_0)$. 
  Then~$c$ is the product in some order of the elements in~$S$, and~$S$ is a generating set isomorphic to~$S_0$.

  The remaining implication follows from Lemma~\ref{lem:isom-reg}.
\end{proof}

\subsection{Characterization of Coxeter elements in real groups and Shephard groups}

\label{ssec:charac-cox}

Let~$W$ be an irreducible real reflection group or a Shephard group, with set of reflections~$R$. 
We are now in the position to check the characterization~\eqref{def5} of Coxeter elements in Theorem~\ref{thm:main}, i.e., that regular elements of order~$h$ in~$W$ are precisely those elements that can be written as $s_1\dots s_n$ for $S=\{s_1,\dots,s_n\}$, with $S\subseteq R$ and $(W,S)$ being a generalized Coxeter system (see Section~\ref{ssec:shephard}).
This will follow easily from the characterization~\eqref{def6} proven in Section~\ref{ssec:pres}, using Proposition~\ref{prop:cox} below that exhibits a rigidity property of generalized Coxeter presentations.

\begin{proposition}
\label{prop:cox}
  Let~$W$ be a complex reflection group, and let~$R$ be the set of all its reflections. Assume~$S$, $S'$ are two subsets of $R$ such that $(W,S)$ and $(W,S')$ are generalized Coxeter systems.
  Then $(W,S)$ and $(W,S')$ are isomorphic generalized Coxeter systems.
\end{proposition}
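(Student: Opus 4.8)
The plan is to show that the labeled generalized Coxeter diagram attached to $(W,S)$ is an invariant of $W$ alone, so that the diagrams of $(W,S)$ and $(W,S')$ must agree up to isomorphism. Recall that a generalized Coxeter system $(W,S)$ is encoded by its diagram $\Gamma(S)$: the nodes are the elements $s \in S$, labeled by the orders $p_s$, and two distinct nodes $s,t$ are joined by an edge labeled $m_{s,t}$ whenever $m_{s,t} \geq 3$. Two generalized Coxeter systems are isomorphic precisely when their diagrams are isomorphic as labeled graphs: a labeled-graph isomorphism $\Gamma(S) \to \Gamma(S')$ identifies the two presentations, so the induced bijection $S \to S'$ extends to an automorphism of $W$ (both being presentations of the same group $W$). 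Hence the whole statement reduces to proving $\Gamma(S) \cong \Gamma(S')$.

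Since a complex reflection group is finite and decomposes as a product of irreducible reflection groups according to the decomposition of $V$ into irreducible $W$-submodules, and since the connected components of a generalized Coxeter diagram induce a direct-product decomposition of $W$ (generators in distinct components commute, as $m_{s,t}=2$ means $st=ts$), the components of $\Gamma(S)$ and of $\Gamma(S')$ both realize this intrinsic decomposition of $W$. We may therefore assume $W$ irreducible, so that $\Gamma(S)$ and $\Gamma(S')$ are connected. By the Coxeter--Koster theory recalled in Section~\ref{ssec:shephard}, $W$ is then a real reflection group or a Shephard group, and by the classification of finite irreducible generalized Coxeter systems each of its diagrams is a tree on $n = |S| = |S'|$ nodes, where $n$ is the rank of $W$. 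The essential point is that the faithful realization attached to either diagram is the fixed reflection group $W \subseteq \GL(V)$; in particular both diagrams present a group with the same degrees $d_1 \leq \cdots \leq d_n$ and, $W$ being well-generated, the same codegrees as $W$.

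It remains to see that these arithmetic invariants pin down the diagram. If $W$ is real, every reflection of $R$ has order $2$, so every node label $p_s$ equals $2$ and $\Gamma(S), \Gamma(S')$ are ordinary Coxeter diagrams; one then checks through the classification that the multiset of degrees of an irreducible finite Coxeter group determines its Coxeter diagram up to isomorphism, whence $\Gamma(S) \cong \Gamma(S')$. If instead $W$ is a non-real Shephard group, then $\Gamma(S), \Gamma(S')$ are linear generalized Coxeter diagrams; since $W$ is well-generated its codegrees are determined by its degrees, so by Proposition~\ref{prop:observation} the common degrees and primitivity of $W$ determine its Shephard--Todd type, and inspecting the classification of Shephard groups shows that each such type carries a unique generalized Coxeter diagram up to isomorphism. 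In either case $\Gamma(S) \cong \Gamma(S')$, which is what we wanted.

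The main obstacle is exactly this last step: there seems to be no uniform, classification-free reason why the degrees (together with the codegrees in the Shephard case) should determine the generalized Coxeter diagram. The argument consequently rests on a finite case-by-case verification over the Shephard--Todd classification --- streamlined in the Shephard case by Proposition~\ref{prop:observation} --- confirming that no two non-isomorphic diagrams among real reflection groups and Shephard groups share the same degrees and primitivity. Everything else in the proof is formal.
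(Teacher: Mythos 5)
Your skeleton matches the paper's: reduce to a connected diagram by matching the components of $\Gamma(S)$, $\Gamma(S')$ with the decomposition of $V$ into irreducible $W$-submodules (this uses $S,S'\subseteq R$, exactly as in the paper's proof), then finish by inspecting the classification. But the final step has a genuine gap. Your inspection uses the degrees of $W$ as the invariant that pins down the diagram, and this rests on the assertion that ``the faithful realization attached to either diagram is the fixed reflection group $W\subseteq\GL(V)$.'' That assertion is precisely what is missing: the Coxeter--Koster theory recalled in Section~\ref{ssec:shephard} only says that the abstract group presented by a diagram admits \emph{some} faithful realization as a real reflection group or Shephard group; it does not say that the given embedding $W\subseteq\GL(V)$ is that realization, nor even that every realization of a fixed diagram (with generators acting as reflections) has the same rank and degrees --- note that your claim $|S|=|S'|=n$ is also unproved for the same reason. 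Without this, knowing the degrees of the concrete group $W$ does not determine $\Gamma(S)$, and the table-check you describe is not well founded. Concretely: $G_5$ and $G(6,1,2)$ are both Shephard groups with the same degrees $\{6,12\}$ and codegrees $\{0,6\}$ but non-isomorphic diagrams ($3[4]3$ versus $6[4]2$); your argument separates them only through primitivity of the ambient group, so to run it you must rule out that the primitive group $G_5$ contains a set $S'$ of reflections realizing the diagram $6[4]2$. Ruling that out is not a degree computation at all: it holds because any realization of $6[4]2$ is abstractly isomorphic to $G(6,1,2)$, which is not abstractly isomorphic to $G_5$ (their central quotients are the dihedral group of order $12$ and $A_4$, respectively). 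In other words, your claim that ``each such type carries a unique generalized Coxeter diagram,'' read as a statement about the concrete group $W$ and arbitrary $S\subseteq R$, is the proposition being proved; read as a statement about the canonical realizations listed in the tables, it does not apply to $S$ and $S'$ without the unproved identification.

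The paper closes exactly this hole by choosing a different invariant: the abstract isomorphism type. Its Lemma~\ref{lem:abstractisomorphism} states that two \emph{irreducible} generalized Coxeter systems whose groups are finite and abstractly isomorphic have isomorphic diagrams; this is verified case by case, via \cite[Appendix~A1]{BB2005} for Coxeter systems and Koster's classification~\cite{Kos1975} for the generalized ones. Since $(W,S)$ and $(W,S')$ are by hypothesis two generalized Coxeter presentations of the \emph{same} group, that lemma applies immediately, with no identification of representations whatsoever. To repair your proof, either switch to this invariant, or actually prove the identification you assert --- i.e., that every irreducible representation of a finite generalized Coxeter system in which the generators act as reflections is, up to Galois twist, the canonical one --- which is itself a nontrivial classification-based fact and is not available in the paper.
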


In particular, rewritten in the context of abstract Coxeter systems, Proposition~\ref{prop:cox} implies the following.
Let $(W,S)$ be a \emph{finite} Coxeter system, and let~$T$ denote the conjugacy closure of~$S$ in~$W$.
Then any Coxeter system $(W,S')$ for~$W$, with $S'\subseteq T$, is isomorphic to $(W,S)$.
Even in the case of classical Coxeter systems, the only proof of Proposition~\ref{prop:cox} we know is case-by-case. 
This property has been already stated in \cite[\S1.1]{Bessis1}, without proof.
Note that however, $S'$ is not necessarily $W$-conjugate to $S$, as seen in the example of the dihedral group $I_2(5)$ (Example~\ref{ex:H2}).

\begin{remark}
  It is well known that Proposition~\ref{prop:cox} does not hold if one does not assume~$S\subseteq R$.
  Some classical counterexamples arise from the existence of a group isomorphism between~$I_2(2m)$ and~${A_1\times I_2(m)}$ (for any~$m\geq 3$).
  This property was moreover shown not to hold in general, even with~$S \subseteq R$, for irreducible \emph{infinite} Coxeter groups, see~\cite{Mue2000}.
\end{remark}

\begin{lemma}
\label{lem:abstractisomorphism}
  Let~$(W,S)$ and $(W',S')$ be two irreducible generalized Coxeter systems such that~$W$ and~$W'$ are finite and isomorphic as abstract groups.
  Then $(W,S)$ and $(W',S')$ are isomorphic generalized Coxeter systems, i.e., they have isomorphic Coxeter graphs.
\end{lemma}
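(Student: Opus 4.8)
The statement is a classification fact, and I would prove it by showing that the abstract isomorphism type of a finite irreducible generalized Coxeter group determines its Coxeter graph, verifying this over the full list of such groups. The organizing idea is to pass through the reflection representation. Recall from Section~\ref{ssec:shephard} that a finite irreducible generalized Coxeter system $(W,S)$ produces a faithful irreducible reflection representation $\rho\colon W\to\GL(V)$ with $\dim V=|S|$, whose image is either a Shephard group or a (complexified) irreducible real reflection group, and that the Coxeter graph $\Gamma$ of $(W,S)$ determines $\rho$ up to isomorphism. Thus the degrees $d_1,\dots,d_n$ and the codegrees of $\rho(W)$, as well as the primitive/imprimitive dichotomy, are read off combinatorially from $\Gamma$; conversely, by Proposition~\ref{prop:observation} this data pins down the Shephard--Todd type, hence $\rho(W)$ up to $\GL_n(\CC)$-conjugacy, and for each such type the standard graph is known to be unique up to graph isomorphism. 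So it suffices to show that the \emph{abstract} group $W$ determines these invariants across the classified list.

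To do this I would attach to each admissible graph $\Gamma$ a short list of genuinely abstract invariants computable from $\Gamma$ itself, and check that they separate non-isomorphic graphs. The useful ones are: (i) the order $|W|=\prod_i d_i$, where the $d_i$ are obtained from $\Gamma$ by the explicit degree formulas available because the relevant diagrams are trees (real case) or linear (Shephard case); (ii) the abelianization $W^{\mathrm{ab}}$, a product of cyclic groups extracted from the presentation in Definition~\ref{def:gencox} by identifying generators linked through odd-labelled edges and imposing $s^{p_s}=1$; and (iii) the center $Z(W)$, which for these groups is cyclic of a known order. These invariants are intrinsic to $W$, so an abstract isomorphism $W\cong W'$ forces equality of $|W|,\ W^{\mathrm{ab}},\ Z(W)$ for the two graphs.

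The final step is the case check. The list to run through is: all finite irreducible real reflection groups, the family $G(r,1,n)$ with $r\geq 3$, the dihedral series $I_2(m)$, and the $15$ exceptional non-real Shephard groups. For the real types this reduces to the classical determination of finite Coxeter groups up to abstract isomorphism, whose only accidental isomorphisms occur in low rank and among dihedral groups and are disposed of directly. For Shephard groups and for the mixed real/Shephard coincidences, I would compare $|W|$ and $W^{\mathrm{ab}}$ to eliminate spurious matches, reducing everything to finitely many explicit comparisons together with the two infinite series, which are then finished by direct computation of the invariants. The main obstacle is that there is no uniform argument: one must rule out \emph{every} abstract isomorphism between groups carrying non-isomorphic graphs, and the genuinely delicate points are the low-rank accidents and the Shephard-group coincidences. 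This is precisely the case-by-case phenomenon the authors flag for the related Proposition~\ref{prop:cox}, and it is unavoidable here as well.
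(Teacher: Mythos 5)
Your proposal is correct and follows essentially the same route as the paper: the paper likewise reduces the real case to the classical fact that finite irreducible Coxeter systems are determined up to graph isomorphism by their abstract group (citing~\cite[Appendix~A1]{BB2005}) and disposes of the remaining generalized Coxeter systems by a case-by-case check against Koster's classification~\cite{Kos1975}. Your contribution of explicit separating invariants ($|W|$, $W^{\mathrm{ab}}$, $Z(W)$) just makes concrete the verification that the paper leaves to the cited classifications.
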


\begin{proof}
  It is well known that this property holds for finite irreducible Coxeter systems, see for example~\cite[Appendix~A1]{BB2005}.
  Using the classification given in~\cite{Kos1975}, it can as well be checked for finite irreducible generalized Coxeter groups\footnote{J.~Michel informed us that the following more general statement can be deduced from considerations in~\cite{MM2010}: except for straightforward coincidences, any two irreducible complex reflection groups of different Shephard-Todd types are non-isomorphic as abstract groups.}.
\end{proof}

\begin{proof}[Proof of Proposition~\ref{prop:cox}]
  Let  $W\subseteq \GL(V)$ be a reflection group which admits a generalized Coxeter system~$(W,S)$ such that~$S$ consists of reflections.
  
  First suppose that $(W,S)$ is not irreducible.
  The generating set~$S$ has a nontrivial partition $S_1\sqcup S_2$, where the reflections in $S_1$ commute with the ones in $S_2$.
  The group~$W$ then preserves the intersection of the hyperplanes associated to the reflections in~$S_1$ (which is a proper subspace) and thus,~$W$ acts reducibly on~$V$.
  
  Conversely, suppose now that~$W$ is reducible as a representation, i.e., $W\simeq W_1\times W_2$, $V=V_1\oplus V_2$, with $W_i\hookrightarrow \GL(V_i)$. 
  Then any element $s\in S$ preserves $V_1$ and $V_2$; but since~$s$ is a reflection, it then must act trivially on $V_1$ and as a reflection on $V_2$, or vice versa. 
  This yields a nontrivial partition $S=S_1\sqcup S_2$, with $\left< S_i \right> = W_i$, so that $(W,S)$ is a reducible generalized Coxeter system.
  
  In total, we obtain that the irreducible components of the representation~$W$ correspond to the irreducible components of the Coxeter system $(W,S)$. 
  We can therefore restrict the situation to the case where~$W$ is an irreducible reflection group, with two irreducible generalized Coxeter structures $(W,S)$ and $(W,S')$.
  This case was treated in Lemma~\ref{lem:abstractisomorphism}.
\end{proof}

We can now complete the proof of Theorem~\ref{thm:main}, by showing that the characterization~\eqref{def5} is equivalent to the others.

\begin{proof}[Proof of Theorem~\ref{thm:main}\eqref{def5}]
  In this proof we fix a standard generalized Coxeter system~$(W,S_0)$ for~$W$.
  From Proposition~\ref{prop:standardpresreg},~$S_0$ is a regular generating set.
  We write~$S=\{s_1,\dots, s_n\}$.
  From Proposition~\ref{prop:pres}, we can use characterization~\eqref{def6} of Theorem~\ref{thm:main}, which actually does not depend on the choice of the regular generating set.

  We first prove the implication~\eqref{def1}$\Rightarrow$\eqref{def5}.
  Suppose~$c$ is a Coxeter element.
  We aim to show that there is a set~$S$ of generalized Coxeter generators for~$W$ such that~$c$ is the product of the generators in~$S$.
  Let~$w = s_1 \cdots s_n$. 
  Since~$S_0$ is a regular generating set,~$w$ is regular of order~$h$.
  By Proposition~\ref{prop:refl-autom}, there is a reflection automorphism~$\psi$ of~$W$ sending~$w$ to~$c$.
  Next, set~$S = \psi(S_0) = \{ \psi(s_1),\ldots,\psi(s_n) \}$, so that~$c$ is the product of the elements in~$S$. 
  Since~$\psi$ is an automorphism preserving reflections, $(W,S)$ is again a generalized Coxeter system.

  To conclude, we prove \eqref{def5}$\Rightarrow$\eqref{def6}. 
  Let $S=\{t_1,\dots, t_n\}\subseteq R$ be such that $(W,S)$ is a generalized Coxeter system, and   $c:=t_1\cdots t_n$.
  It follows from Proposition~\ref{prop:cox} that $(W,S)$ and $(W,S_0)$ are isomorphic Coxeter systems. 
  So the generating set~$S$ is isomorphic to $S_0$, and~$c$ satisfies characterization~\eqref{def6}.
\end{proof}

\section{Regular elements and reflection automorphisms}
\label{sec:reg}

In this section we prove Theorem~\ref{thm:refl-autom-reg} on the action of reflection automorphisms on regular elements of a given fixed order. 
Statements~\eqref{it:trans-reg}-\eqref{it:divides} are all direct consequences of the first statement~\eqref{it:trans-reg} which is recalled in the following proposition.
\begin{proposition}
  \label{prop:refl-autom-reg}
  Let~$W$ be an irreducible complex reflection group, and~$d$ be a regular number for~$W$. 
  The action of~$\Aut_R(W)$ on~$W$ preserves the set of regular elements of order~$d$, and is transitive on it.
\end{proposition}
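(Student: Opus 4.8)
The plan is to mimic, essentially verbatim, the proof of Proposition~\ref{prop:refl-autom} given in Sections~\ref{ssec:image} and~\ref{ssec:trans}, replacing the Coxeter number~$h$ by an arbitrary regular number~$d$ and tracking where the argument used that $h=d_n$ (my expectation is that it never genuinely did). The two halves to establish are: first, that $\Aut_R(W)$ \emph{preserves} the set of regular elements of order~$d$ (well-definedness of the action on this set), and second, that it acts \emph{transitively} on it.

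For well-definedness, I would take a regular element~$c$ of order~$d$ and a reflection automorphism~$\psi$, and show $\psi(c)$ is again regular of order~$d$. By Proposition~\ref{prop:marinmichel},~$\psi$ is a Galois automorphism attached to some $\gamma\in\Gamma_W$, so $\psi(c)$ is $\GL_n(\CC)$-conjugate to $\bar{\gamma}(c)$; it thus suffices to check $\bar{\gamma}(c)$ is regular of order~$d$. Let~$\zeta$ be a $\zeta$-regular eigenvalue for~$c$ (a primitive $d$-th root of unity). All eigenvalues of~$c$ lie in $\QQ(\zeta)$ by Theorem~\ref{Springer-theory-facts}\eqref{eq:springer2}, and by Corollary~\ref{Galois-corollary}\eqref{it:extension1} I can extend~$\gamma$ to $\delta\in\Gal(K_W(\zeta)/\QQ)$. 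Then $\delta(\zeta)=\zeta^p$ for some~$p$ coprime to~$d$, hence still a primitive $d$-th root of unity, and since the eigenvalues of $\bar{\gamma}(c)$ are the~$\delta$-images of those of~$c$, the element $\bar{\gamma}(c)$ has~$\zeta^p$ as an eigenvalue. The one genuinely new point here is that eigenvalue order must translate into \emph{regularity}: for Coxeter elements the equivalence \eqref{def1}$\Leftrightarrow$\eqref{def2} of Theorem~\ref{thm:main} let us conclude from an eigenvalue of order~$h$ alone, but that equivalence is special to $d=h$. Instead I would argue that Galois conjugation preserves regularity directly: if~$v$ is a $\zeta$-regular eigenvector for~$c$, meaning~$v$ avoids all reflecting hyperplanes, then applying~$\bar{\gamma}$ (which permutes the reflecting hyperplanes of~$\bar\gamma(W)$, conjugate to those of~$W$) sends~$v$ to a regular eigenvector of $\bar\gamma(c)$ for the eigenvalue $\zeta^p$, so $\bar\gamma(c)$ is $\zeta^p$-regular of order~$d$.

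For transitivity, I take two regular elements~$c$,~$c'$ of order~$d$ with regular eigenvalues~$\zeta$ and $\zeta'=\zeta^p$ respectively ($p$ coprime to~$d$), set $K=K_W(\zeta)$, and use Corollary~\ref{Galois-corollary}\eqref{it:extension2} to obtain $\delta\in\Gal(K/\QQ)$ restricting on $\QQ(\zeta)$ to $\zeta\mapsto\zeta^p$. By Corollary~\ref{cor:galoisconj} the group $\bar\delta(W)$ equals $gWg^{-1}$ for some $g\in\GL_n(\CC)$. Both~$c'$ and $g^{-1}\bar\delta(c)g$ are $\zeta'$-regular elements of~$W$, so Springer's Theorem~\ref{Springer-theory-facts}\eqref{eq:springer1} makes them $W$-conjugate, say $c'=w^{-1}g^{-1}\bar\delta(c)gw$; the map $\psi\colon u\mapsto w^{-1}g^{-1}\bar\delta(u)gw$ is then a reflection automorphism of~$W$ with $\psi(c)=c'$, exactly as in Section~\ref{ssec:trans}.

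The main obstacle, and the only place the generalization requires care rather than copying, is the first step: for Coxeter elements the passage from ``$\psi(c)$ has an eigenvalue of order~$h$'' to ``$\psi(c)$ is regular of order~$h$'' was outsourced to the hard equivalence \eqref{def1}$\Leftrightarrow$\eqref{def2} of Theorem~\ref{thm:main}, which has no analogue for general~$d$. I therefore expect to need the more hands-on verification above that Galois (and hence reflection) automorphisms carry regular vectors to regular vectors, so that regularity itself---not merely eigenvalue order---is transported. Once regularity is preserved, Springer's Theorem handles uniqueness up to conjugacy and the transitivity argument goes through unchanged. The deduction of statements \eqref{it:galois-trans}--\eqref{it:divides} from \eqref{it:trans-reg} then parallels Section~\ref{sec:galois}: Proposition~\ref{prop:marinmichel} upgrades transitivity by reflection automorphisms to transitivity of the $\Gamma_W$-action, Lemma~\ref{lem:nccox} (with~$h$ replaced by~$d$) gives $|\Cox_d(W)|=\varphi(d)/\varphi_W(d)$, and transitivity of a $\Gamma_W$-action of size $[K_W:\QQ]$ forces $\varphi(d)/\varphi_W(d)$ to divide $[K_W:\QQ]$, with equality precisely when the action is free.
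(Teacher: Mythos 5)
Your proposal is correct and follows essentially the same route as the paper: the paper likewise writes a reflection automorphism as a Galois automorphism composed with conjugation (via Proposition~\ref{prop:marinmichel}), extends~$\gamma$ to~$\delta$ on~$K_W(\zeta)$, and transports the regular eigenvector~$v$ to $a\bar{\delta}(v)$, checking it avoids all reflecting hyperplanes because $\psi(R)=R$, and then reuses this regularity-preservation in the transitivity step before invoking Springer's conjugacy theorem. In particular, the one point you flagged as genuinely new --- that for general~$d$ one cannot fall back on the eigenvalue-order characterization \eqref{def2}$\Rightarrow$\eqref{def1} and must instead verify that a regular eigenvector is carried to a regular eigenvector --- is exactly the caveat with which the paper opens Section~\ref{sec:reg}.
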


Most of the proof follows the same lines as for Coxeter elements in Section~\ref{sec:refl-autom}, simply by replacing~$h$ by~$d$.
However, it is not true in general that an element of order~$d$ is regular if and only if it has an eigenvalue of order~$d$ (characterization~\eqref{def2} in Theorem~\ref{thm:main} for Coxeter elements), so we need to check the existence of a regular eigenvector.

\subsection{Images of regular elements}
\label{ssec:image-reg}

We first prove that the image of a regular element by a reflection automorphism is regular.
We assume again that~$W$ is embedded in~$\GL_n(K_W)$.
Let~${w\in W}$ be a regular element of order~$d$, and~$\psi$ be a reflection automorphism of~$W$.
From the discussion in Section~\ref{sec:intro} and the use of Proposition~\ref{prop:marinmichel}, we know that~$\psi$ is of the form
\[
\begin{array}{llcl}
\psi: & W & \to & W\\
& x & \mapsto & a \bar{\gamma}(x)a^{-1}
\end{array}
\]
where~$\gamma$ is a field automorphism in~$\Gamma_W=\Gal(K_W/\QQ)$, and with~$\bar{\gamma}$ the associated automorphism of~$\GL_n(K_W)$, and some~$a \in \GL_n(\CC)$.

Let~$\zeta$ be a root of unity of order~$d$ such that~$w$ is~$\zeta$-regular, and let~$v$ be a regular eigenvector of~$w$ for~$\zeta$.
We can (and do) assume that the coordinates of~$v$ are in~$K:=K_W(\zeta)$.
We are going to construct from~$v$ a regular eigenvector for~$\psi(w)$.

Using Corollary~\ref{Galois-corollary}\eqref{it:extension1}, we extend~$\gamma$ to a field automorphism~$\delta$ of~$K$.
Note that~$\delta(\zeta)$ has the same minimal polynomial as~$\zeta$, so~$\delta(\zeta)=\zeta^p$ for some~$p$ coprime to~$d$.
Applying~$\delta$ to the coordinates of~$v$, we get a new vector in~$K^n$ that we denote by~$\bar{\delta}(v)$.
Applying the Galois action on the equation~$w(v)=\zeta v$, we get the equality~$\bar{\gamma}(w)(\bar{\delta}(v))=\zeta^p \bar{\delta}(v)$.
Then, denoting the vector~$a\bar{\delta}(v)$ by~$v'$, we obtain
\[
  \psi(w)(v')=a \bar{\gamma}(w)a^{-1} (a\bar{\delta}(v))= \zeta^p v'.
\]
To conclude that~$\psi(w)$ is regular of order~$d$, it remains to check that~$v'$ is a regular vector.
Note that~$v'$ is regular if and only if
\[ \forall r\in R, \ r(v')\neq v'.\]
Applying the Galois action and the action of~$a$ on an equation of the form~$r(u)=u$ (for some vector~$u\in K^n$ and some reflection~$r\in R$) gives us the equivalences
\[
    r(u)=u \quad \Leftrightarrow \quad \bar{\gamma}(r)(\bar{\delta}(u))=\bar{\delta}(u)
           \quad \Leftrightarrow \quad \psi(r)(a\bar{\delta}(u))=a\bar{\delta}(u).
\]
Thus, since~$\psi(R)=R$, the regularity of~$v$ implies the regularity of~$v'$.

\subsection{Transitivity of reflection automorphisms}
\label{ssec:trans-reg}

To conclude the proof of Proposition~\ref{prop:refl-autom-reg}, we need to prove that the action of~$\Aut_R(W)$ on the set of regular elements of order~$d$ is transitive.
Let~$w \in W$ be~$\zeta$-regular with~$\zeta$ being a primitive~$d$-th root of unity, and let~$w'$ be a~$\zeta^p$-regular element with~$p$ coprime to~$d$.
The construction of a reflection automorphism mapping~$w$ to~$w'$ is the same as in the case~$d=h$ in Section~\ref{ssec:trans}.
We start by extending the field automorphism defined by~$\zeta\mapsto \zeta^p$ to an automorphism~$\delta$ of the field~$K_W(\zeta)$, using Corollary~\ref{Galois-corollary}\eqref{it:extension2}.
We obtain a group~$\bar{\delta}(W)$, that is (by Corollary~\ref{cor:galoisconj}) conjugate to~$W$ in~$\GL_n(\CC)$, say~$\bar{\delta}(W)=gWg^{-1}$.
We can then define a reflection automorphism~$\psi$ of~$W$ by~$x \mapsto \psi(x)=g^{-1}\bar{\delta}(x)g$. 
Now, the same arguments as in Section~\ref{ssec:image-reg} show that~$\psi(w)$ is regular for the eigenvalue~$\delta(\zeta)=\zeta^p$.
By Springer's Theorem~\ref{Springer-theory-facts}\eqref{eq:springer1},~$\psi(w)$ is then conjugate to~$w'$ in~$W$, say~$w' = a^{-1}\psi(w)a$ for some~$a$ in~$W$.
Then the map~$x \mapsto  a^{-1}\psi(x)a$ yields a reflection automorphism of~$W$ mapping~$w$ to~$w'$.

\subsection{Galois action on conjugacy classes of regular elements}

The proof of the remaining statements in Theorem~\ref{thm:refl-autom-reg} is exactly the same as for Coxeter elements. 
Recall that we defined in Section~\ref{sec:intro} (before Proposition~\ref{prop:marinmichel}) a natural action of~$\Gamma_W=\Gal(K_W/\QQ)$ on the set of~$W$-conjugacy classes of regular elements. 
This yields an action of~$\Gamma_W$ on the set~$\Cox_d(W)$ of regular elements of order~$d$. 
Since any reflection automorphism is a Galois automorphism (Proposition~\ref{prop:marinmichel}), we get from Proposition~\ref{prop:refl-autom-reg} that this action is transitive, proving statement~\eqref{it:galois-trans} of Theorem~\ref{thm:refl-autom-reg}.

\medskip

The equality~$|\Cox_d(W)| = \varphi(d)/\varphi_W(d)$ holds since the same proof as for Lemma~\ref{lem:nccox} applies, by simply replacing~$h$ with~$d$.

The last statement~\eqref{it:divides} then follows directly, given that one has a transitive action of a finite group on a finite set.


\section*{Acknowledgements}
The authors thank Jean Michel for many helpful discussions during the preparation of this work, as well as Alex Miller, both for helpful conversations and for pointing them to the results in Koster's thesis~\cite{Kos1975}.
They also thank Vincent Beck, David Bessis, Christian Krattenthaler and Ivan Marin for enlightening discussions on this subject.
Finally, they are grateful to Gunter Malle for useful comments on a previous version of this article.


\bibliographystyle{alpha}
\bibliography{CoxeterElementNote.bib}

\end{document}